\documentclass[12pt]{amsart}
\usepackage{graphicx,amscd,color,amsmath,amsfonts,amssymb}
\usepackage{geometry}
\newtheorem{theorem}{Theorem}[section]
\newtheorem{remark}[theorem]{Remark}
\newtheorem{lemma}[theorem]{Lemma}

\newtheorem{proposition}[theorem]{Proposition}
\newtheorem{corollary}[theorem]{Corollary}

\newtheorem{conjecture}[theorem]{Conjecture}

\newtheorem{problem}[theorem]{Problem}

\numberwithin{equation}{section}
\input epsf
\begin{document}
\title[Bohnenblust--Hille constants]{There exist multilinear Bohnenblust--Hille constants $\left(  C_{n}\right)
_{n=1}^{\infty}$ with $\displaystyle \lim_{n\rightarrow \infty}\left(  C_{n+1}-C_{n}\right)  =0$}
\author[D. Nu\~{n}ez et al.]{D. Nu\~{n}ez-Alarc\'{o}n\textsuperscript{*}, D. Pellegrino\textsuperscript{**} \and J.B. Seoane-Sep\'{u}lveda\textsuperscript{***} \and D. M. Serrano-Rodr\'{\i}guez\textsuperscript{*}}
\address{Departamento de Matem\'{a}tica,\newline\indent Universidade Federal da Para\'{\i}ba,\newline\indent 58.051-900 - Jo\~{a}o Pessoa, Brazil.}
\email{danielnunezal@gmail.com}
\thanks{\textsuperscript{*}Supported by Capes.}
\address{Departamento de Matem\'{a}tica,\newline\indent Universidade Federal da Para\'{\i}ba,\newline\indent 58.051-900 - Jo\~{a}o Pessoa, Brazil.}
\email{dmpellegrino@gmail.com and pellegrino@pq.cnpq.br}
\thanks{\textsuperscript{**}Supported by CNPq Grant 301237/2009-3.}
\address{Departamento de An\'{a}lisis Matem\'{a}tico,\newline\indent Facultad de Ciencias Matem\'{a}ticas, \newline\indent Plaza de Ciencias 3, \newline\indent Universidad Complutense de Madrid,\newline\indent Madrid, 28040, Spain.}
\email{jseoane@mat.ucm.es}
\thanks{\textsuperscript{***}Supported by grant MTM2009-07848.}
\address{Departamento de Matem\'{a}tica,\newline\indent Universidade Federal da Para\'{\i}ba,\newline\indent 58.051-900 - Jo\~{a}o Pessoa, Brazil.}
\email{dmserrano0@gmail.com}
\begin{abstract}
The $n$-linear Bohnenblust--Hille inequality asserts that there is a constant $C_{n}\in\lbrack1,\infty)$ such that the $\ell_{\frac{2n}{n+1}}$-norm of $\left(  U(e_{i_{^{1}}},\ldots,e_{i_{n}})\right)  _{i_{1},\ldots, i_{n}=1}^{N}$ is bounded above by $C_{n}$ times the supremum norm of $U,$ regardless of the $n$-linear form $U:\mathbb{C}^{N}\times\cdots\times\mathbb{C}^{N} \rightarrow\mathbb{C}$ and the positive integer $N$ (the same holds for real scalars). The power $2n/(n+1)$ is sharp but the values and asymptotic behavior of the optimal constants remain a mystery. The first estimates for these constants had exponential growth. Very recently, a new panorama emerged and the importance, for many applications, of the knowledge of the optimal constants (denoted by $\left(  K_{n}\right)  _{n=1}^{\infty}$) was stressed. The title of this paper is part of our \emph{Fundamental Lemma}, one of the novelties presented here. It brings new (and precise) information on the optimal constants (for both real and complex scalars). For instance,
\[
K_{n+1}-K_{n}<\frac{0.87}{n^{0.473}}
\]
for infinitely many $n$'s. In the case of complex scalars we present a curious formula, where $\pi,e$ and the famous Euler--Mascheroni constant $\gamma$ appear together:
\[
K_{n}<1+\left(  \frac{4}{\sqrt{\pi}}\left(  1-e^{\gamma/2-1/2}\right){\sum\limits_{j=1}^{n-1}}j^{^{\log_{2}\left(  e^{-\gamma/2+1/2}\right)  -1}
}\right)
\]
for all $n\geq2$. Numerically, the above formula shows a surprising low growth,
\[
K_{n}<1.41\left(  n-1\right)^{0.305}-0.04
\]
for every integer $n\geq2$. We also provide a brief discussion on the interplay between the Kahane--Salem--Zygmund and the Bohnenblust--Hille (polynomial and multilinear) inequalities. We shall adapt some of the techniques presented here to estimate the constants satisfying Bohnenblust--Hille type inequalities when the exponent $\frac{2n}{n+1}$ is replaced by any $q\in\left[\frac{2n}{n+1},\infty\right)$.
\end{abstract}
\subjclass[2010]{46G25, 30B50}
\keywords{Bohnenblust--Hille inequality, Kahane--Salem--Zygmund inequality, Quantum Information Theory}
\maketitle
\section{Introduction}
The polynomial and multilinear Bohnenblust--Hille inequalities have important applications in different fields of Mathematics and Physics, such as Operator Theory, Fourier and Harmonic Analysis, Complex Analysis, Analytic Number Theory and Quantum Information Theory (see \cite{defant, diniz2} and references therein). Since its proof, in the \emph{Annals of Mathematics} in
1931, the (multilinear and polynomial) Bohnenblust--Hille inequalities were overlooked for decades (see \cite{bh}) and only returned to the spotlights in the last few years with works of A. Defant, L. Frerick, J. Ortega-Cerd\'{a}, M. Ouna\"{\i}es, D. Popa, U. Schwarting, K. Seip, among others. The polynomial Bohnenblust--Hille inequality proves the existence of a positive function
$C:\mathbb{N}\rightarrow\lbrack1,\infty)$ such that for every $m$-homogeneous polynomial $P$ on $\mathbb{C}^{N}$, the $\ell_{\frac{2m}{m+1}}$-norm of the set of coefficients of $P$ is bounded above by $C(m)$ times the supremum norm of $P$ on the unit polydisc. The original estimates for $C(m)$ had a growth of order $m^{m/2}$ and only in 2011 (\cite{annals}) the importance of this inequality was rediscovered and the estimates for $C(m)$ were substantially improved; in the aforementioned paper it is proved that $C(m)$ can be chosen to be hypercontractive and, more precisely,
\begin{equation}
C(m)\leq\left(  1+\frac{1}{m}\right)  ^{m-1}\sqrt{m}\left(  \sqrt{2}\right)^{m-1}. \label{defant55}%
\end{equation}
This result, besides its mathematical importance, has striking applications in different contexts (see \cite{annals}). The multilinear version of the Bohnenblust--Hille inequality has a similar, \textit{mutatis mutandis,} formulation:
\medskip
\noindent\textbf{Multilinear Bohnenblust--Hille inequality.} For every
positive integer $m\geq1$ there exists a sequence of positive scalars $\left(
C_{m}\right)  _{m=1}^{\infty}$ in $[1,\infty)$ such that
\[
\left(  \sum\limits_{i_{1},\ldots,i_{m}=1}^{N}\left\vert U(e_{i_{^{1}}}
,\ldots,e_{i_{m}})\right\vert ^{\frac{2m}{m+1}}\right)  ^{\frac{m+1}{2m}}\leq
C_{m}\sup_{z_{1},\ldots,z_{m}\in\mathbb{D}^{N}}\left\vert U(z_{1},\ldots,z_{m}
)\right\vert
\]
for all $m$-linear forms $U:\mathbb{C}^{N}\times\cdots\times\mathbb{C}
^{N}\rightarrow\mathbb{C}$ and every positive integer $N$, where $\left(
e_{i}\right)  _{i=1}^{N}$ denotes the canonical basis of $\mathbb{C}^{N}$ and
$\mathbb{D}^{N}$ represents the open unit polydisk in $\mathbb{C}^{N}$.
\medskip
The case $m=2$ is the well-known Littlewood's $4/3$ theorem (see
\cite{defant2, Garling, Litt}). The original purpose of Littlewood's $4/3$
theorem was to solve a problem of P.J. Daniell on functions of bounded
variation (see \cite{Litt}); on the other hand, the Bohnenblust--Hille
inequality was invented to solve Bohr's famous absolute convergence problem
within the theory of Dirichlet series (this subject is being recently explored
by several authors; see \cite{Blas, bom, dgm, dgm2, dmp, defant3, defant22}
and references therein). Some independent results were proven in the 1970's
where better upper bounds for $C_{m}$ were obtained, but it seems that the
authors were not aware of the existence of the original results by Bohnenblust
and Hille.

The oblivion of the work of Bohnenblust and Hille in the past was so
noticeable that Blei's book \cite{Blei} (2001) states the Bohnenblust--Hille
inequality as \textquotedblleft the Littlewood's $2n/(n+1) $%
-inequality\textquotedblright\ and absolutely no mention to the paper of
Bohnenblust and Hille is made at all. According to Blei's book the
\textquotedblleft Littlewood's $2n/(n+1)$-inequality\textquotedblright\ is
originally due to A.M. Davie (\cite{d}, 1973) and (independently) to G.
Johnson and W. Woodward (\cite{JW}, 1974) but as a matter of fact Bohnenblust
and Hille's paper preceded the aforementioned works in more than 40 years.

The present paper is divided into eleven short Sections, and an Appendix, as follows:

In Section 2 we describe the main advances and uncertainties related to the search of the sharp constants in the multilinear Bohnenblust--Hille inequality; in Section 3 we describe the state-of-the-art of the subject; in Section 4 we introduce some notation and announce the key result of this paper, called \textit{Fundamental Lemma}; in Section 5 we present our main results; Sections 6,7 and 8 are focused on the proof of technical lemmata, the Fundamental Lemma and the main results; in Section 9 we sketch the same results in the case of complex scalars.

Section 10 is divided in three subsections devoted to the interplay between the Kahane--Salem--Zygmund inequality and the Bohnenblust--Hille inequalities. The first subsection contains a quite straightforward proof of the optimality of the power $2m/(m+1)$ in the polynomial and multilinear Bohnenblust--Hille inequalities. This result is well-known but, to the best of the authors
knowledge, there is no simple proof of this fact in the literature. According to Defant \textit{et al (\cite[page 486]{annals})}, Bohnenblust and Hille \textquotedblleft\emph{showed, through a highly nontrivial argument, that the exponent $\frac{2m}{m+1}$ cannot be improved}\textquotedblright\ or according to Defant and Schwarting \cite[page 90]{DS}, Bohnenblust and Hille showed
\textquotedblleft\emph{with a sophisticated argument that the exponent $\frac{2m}{m+1}$ is optimal}\textquotedblright. Our argument shows that the optimality of the exponent $\frac{2m}{m+1}$ is a straightforward corollary of the Kahane--Salem--Zygmund inequality; in fact, as it shall be clear in the text, we prove a formally stronger result. In the second subsection we sketch
some ideas that may be useful in the investigation of lower bounds for the optimal constants of the complex polynomial Bohnenblust--Hille inequality; finally, in the third subsection we show how the Bohnenblust--Hille inequality can be used to prove the optimality of the power $\frac{m+1}{2}$ in the Kahane--Salem--Zygmund inequality (including the case of real scalars). Section 11 deals with some open problems and directions. In a final {\em Appendix}, we adapt some of the techniques used along this paper to a wide range of parameters. More precisely, we can estimate the constants satisfying Bohnenblust--Hille type inequalities when the exponent $\frac{2n}{n+1}$ is replaced by any $q\in\left[\frac{2n}{n+1},\infty\right)$.

\section{The search for the optimal multilinear Bohnenblust--Hille constants}

A series of very recent works (see \cite{annals, jfa, diniz2, Mu, munn, ap,
ap2, jmaa, Diana}) have investigated estimates for $C_{n}$ for the polynomial
and multilinear cases. The first estimates for the constants $C_{n}$ indicate
that one should expect an exponential growth for the optimal constants
$\left(  K_{n}\right)  _{n=1}^{\infty}$ satisfying the multilinear
Bohnenblust--Hille inequality:

\medskip

\begin{itemize}
\item $K_{n}\leq n^{\frac{n+1}{2n}}2^{\frac{n-1}{2}}$ \ (\cite{bh}, 1931),

\item $K_{n}\leq2^{\frac{n-1}{2}}$ \ (\cite{d, Ka}, 1970's),

\item $K_{n}\leq\left(  \frac{2}{\sqrt{\pi}}\right)  ^{n-1}$ \ (\cite{Q}, 1995).
\end{itemize}

\medskip

It is worth mentioning that the Bohnenblust--Hille inequality also holds for
the case of real scalars. In this paper, for the sake of simplicity, we shall
first work with real scalars. As a matter of fact, since the upper estimates
\eqref{orig} also hold for the complex case (because these estimates are
clearly bigger than the best known estimates for the complex case (see
\cite{jmaa})) our whole procedure encompasses both the real and complex cases.
For the sake of completeness (and since the complex case is the most important
for applications) in Section 9 we present separate estimates for the complex case.

Up to now the optimal values of these constants are unknown (for details see
\cite[Remark i, page 178]{Blei} or \cite{jfa, jmaa} and references therein).
Only very recently (see \cite{Mu}) quite surprising results were proved and
new connections with different subjects have arisen:

\begin{itemize}
\item[(i)] the sequence $\left(  K_{n}\right)  _{n=1}^{\infty}$ has a
subexponential growth (\cite{Mu}),

\item[(ii)] the sequence $\left(  K_{n}\right)  _{n=1}^{\infty}$ does not have
a polynomial expression (\cite{ap}),

\item[(iii)] if $q>0.526322$ (real case) or $q>0.304975$ (complex case), then
$K_{n}\nsim n^{q}$ (\cite{ap, ap2}).

\item[(iv)] the exact growth of $K_{n}$ is related to a conjecture of Aaronson
and Ambainis \cite{AA} about classical simulations of quantum query algorithms
(\cite{diniz2}).
\end{itemize}

Notwithstanding the recent advances a lot of mystery remains on the estimates
of the optimal constants satisfying the multilinear (and polynomial)
Bohnenblust--Hille inequality. Even simple questions remain without solution:

\begin{itemize}
\item \textbf{Problem 1.} Is $\left(  K_{n}\right)  _{n=1}^{\infty}$ increasing?

\item \textbf{Problem 2.} Does $\left(  K_{n}\right)  _{n=1}^{\infty}$ have a
\textquotedblleft well behaved\textquotedblright\ growth?
\end{itemize}

These two questions (mainly Problem 2), whose answers are quite likely
positive (but unfortunately unknown), are crucial barriers for the achievement
of stronger results on the behavior of the optimal constants. For example the
possibility of strong fluctuations on the optimal constants seems to be a
barrier to directly conclude (from (ii) above) that the optimal constants have
a subpolynomial growth. The problems above are well-characterized by the
Dichotomy Theorem (recently obtained in \cite{ap}).

\subsection{The Dichotomy Theorem}

In \cite{ap}, which can be considered a continuation of \cite{Mu}, a
\emph{dichotomy theorem} for the candidates of constants satisfying the
multilinear Bohnenblust--Hille inequality is proved and, as a consequence,
provides some new information on the optimal constants. In \cite{ap} a
sequence of positive real numbers $\left(  R_{n}\right)  _{n=1}^{\infty}$ is
said to be \textit{well-behaved} if there are $L_{1},L_{2}\in\lbrack0,\infty]
$ such that%
\[
\lim_{n\rightarrow\infty}\frac{R_{2n}}{R_{n}}=L_{1}\text{ }%
\]
and%
\[
\lim_{n\rightarrow\infty}\left(  R_{n+1}-R_{n}\right)  =L_{2}.
\]

The following result from \cite{ap}, in our opinion, is a good description of
the main obstacles that appear in the search of the optimal constants:

\begin{theorem}
[Dichotomy Theorem \cite{ap}]The sequence of optimal constants $\left(
K_{n}\right)  _{n=1}^{\infty}$ satisfying the Bohnenblust--Hille inequality
satisfies one and only one of the following assertions:

\begin{itemize}
\item[(i)] It is subexponential and not well-behaved.

\item[(ii)] It is well-behaved with
\[
\lim_{n\rightarrow\infty}\frac{K_{2n}}{K_{n}}\in\left[  1,\frac{e^{1-\frac
{1}{2}\gamma}}{\sqrt{2}}\right]
\]
and
\[
\lim_{n\rightarrow\infty}\left(  K_{n+1}-K_{n}\right)  =0,
\]
where $\gamma$ denotes the Euler--Mascheroni constant 
$$\displaystyle \gamma=\lim_{m\rightarrow\infty}\left(  \left(  -\log m\right)  +\sum_{k=1}^{m}k^{-1}\right).$$
\end{itemize}
\end{theorem}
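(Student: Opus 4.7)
The plan is to split the argument into two essentially independent steps: a quantitative step that bounds $K_{2n}/K_n$ from above by $e^{1-\gamma/2}/\sqrt{2}$ using the best available explicit estimates for $K_n$, and a soft analytic step that, assuming well-behavedness, forces $L_{2}=0$ and pins down the range for $L_{1}$.

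First I would invoke the explicit subexponential upper bound for $K_n$ available in the literature (e.g.\ the product estimate of \cite{Mu}), which has the schematic form $K_n \leq \prod_{j=2}^n \alpha_j$ for explicit factors $\alpha_j$ built from values of $\Gamma(2-1/j)$. The key quantitative claim is then
\begin{equation*}
\limsup_{n\to\infty}\frac{K_{2n}}{K_n}\leq\frac{e^{1-\gamma/2}}{\sqrt{2}}.
\end{equation*}
Bounding $K_{2n}/K_n \leq \prod_{j=n+1}^{2n}\alpha_j$, taking logarithms, and expanding $\log\alpha_j$ as an asymptotic series in $1/j$, one reduces the estimate to controlling a tail of $\sum 1/j$. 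Combining $\sum_{j=n+1}^{2n} 1/j \to \log 2$ with the defining limit $\gamma = \lim_m\bigl(-\log m + \sum_{k=1}^m k^{-1}\bigr)$, the surviving constants collapse to $\tfrac{1}{2}(1-\gamma)-\tfrac{1}{2}\log 2$; exponentiating produces the stated bound.

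Next I would case-analyze on whether $(K_n)$ is well-behaved. If it is not, we are in (i) (subexponentiality being already in hand from \cite{Mu}). If it is, write $L_1 = \lim K_{2n}/K_n$ and $L_2 = \lim(K_{n+1}-K_n)$. The inequality $L_1 \geq 1$ is immediate from $K_n \geq 1$: were $L_1 < 1$, then $K_{2^k} \to 0$, a contradiction. The inequality $L_1 \leq e^{1-\gamma/2}/\sqrt{2}$ is the content of the previous step, so in particular $L_1 < 2$. It remains to rule out every $L_2 \neq 0$. If $L_2 \in (0,\infty)$, Stolz--Ces\`aro gives $K_n \sim L_2 n$, whence $K_{2n}/K_n \to 2$, contradicting $L_1 < 2$. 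If $L_2 = +\infty$, an averaging argument over dyadic blocks shows $\min_{m\in[2^k,2^{k+1})}(K_{m+1}-K_m) \leq (K_{2^{k+1}} - K_{2^k})/2^k$; but $K_{2^k}$ grows like $L_1^k$ with $L_1<2$, so the right-hand side tends to $0$, contradicting $K_{n+1}-K_n \to \infty$. If $L_2 < 0$ (finite or $-\infty$), then $K_n \to -\infty$, contradicting $K_n \geq 1$. Hence $L_2 = 0$.

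The main obstacle will be the sharp asymptotic in the first step: the specific constant $e^{1-\gamma/2}/\sqrt{2}$ emerges only after a delicate cancellation between the Stirling--Taylor expansion of the $\Gamma$-factors defining the $\alpha_j$ and the harmonic partial sums $\sum_{j=n+1}^{2n} 1/j$, reconciled through the definition of $\gamma$. Once that estimate is secured, the second step is routine Stolz--Ces\`aro bookkeeping together with the trivial lower bound $K_n \geq 1$.
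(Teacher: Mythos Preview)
A preliminary note: the Dichotomy Theorem is not proved in this paper; it is quoted from \cite{ap} as background. The paper does, however, prove one fragment of its conclusion independently (Theorem~\ref{56}: if $\lim_n(K_{n+1}-K_n)$ exists then it equals $0$), and by a route quite different from yours---it builds an explicit dominating sequence $(R_n)$ with $K_n\le R_n$ and $R_{n+1}-R_n\to 0$, and then shows that any strictly positive limit for $K_{n+1}-K_n$ would eventually force $K_N>R_N$.

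Your second step (the Stolz--Ces\`aro and dyadic-averaging arguments ruling out $L_2\neq 0$ once $L_1<2$ is secured) is sound. The genuine gap is in the first step. You invoke only the \emph{upper bound} $K_n\le C_n:=\prod_{j=2}^n\alpha_j$ and then assert ``$K_{2n}/K_n\le\prod_{j=n+1}^{2n}\alpha_j$''. That inference is invalid: from $K_{2n}\le C_{2n}$ and $K_n\le C_n$ one obtains no control whatsoever on the ratio $K_{2n}/K_n$, since $K_n$ could be far smaller than $C_n$. (An unconditional bound on $\limsup_n K_{2n}/K_n$ is not claimed anywhere in this paper.) What is actually needed is either (a) a \emph{recursive} inequality for the optimal constants themselves---the inductive proof in \cite{defant,jmaa} shows that any valid constant at level $m$ produces a valid constant at level $2m$ upon multiplication by $A_{2m/(m+2)}^{-m/2}$, whence $K_{2m}\le A_{2m/(m+2)}^{-m/2}K_m\le D\,K_m$ with $D=e^{1-\gamma/2}/\sqrt{2}$ by Lemma~\ref{lema1}---or (b) the indirect comparison available under well-behavedness: if $L_1>D$ then along powers of two $K_{2^k}\gtrsim(L_1-\varepsilon)^k$, which eventually overruns the upper bound $K_{2^k}\le C_{2^k}\lesssim D^k$. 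Either of these repairs your argument, but as written the key inequality on the ratio is a non sequitur, and everything downstream (in particular $L_1<2$, on which your elimination of $L_2>0$ and $L_2=+\infty$ rests) is left unsupported.
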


Having in mind the above result, our belief (and the common sense, we think)
is that the situation (ii) holds but, as a matter of fact, a proof of this
fact seems to be far from the actual state-of-the-art of the subject. One of
the main contributions of the present paper shows that
\[
K_{n+1}-K_{n}<\frac{0.87}{n^{0.473}}%
\]
for infinitely many values of $n\in\mathbb{N}$. The central tool for proving
the above estimate and related theorems is a result of independent interest
which uncovers part of the uncertainties related to the subject: there exists
a sequence $\left(  R_{n}\right)  _{n=1}^{\infty}$ satisfying the multilinear
Bohnenblust--Hille inequality such that
\begin{equation}
\lim_{n\rightarrow\infty}\left(  R_{n+1}-R_{n}\right)  =0. \label{aaw}%
\end{equation}

Although we do not solve Problems 1 and 2, our results shall allow us to
conclude, among other results, that the optimal multilinear Bohnenblust--Hille
constants do have a subpolynomial growth and, moreover, a sub $p$-harmonic
growth for $p\approx0.47$ in the real case and $p\approx0.69 $ in the complex
case (see Theorem \ref{tt55} and Section 9); the main contributions of this
paper shall be presented in Section 5.

\section{A chronological overview of recent results}

In view of the large amount of recent papers and preprints related to the
subject, we shall dedicate some space to locate the contribution of the
present paper in the current state-of-the-art of the subject.

\begin{itemize}
\item In (\cite{defant2}, 2009), the bilinear version of the
Bohnenblust--Hille inequality (known as Littlewood's $4/3$ theorem) is
explored in a new direction and this paper rediscovers the importance of the
Bohnenblust--Hille inequality.

\item The paper (\cite{defant}, 2011) is a remarkable work of A. Defant, D.
Popa and U. Schwarting providing a new proof of the Bohnenblust--Hille
inequality which also led to interesting vector-valued generalizations.\

\item In (\cite{annals}, 2011) it is proved that the polynomial
Bohnenblust--Hille inequality is hypercontractive. Several striking
applications are presented.

\item In (\cite{jmaa}, 2012) new constants satisfying the multilinear
Bohnenblust--Hille inequality are presented, based on the arguments of the new
proof of the Bohnenblust--Hille theorem from \cite{defant}. An improvement of
this approach (for the case of complex scalars) is presented in (\cite{ap2}, 2012).

\item In (\cite{Mu}, 2012) some numerical investigations on the asymptotic
growth of the constants satisfying the multilinear Bohnenblust--Hille
inequality are presented; in this direction, in (\cite{jfa}, 2012) some
somewhat surprising results are obtained:

\medskip

\textbf{Theorem }(\cite{Mu}). There exists a sequence $\left(  Z_{n}\right)
_{n=1}^{\infty}$ satisfying the multilinear Bohnenblust--Hille inequality and
\[
\lim_{n\rightarrow\infty}\frac{Z_{n+1}}{Z_{n}}=1.
\]

\medskip

\textbf{Theorem }(\cite[Appendix]{Mu}). The optimal constants $\left(
K_{n}\right)  _{n=1}^{\infty}$ satisfying the multilinear Bohnenblust--Hille
inequality have a subexponential growth. In particular, if there is a constant
$L>0$ so that
\[
\lim_{n\rightarrow\infty}\frac{K_{n+1}}{K_{n}}=L,
\]
then
\[
L=1.
\]

\item In (\cite{ap}, 2012) a Dichotomy Theorem is proved and, as a
consequence, for example, it is shown that the optimal constants satisfying
the multilinear Bohnenblust--Hille inequality do not have a polynomial expression.

\item In (\cite{diniz2, munn}, 2012), in a completely different line of
attack, the authors obtain lower bounds for the constants of the multilinear
and polynomial Bohnenblust--Hille inequalities.

\item In (\cite{Diana}, 2012) an explicit formula for some recursive formulae
for constants satisfying the multilinear Bohnenblust--Hille inequality (from
\cite{jfa, jmaa}) is obtained (the original formulae on \cite{jfa, jmaa} were
obtained via a complicated recursive formula).
\end{itemize}

\section{The Fundamental Lemma\label{w3}}

We need to recall some notation. We shall work with the case of real scalars
but, as mentioned before, the same results hold in the case of complex scalars.

As earlier, the Greek letter $\gamma$ shall denote the Euler--Mascheroni
constant,
$$\gamma=\lim_{m\rightarrow\infty}\left(  -\log m+\sum_{k=1}^{m}\frac{1}{k}\right)  \approx 0.5772.$$
Also, henceforth, we use the notation
\begin{equation}
A_{p}:=\sqrt{2}\left(  \frac{\Gamma(\frac{p+1}{2})}{\sqrt{\pi}}\right)
^{1/p},\label{kkklll}%
\end{equation}
for $p>p_{0}\approx1.847$ and
\begin{equation}
A_{p}:=2^{\frac{1}{2}-\frac{1}{p}}\label{nob}%
\end{equation}
for $p\leq p_{0}\approx1.847.$ The precise definition of $p_{0}$ is the
following: $p_{0}\in(1,2)$ is the unique real number with
\[
\Gamma\left(  \frac{p_{0}+1}{2}\right)  =\frac{\sqrt{\pi}}{2}.
\]
The constants $A_{p}$ are precisely the best constants satisfying Khinchine's
inequality (these constants are due to U. Haagerup \cite{Ha}). In \cite{jmaa}
it was proved that the following constants satisfy the multilinear
Bohnenblust--Hille inequality:
\begin{equation}
C_{m}=\left\{
\begin{array}
[c]{ll}%
1 & \text{if }m=1,\\
\left(  A_{\frac{2m}{m+2}}^{m/2}\right)  ^{-1}C_{\frac{m}{2}} & \text{if
}m\text{ is even, and }\\
\left(  A_{\frac{2m-2}{m+1}}^{\frac{-1-m}{2}}C_{\frac{m-1}{2}}\right)
^{\frac{m-1}{2m}}\left(  A_{\frac{2m+2}{m+3}}^{\frac{1-m}{2}}C_{\frac{m+1}{2}%
}\right)  ^{\frac{m+1}{2m}} & \text{if }m\text{ is odd.}%
\end{array}
\right.  \label{orig}%
\end{equation}

From now on, $C_{m}$ shall always stand for the constants in \eqref{orig}. Up
to now these are the best (smallest) known constants satisfying the (real)
multilinear Bohnenblust--Hille inequality (these constants also work for the
complex case, although in this case even smaller constants are known; see
Section 9). It was not known if the sequence $\left(  C_{m}\right)
_{m=1}^{\infty}$ is increasing; in \cite{jfa} it was proved that if the above
sequence $\left(  C_{m}\right)  _{m=1}^{\infty}$ is increasing, then
\begin{equation}
\lim_{m\rightarrow\infty}\frac{C_{m}}{C_{m-1}}=1.\label{qw55}%
\end{equation}
If $\left(  C_{m}\right)  _{m=1}^{\infty}$ is not increasing, the sequence
\begin{equation}
C_{n}^{\prime}=\left\{
\begin{array}
[c]{ll}%
1 & \text{ if }n=1,\\
DC_{n/2}^{\prime} & \text{ if }n\text{ is even, and }\\
D\left(  C_{\frac{n-1}{2}}^{\prime}\right)  ^{\frac{n-1}{2n}}\left(
C_{\frac{n+1}{2}}^{\prime}\right)  ^{\frac{n+1}{2n}} & \text{ for }n\text{
odd,}%
\end{array}
\right.  \label{yt}%
\end{equation}
is such that
\[
\lim_{n\rightarrow\infty}\frac{C_{n+1}^{\prime}}{C_{n}^{\prime}}=1.
\]
Above, $D$ (whose precise value was not known) is any common upper bound for
\begin{equation}
\left(  A_{\frac{2m}{m+2}}^{-m/2}\right)  _{m=1}^{\infty}\text{ }\label{ty}%
\end{equation}
and
\begin{equation}
\left(  \left(  A_{\frac{2m-2}{m+1}}^{\frac{-1-m}{2}}\right)  ^{\frac{m-1}%
{2m}}\left(  A_{\frac{2m+2}{m+3}}^{\frac{1-m}{2}}\right)  ^{\frac{m+1}{2m}%
}\right)  _{m=1}^{\infty}.\label{ty77}%
\end{equation}
In \cite{jfa} it was also proved that both sequences tend to $\frac
{e^{1-\frac{1}{2}\gamma}}{\sqrt{2}}\approx1.4403$ but no information about
their eventual monotonicity is provided. To summarize, in \cite{Mu} is shown
that there exists a sequence of constants $\left(  Z_{m}\right)
_{m=1}^{\infty}$ satisfying the multilinear Bohnenblust--Hille inequality and
so that
\[
\lim_{n\rightarrow\infty}\frac{Z_{n+1}}{Z_{n}}=1,
\]
but the precise formula of the constants $Z_{m}$ depends on the (unknown) value of $D$ or, of course, on the (unknown) monotonicity of the constants \eqref{orig}.

In the present paper, as preparatory lemmata, we solve both problems by
proving that:

\begin{itemize}
\item[(i.-)] The sequence given in \eqref{orig} is increasing.

\item[(ii.-)] $D=$ $\frac{e^{1-\frac{1}{2}\gamma}}{\sqrt{2}}\approx1.4403$
(and, of course, this value is sharp).
\end{itemize}

This information has useful consequences. The fact that $D<2$ shall be crucial
for the proof of our first result (Theorem \ref{t4}), which we call
Fundamental Lemma.

The concrete estimate for $D$ allows us to deal with a simple presentation of
good (small) estimates for the constants of the multilinear Bohnenblust--Hille
inequality. More precisely (using the value of $D$) now we know that the
sequence
\begin{equation}
S_{n}=\left\{
\begin{array}[c]{ll}
\left(  \sqrt{2}\right)  ^{n-1} & \text{ if }n=1,2\\
\left(  \frac{e^{1-\frac{1}{2}\gamma}}{\sqrt{2}}\right)  S_{n/2} & \text{ for
}n\text{ even,}\\
\left(  \frac{e^{1-\frac{1}{2}\gamma}}{\sqrt{2}}\right)  \left(  S_{\frac
{n-1}{2}}\right)  ^{\frac{n-1}{2n}}\left(  S_{\frac{n+1}{2}}\right)
^{\frac{n+1}{2n}} & \text{ for }n\text{ odd}
\end{array}
\right.  \label{di11}
\end{equation}
satisfies the multilinear Bohnenblust--Hille inequality. This estimate for $D$ can also be used in the explicit formula for the constants \eqref{di11} presented in \cite{Diana}.

The sequence $\left(  R_{n}\right)  _{n=1}^{\infty}$ in the Fundamental Lemma
is a slight modification of the sequence \eqref{di11}. A natural question is
why not to work directly with the sequences \eqref{yt} or \eqref{di11}? The
main reason is that, having in mind the applications related to the optimal
constants provided in this paper, in fact we need to quantify how
$R_{n+1}-R_{n}$ tends to zero, and the direct estimation of how $C_{n+1}-C_{n}
$ or $S_{n+1}-S_{n}$ tend to zero is not a good approach. It is important to
notice that, as it shall be clear later, this slight modification keeps the
essence of the sequence $\left(  C_{n}\right)  _{n=1}^{\infty}$ in the sense
that it does not modify its asymptotic growth.

\section{Summary of the main results\label{ss11}}

The proof of the Fundamental Lemma furnishes concrete information on the
optimal constants satisfying the multilinear Bohnenblust--Hille inequality.
Our constructive approach provides an explicit sequence of constants with the
desired property. We also estimate how the difference $R_{n+1}-R_{n}$ tends
(monotonely) to $0^{+}$. In fact we have
\begin{equation}
R_{n+1}-R_{n}<\frac{0.87}{n^{0.473678}} \label{ee22}%
\end{equation}
for every positive integer $n.$ More precisely our constants are so that
\begin{equation}
R_{n+1}-R_{n}\leq\left(  2\sqrt{2}-4e^{\frac{1}{2}\gamma-1}\right)
n^{\log_{2}\left(  2^{-3/2}e^{1-\frac{1}{2}\gamma}\right)  }. \label{ee23}%
\end{equation}
The estimates \eqref{ee22}, \eqref{ee23} are crucial for the applications to
the optimal constants. Without our approach (working directly with the
sequences obtained in \cite{jfa, ap2, jmaa}) it would be rather difficult to
achieve the same results due their forbidding recursive formulae of the
previous sequences. Even the closed (explicit) formula for the multilinear
Bohnenblust--Hille constants presented in \cite{Diana} lodges some technical
difficulties when estimating the difference $S_{n+1}-S_{n}.$

We also stress that in all previous related papers there was not available
information on the monotonicity of the limits involving the Gamma function and
this lack of information was a peremptory barrier for estimating
$C_{n+1}-C_{n}.$

The constants $\left(  R_{n}\right)  _{n=1}^{\infty}$ that we obtain here with
the property \eqref{aaw} are slightly bigger than the constants from
\cite{jfa, ap2, jmaa} but, on the other hand, they are constructed in a more
simple fashion so that with a careful control of the monotonicity of the
expressions involving the Gamma Function, we are finally able to quantify how
far $R_{n+1}-R_{n}$ approaches to zero. As it shall be shown, although
$C_{n}\leq R_{n},$ these sequences have essentially the same asymptotic
behavior. The main results of this paper are the following consequences of the
above results:

\begin{itemize}
\item \emph{(Theorem \ref{56})} Let $\left(  K_{n}\right)  _{n=1}^{\infty}$ be
the sequence of best constants satisfying the multilinear Bohnenblust--Hille
inequality. If there is an $L\in\lbrack-\infty,\infty]$ so that
\[
\lim_{n\rightarrow\infty}\left(  K_{n+1}-K_{n}\right)  =L,
\]
then
\[
L=0.
\]

\item \emph{(Theorem \ref{uty} and Section \ref{complex})} Let $\left(
K_{n}\right)  _{n=1}^{\infty}$ be the optimal constants of the multilinear
Bohnenblust--Hille inequality. For any $\varepsilon>0$, we have
\begin{align*}
K_{n+1}-K_{n}  &  <\left(  2\sqrt{2}-4e^{\frac{1}{2}\gamma-1}\right)
n^{\log_{2}\left(  2^{-3/2}e^{1-\frac{1}{2}\gamma}\right)  +\varepsilon}\text{
(real scalars)}\\
K_{n+1}-K_{n}  &  <\left(  \frac{4}{\sqrt{\pi}}-\frac{4}{e^{\frac{1}{2}%
-\frac{1}{2}\gamma}\sqrt{\pi}}\right)  n^{\log_{2}\left(  \frac{e^{\frac{1}%
{2}-\frac{1}{2}\gamma}}{2}\right)  +\varepsilon}\text{ (complex scalars)}%
\end{align*}
for infinitely many $n$'s. Numerically, choosing a sufficiently small
$\varepsilon>0$,
\begin{align*}
K_{n+1}-K_{n}  &  <\frac{0.87}{n^{0.473678}}\text{ (real scalars)}\\
K_{n+1}-K_{n}  &  <\frac{0.44}{n^{0.695025}}\text{ (complex scalars)}%
\end{align*}

\item \emph{(Corollary \ref{t6})} \label{lema1222} The optimal multilinear
Bohnenblust--Hille constants $\left(  K_{n}\right)  _{n=1}^{\infty}$ satisfy
\[
\lim_{n}\inf\left(  K_{n+1}-K_{n}\right)  \leq0.
\]

\item \emph{(Theorem \ref{tt55} and Section \ref{complex})} The optimal
multilinear Bohnenblust--Hille constants $\left(  K_{n}\right)  _{n=1}%
^{\infty} $ satisfy
\begin{align*}
K_{n}  &  <1+0.87\cdot\sum_{j=1}^{n-1}\frac{1}{j^{0.473678}}\text{ (real
scalars)}\\
K_{n}  &  <1+0.44\cdot{\sum\limits_{j=1}^{n-1}}\frac{1}{j^{0.695025}}\text{
(complex scalars)}%
\end{align*}
for every $n\geq2$.

\item \emph{(Corollary \ref{nnhh} and Section \ref{complex})} The optimal
multilinear Bohnenblust--Hille constants $\left(  K_{n}\right)  _{n=1}%
^{\infty} $ satisfy%
\begin{align*}
K_{n}  &  <1.65\left(  n-1\right)  ^{0.526322}+0.13\text{ (real scalars)}\\
K_{n}  &  <1.41\left(  n-1\right)  ^{0.304975}-0.04\text{ (complex scalars)}%
\end{align*}
for every $n\geq2$.
\end{itemize}

The above results complement and complete recent information given in
\cite{ap}.

\section{First results: technical lemmata\label{p0}}

Our first result, and crucial for our goals, is the proof that the sequence
$\left(  A_{\frac{2m}{m+2}}^{-m/2}\right)  _{m=1}^{\infty}$ is increasing. We
stress that this is not an obvious result. In fact, since the sequence
$\left(  A_{p}\right)  _{p\geq1}$ is composed by the best constants satisfying
the Khinchine inequality, using the monotonicity of the $L_{p}$-norms we can
conclude that
\[
\left(  A_{\frac{2m}{m+2}}\right)  _{m=1}^{\infty}\subset(0,1)
\]
is increasing. Hence
\[
\left(  A_{\frac{2m}{m+2}}^{-1}\right)  _{m=1}^{\infty}\subset(1,\infty)
\]
is decreasing; thus, since $\left(  m/2\right)  _{m=1}^{\infty}$ is
increasing, no straightforward conclusions on the monotonicity of $\left(
A_{\frac{2m}{m+2}}^{-m/2}\right)  _{m=1}^{\infty}$ can be inferred. The key
result used in the proof of the following lemmata is an useful theorem due to
F. Qi \cite{qi} asserting that
\[
\left(  \frac{\Gamma\left(  s\right)  }{\Gamma\left(  r\right)  }\right)
^{\frac{1}{s-r}}%
\]
increases with $r,s>0.$

\begin{lemma}
\label{lema1}The sequence $\left(  A_{\frac{2m}{m+2}}^{-m/2}\right)
_{m=1}^{\infty}$ is increasing. In particular
\[
C_{2m}\leq\left(  \frac{e^{1-\frac{1}{2}\gamma}}{\sqrt{2}}\right)  C_{m}%
\]
for all $m.$
\end{lemma}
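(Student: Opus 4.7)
Let $F(m) := A_{2m/(m+2)}^{-m/2}$ and write $p(m) := 2m/(m+2)$. I would extend $F$ to a continuous function of real $m \geq 1$ and show it is non-decreasing; the integer statement then follows by restriction. In the regime $p(m) \leq p_0$, the formula $A_p = 2^{1/2 - 1/p}$ together with the identity $1/p(m) - 1/2 = 1/m$ gives $F(m) \equiv \sqrt{2}$ by direct calculation. In the complementary regime $p(m) > p_0$, set $s(m) := (p(m)+1)/2 = 3/2 - 2/(m+2)$; using $1/p(m) = (m+2)/(2m)$ in $A_{p(m)} = \sqrt{2}(\Gamma(s(m))/\sqrt{\pi})^{1/p(m)}$ yields
\[
4\log F(m) = -m\log 2 + \tfrac{m+2}{2}\log\pi - (m+2)\log\Gamma(s(m)).
\]

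Differentiating in $m$, and using $s'(m) = 2/(m+2)^2$, $(m+2)s'(m) = 2/(m+2) = 3/2 - s(m)$, and $\tfrac12 \log \pi - \log 2 = \log\Gamma(3/2)$, one obtains
\[
\frac{4F'(m)}{F(m)} = \log\Gamma\bigl(\tfrac32\bigr) - \log\Gamma(s(m)) - \bigl(\tfrac32 - s(m)\bigr)\psi(s(m)).
\]
This is non-negative, because it is exactly the tangent-line inequality at $s(m)$ for the convex function $\log\Gamma$ on $(0,\infty)$ (equivalently, the limiting case $s \downarrow r$ of Qi's inequality $(\Gamma(s)/\Gamma(r))^{1/(s-r)} \geq \exp(\psi(r))$). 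Since the two defining formulas for $A_p$ coincide at $p_0$ (that is Haagerup's definition of $p_0$), $F$ is continuous at the transition, and the two regimes paste together to show $F$ is non-decreasing on $[1,\infty)$.

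For the ``in particular'' statement I would identify $\sup_m F(m) = \lim_m F(m)$ via a second-order Taylor expansion of $\log\Gamma$ at $3/2$: this gives $\log A_{p(m)} = -(m+2)^{-1}[\log 2 + \psi(3/2)] + O(m^{-2})$, and the standard value $\psi(3/2) = 2 - \gamma - 2\log 2$ produces $\lim_m F(m) = e^{1-\gamma/2}/\sqrt{2}$. Applying the recursion \eqref{orig} at the even index $2m$ then gives $C_{2m} = A_{2m/(m+1)}^{-m} C_m = F(2m)\,C_m \leq (e^{1-\gamma/2}/\sqrt{2})\,C_m$. The main obstacle is the recognition step in the monotonicity argument, where the cleanly-differentiated expression must be identified with the tangent-line inequality for $\log\Gamma$; a direct discrete approach comparing $F(m)$ and $F(m+1)$ would produce awkward inequalities between $\log\Gamma(s(m))$ and $\log\Gamma(s(m+1))$ with mismatched exponents that do not reduce cleanly to a single standard convexity fact.
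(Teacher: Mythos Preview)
Your proof is correct and self-contained, and it rests on the same underlying convexity fact as the paper, but the execution differs in several respects worth noting. The paper splits at the threshold $m=25$ (where $p(m)$ crosses $p_0$), handles the small-$m$ range by ``direct inspection,'' and for the large-$m$ range rewrites
\[
A_{\frac{2m}{m+2}}^{-m/2}=\sqrt{2}\left(\frac{\Gamma(s(m))}{\Gamma(3/2)}\right)^{-(m+2)/4},\qquad s(m)=\frac{3m+2}{2m+4},
\]
observes that $-(m+2)/2=1/(s(m)-3/2)$, and then invokes Qi's theorem (that $(\Gamma(s)/\Gamma(r))^{1/(s-r)}$ is increasing in $s$) as a black box. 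Your approach replaces this citation with a direct calculus argument: you differentiate $\log F(m)$ and recognise the resulting expression as the tangent-line inequality for the convex function $\log\Gamma$, which is exactly the infinitesimal form of Qi's inequality. This is more elementary---convexity of $\log\Gamma$ is textbook---and it also makes the small-$m$ range transparent: your identity $1/p(m)-1/2=1/m$ gives $F(m)\equiv\sqrt{2}$ there outright, which is neater than an inspection of twenty-four cases. Finally, the paper does not compute the limit $e^{1-\gamma/2}/\sqrt{2}$ in this proof but imports it from \cite{jfa}; your Taylor expansion with $\psi(3/2)=2-\gamma-2\log 2$ supplies it on the spot. The trade-off is length: the paper's proof is a few lines once Qi is cited, whereas yours carries out the differentiation and limit explicitly.
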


\begin{proof}
Since
\[
\frac{2m}{m+2}>p_{0}\approx1.847
\]
for all $m\geq25,$ the formula \eqref{kkklll} holds only for $m\geq25;$ but a
direct inspection (using \eqref{nob}) shows that the sequence is increasing
for $m<25.$

For $m\geq25$, note that
\[
A_{\frac{2m}{m+2}}^{-m/2}=\frac{1}{\sqrt{2}}\left(  \frac{\Gamma\left(
\frac{3m+2}{2m+4}\right)  }{\Gamma\left(  \frac{3}{2}\right)  }\right)
^{\frac{m+2}{-4}}.
\]
But, from \cite[Theorem 2]{qi} we know that
\[
\left(  \left(  \frac{\Gamma\left(  \frac{3m+2}{2m+4}\right)  }{\Gamma\left(
\frac{3}{2}\right)  }\right)  ^{\frac{m+2}{-2}}\right)  _{m=1}^{\infty}%
\]
is increasing and the conclusion is immediate.
\end{proof}

A first consequence of this lemma solves a question left open in \cite{jfa}.

\begin{proposition}
The sequence
\[
C_{n}=\left\{
\begin{array}
[c]{ll}%
1 & \text{ if } n=1,\\
\left(  A_{\frac{2n}{n+2}}^{n/2}\right)  ^{-1}C_{\frac{n}{2}} & \text{ if } n
\text{ is even, and }\\
\left(  A_{\frac{2n-2}{n+1}}^{\frac{-1-n}{2}}C_{\frac{n-1}{2}}\right)
^{\frac{n-1}{2n}}\left(  A_{\frac{2n+2}{n+3}}^{\frac{1-n}{2}}C_{\frac{n+1}{2}%
}\right)  ^{\frac{n+1}{2n}} & \text{ if } n\text{ is odd}%
\end{array}
\right.
\]
is increasing.
\end{proposition}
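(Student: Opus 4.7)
The plan is to prove this by strong induction on $n$, with Lemma \ref{lema1} as the essential ingredient. I would first recast the recursion in a cleaner form. Setting $f(k):=A_{2k/(k+2)}^{-k/2}$ (so that Lemma \ref{lema1} is the statement that $f$ is strictly increasing), a short manipulation of the exponents — identifying each factor $A_{(2m\mp 2)/(m\mp 2+1)}^{(\mp 1-m)/2}$ with the appropriate power of $f(m\mp 1)$ — transforms the definition of $C_n$ into
\begin{equation*}
C_n = f(n)\,C_{n/2}\qquad(n\text{ even}),
\end{equation*}
\begin{equation*}
C_n = f(n-1)^{(n+1)/(2n)}\,f(n+1)^{(n-1)/(2n)}\,C_{(n-1)/2}^{(n-1)/(2n)}\,C_{(n+1)/2}^{(n+1)/(2n)}\qquad(n\text{ odd}).
\end{equation*}

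The base step $C_1=1\le\sqrt 2=C_2$ is direct. For the inductive step, assume $C_k\le C_{k+1}$ for all $k<n$ and split by parity. When $n$ is odd (and $n+1$ even), partial cancellation of the common $f(n+1)$ and $C_{(n+1)/2}$ factors in the ratio $C_{n+1}/C_n$ yields
\begin{equation*}
\frac{C_{n+1}}{C_n}=\left(\frac{f(n+1)}{f(n-1)}\right)^{(n+1)/(2n)}\left(\frac{C_{(n+1)/2}}{C_{(n-1)/2}}\right)^{(n-1)/(2n)};
\end{equation*}
the first factor is $>1$ by Lemma \ref{lema1}, and the second is $\ge 1$ by the inductive hypothesis, applied at the consecutive indices $(n-1)/2$ and $(n+1)/2$, both strictly less than $n$. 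When $n$ is even (and $n+1$ odd), a parallel calculation gives
\begin{equation*}
\frac{C_{n+1}}{C_n}=\left(\frac{f(n+2)}{f(n)}\right)^{n/(2(n+1))}\left(\frac{C_{n/2+1}}{C_{n/2}}\right)^{(n+2)/(2(n+1))},
\end{equation*}
and the same two ingredients — Lemma \ref{lema1} and the inductive hypothesis — close the case.

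The argument is conceptually light: Lemma \ref{lema1} is the only substantive input, and the main obstacle is algebraic bookkeeping. One has to reinterpret each $A$-factor as a power of $f$ and then collect exponents so that $C_{n+1}/C_n$ splits cleanly as the product of a ratio of $f$-values (controlled by monotonicity) and a ratio of smaller $C$-values (controlled by the induction). Once the split is in place, strict positivity of both exponents combined with these two inputs forces $C_{n+1}/C_n>1$, completing the induction.
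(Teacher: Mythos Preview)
Your proof is correct and follows essentially the same route as the paper's: strong induction on $n$, splitting into the two parity cases, with Lemma~\ref{lema1} supplying the monotonicity of $f(k)=A_{2k/(k+2)}^{-k/2}$ and the inductive hypothesis handling the smaller $C$-ratios. Your introduction of $f$ and the explicit computation of $C_{n+1}/C_n$ make the bookkeeping somewhat tidier than the paper's equivalent-inequality manipulations, but the substance is identical.
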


\begin{proof}
We proceed by induction (the first values can be directly checked). Let us
suppose that the result is valid for all positive integers smaller than $n-1$
and, then, use induction.

\textbf{First case. }$n$ is even.

Note that
\[
C_{n}\leq C_{n+1}%
\]
if and only if
\[
\frac{C_{n/2}}{A_{\frac{2n}{n+2}}^{{\small n/2}}}\leq\left(  \frac{C_{n/2}%
}{A_{\frac{2n}{n+2}}^{\left(  {\small n+2}\right)  {\small /2}}}\right)
^{\frac{n}{2\left(  n+1\right)  }}.\left(  \frac{C_{\frac{n+2}{2}}}%
{A_{\frac{2n+4}{n+4}}^{{\small n/2}}}\right)  ^{\frac{n+2}{2\left(
n+1\right)  }}%
\]
and this is equivalent to
\[
\left(  C_{n/2}\right)  ^{\frac{n+2}{2\left(  n+1\right)  }}\left(  \left(
A_{\frac{2n}{n+2}}^{{\small n/2}}\right)  ^{-1}\right)  ^{\frac{n}{2\left(
n+1\right)  }}\leq\left(  C_{\frac{n+2}{2}}\right)  ^{\frac{n+2}{2\left(
n+1\right)  }}\left(  \left(  A_{\frac{2n+4}{n+4}}^{\left(  {\small n+2}%
\right)  {\small /2}}\right)  ^{-1}\right)  ^{\frac{n}{2\left(  n+1\right)  }%
}.
\]
But the last inequality is true. In fact, from the induction hypothesis we
have
\[
C_{n/2}\leq C_{\frac{n+2}{2}}%
\]
and from Lemma \ref{lema1} we know that
\[
\left(  A_{\frac{2n}{n+2}}^{{\small n/2}}\right)  ^{-1}\leq\left(
A_{\frac{2n+4}{n+4}}^{\left(  {\small n+2}\right)  {\small /2}}\right)  ^{-1}%
\]
holds.

\textbf{Second case. }$n$ is odd.

A similar argument shows that
\[
C_{n}\leq C_{n+1}%
\]
if and only if%
\[
\frac{\left(  C_{\left(  n-1\right)  /2}\right)  ^{\frac{n-1}{2n}}}{\left(
A_{\frac{2n-2}{n+1}}^{\left(  {\small n-1}\right)  {\small /2}}\right)
^{\frac{n+1}{2n}}}\leq\frac{\left(  C_{\left(  n+1\right)  /2}\right)
^{\frac{n-1}{2n}}}{\left(  A_{\frac{2n+2}{n+3}}^{\left(  {\small n+1}\right)
{\small /2}}\right)  ^{\frac{n+1}{2n}}}%
\]
and this inequality is true using the induction hypothesis and Lemma
\ref{lema1}.
\end{proof}

\begin{lemma}
The sequence
\[
\left(  \left(  \left(  A_{\frac{2m+2}{m+3}}^{\frac{m-1}{2}}\right)
^{-1}\right)  ^{\frac{m+1}{2m}}.\left(  \left(  A_{\frac{2m-2}{m+1}}%
^{\frac{m+1}{2}}\right)  ^{-1}\right)  ^{\frac{m-1}{2m}}\right)
_{m=1}^{\infty}%
\]
is bounded by%
\[
D:=\left(  \frac{e^{1-\frac{1}{2}\gamma}}{\sqrt{2}}\right)  .
\]

\end{lemma}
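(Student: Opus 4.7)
The plan is to reduce the bound to Lemma~\ref{lema1} via a symmetry in the exponents. Writing $u_k := A_{\frac{2k}{k+2}}^{-k/2}$ for the sequence studied in that lemma, the key observation is that the exponent of each of the two $A$-factors in the displayed expression equals $-\frac{(m+1)(m-1)}{4m}$, a product that factors in two natural ways: $\frac{m+1}{2}\cdot\frac{m-1}{2m} = \frac{m-1}{2}\cdot\frac{m+1}{2m}$. Trading the ``wrong'' pairing for the ``right'' one lets me recognize each factor as a power of some $u_k$.

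Concretely, first I would expand the product into
\[
T_m := A_{\frac{2m+2}{m+3}}^{-\frac{(m-1)(m+1)}{4m}} \cdot A_{\frac{2m-2}{m+1}}^{-\frac{(m+1)(m-1)}{4m}}.
\]
Since $\frac{2m+2}{m+3} = \frac{2(m+1)}{(m+1)+2}$ and $\frac{2m-2}{m+1} = \frac{2(m-1)}{(m-1)+2}$, the refactoring of the exponents yields
\[
T_m = u_{m+1}^{\frac{m-1}{2m}} \cdot u_{m-1}^{\frac{m+1}{2m}}.
\]
At this point I invoke Lemma~\ref{lema1}, which shows that $(u_k)_k$ is increasing, together with the limit $u_k \to D$ recalled from \cite{jfa} in the paragraph preceding the lemma; this forces $u_k \leq D$ for every $k$. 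Since the two outer exponents sum to $\frac{m-1}{2m} + \frac{m+1}{2m} = 1$, I conclude
\[
T_m \leq D^{\frac{m-1}{2m}} \cdot D^{\frac{m+1}{2m}} = D,
\]
which is the claim.

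The only real obstacle is spotting the algebraic rewriting that brings the two $A$-factors into the form $u_{m\pm 1}$; once this symmetry is noticed, the argument is mechanical. A secondary point is the identification $\lim_k u_k = D$, but this is a known limit explicitly cited in the text, so no new work is required. The boundary case $m = 1$, where one index would become $0$, can be handled by direct inspection, because the factor with exponent $0$ simply equals $1$.
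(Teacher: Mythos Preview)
Your proposal is correct and follows essentially the same route as the paper: both arguments rewrite the product as $X_{m+1}^{\frac{m-1}{2m}}\,X_{m-1}^{\frac{m+1}{2m}}$ (with $X_k=A_{\frac{2k}{k+2}}^{-k/2}$) via the exponent symmetry $\frac{m-1}{2}\cdot\frac{m+1}{2m}=\frac{m+1}{2}\cdot\frac{m-1}{2m}$, and then use Lemma~\ref{lema1} together with the limit $X_k\to D$ to get $X_k\le D$. The only cosmetic difference is that the paper first bounds the product by $X_{m+1}$ (using $X_{m-1}\le X_{m+1}$) before applying $X_{m+1}\le D$, whereas you bound each factor by $D$ directly and sum the exponents.
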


\begin{proof}
Let
\[
X_{m}:=A_{\frac{2m}{m+2}}^{-m/2}%
\]
for all $m.$ From Lemma \ref{lema1} we know that $\left(  X_{m}\right)
_{m=1}^{\infty}$ is increasing and bounded by $D$. Note that
\[
\left(  \left(  A_{\frac{2m-2}{m+1}}^{\frac{m-1}{2}}\right)  ^{-1}\right)
=X_{m-1}\leq X_{m+1}=\left(  \left(  A_{\frac{2m+2}{m+3}}^{\frac{m+1}{2}%
}\right)  ^{-1}\right)  .
\]
Thus we have
\begin{align*}
&  \left(  \left(  A_{\frac{2m+2}{m+3}}^{\frac{m-1}{2}}\right)  ^{-1}\right)
^{\frac{m+1}{2m}}.\left(  \left(  A_{\frac{2m-2}{m+1}}^{\frac{m+1}{2}}\right)
^{-1}\right)  ^{\frac{m-1}{2m}}\\
&  =\left(  \left(  A_{\frac{2m+2}{m+3}}^{\frac{m+1}{2}}\right)  ^{-1}\right)
^{\frac{m-1}{2m}}.\left(  \left(  A_{\frac{2m-2}{m+1}}^{\frac{m-1}{2}}\right)
^{-1}\right)  ^{\frac{m+1}{2m}}\\
&  =\left(  X_{m+1}\right)  ^{\frac{m-1}{2m}}\left(  X_{m-1}\right)
^{\frac{m+1}{2m}}\\
&  \leq X_{m+1}.
\end{align*}
Since $\left(  X_{m}\right)  _{m=1}^{\infty}$ is increasing and bounded by $D$
we conclude that
\[
\left(  \left(  \left(  A_{\frac{2m+2}{m+3}}^{\frac{m-1}{2}}\right)
^{-1}\right)  ^{\frac{m+1}{2m}}.\left(  \left(  A_{\frac{2m-2}{m+1}}%
^{\frac{m+1}{2}}\right)  ^{-1}\right)  ^{\frac{m-1}{2m}}\right)
_{m=1}^{\infty}%
\]
is also bounded by $D.$
\end{proof}

\section{The proof of the Fundamental Lemma\label{w4}}

In this section we prove the Fundamental Lemma. We note that $\left(
S_{n}\right)  _{n=1}^{\infty}$ (defined in \eqref{di11}) is increasing and
satisfies the multilinear Bohnenblust--Hille inequality. The proof of the
first assertion is straightforward; for the proof of the second assertion we
just need to observe that $C_{n}\leq S_{n}$ for all $n.$ We recall that a
closed formula for the constants $\left(  S_{n}\right)  _{n=1}^{\infty}$ with
a generic $D$ in the place of $\left(  \frac{e^{1-\frac{1}{2}\gamma}}{\sqrt
{2}}\right)  $ appears in \cite{Diana}. Using the previous lemmata, the new
sequence defined by%
\[
M_{n}=\left\{
\begin{array}
[c]{ll}%
\left(  \sqrt{2}\right)  ^{n-1} & \text{ if }n=1,2\\
\left(  \frac{e^{1-\frac{1}{2}\gamma}}{\sqrt{2}}\right)  M_{\frac{n}{2}} &
\text{ if }n\text{ is even, and}\\
\left(  \frac{e^{1-\frac{1}{2}\gamma}}{\sqrt{2}}\right)  M_{\frac{n+1}{2}} &
\text{ if }n\text{ is odd}%
\end{array}
\right.
\]
is so that
\[
C_{n}\leq S_{n}\leq M_{n}%
\]
and a \textquotedblleft uniform perturbation\textquotedblright\ of this
sequence $\left(  M_{n}\right)  _{n=1}^{\infty}$ shall be the desired
sequence. Let
\[
D:=\left(  \frac{e^{1-\frac{1}{2}\gamma}}{\sqrt{2}}\right)  \approx1.4403.
\]
and, for all $k\geq1$, consider
\[
B_{k}:=\{2^{k-1}+1,\ldots,2^{k}\}.
\]

\begin{remark}
It is easy to note that for all $n\geq2$ we have
\[
M_{n}=\sqrt{2}D^{k-1}\text{ whenever }n\in B_{k}%
\]
and for this reason
\begin{equation}
\lim_{n\rightarrow\infty}\left(  M_{n+1}-M_{n}\right)  \label{ttrr}%
\end{equation}
does not exist.
\end{remark}

Since the limit \eqref{ttrr} does not exist, now consider the sequence
$\left(  R_{n}\right)  _{n=1}^{\infty}$, which is a slight uniform
perturbation of the sequence $\left(  M_{n}\right)  _{n=1}^{\infty}:$%
\begin{equation}
R_{n}:=\sqrt{2}\left(  D^{k-1}+\left(  j_{n}-1\right)  \left(  \frac
{D^{k}-D^{k-1}}{2^{k-1}}\right)  \right)  ,\text{ whenever }n\in B_{k}
\label{wwwd}%
\end{equation}
where $j_{n}$ is the position of $n$ in the order of the elements of $B_{k}.$

It is plain that
\[
M_{n}\leq R_{n}
\]
for all $n\geq3$ and, as we shall see,
\[
\left(  R_{n+1}-R_{n}\right)  _{n=1}^{\infty}
\]
is decreasing (monotone and non-increasing). Using the definition of $\left(
R_{n}\right)  _{n=1}^{\infty}$ with a careful handling of the expressions
involved it is not difficult to estimate how $R_{n+1}-R_{n}$ decreases to zero:

\begin{theorem}[The Fundamental Lemma]\label{t4}
The sequence \eqref{wwwd} satisfies the multilinear Bohnenblust--Hille inequality and $\left(  R_{n+1}-R_{n}\right)_{n=1}^{\infty}$ is decreasing and converges to zero. Moreover
\begin{equation}
R_{n+1}-R_{n}\leq\left(  2\sqrt{2}-4e^{\frac{1}{2}\gamma-1}\right) n^{\log_{2}\left(  2^{-3/2}e^{1-\frac{1}{2}\gamma}\right)  } \label{rhs}
\end{equation}
for all $n.$ Numerically,
\[
R_{n+1}-R_{n}<\frac{0.87}{n^{0.473678}}.
\]

\end{theorem}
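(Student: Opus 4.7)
The plan is to split the statement into its three claims---that $(R_n)$ still satisfies the multilinear Bohnenblust--Hille inequality, that the gap sequence $(R_{n+1}-R_n)$ is non-increasing and tends to zero, and that it decays at least as fast as the explicit power of $n$ appearing in \eqref{rhs}---and to treat them in this order. The first reduces immediately to the chain $C_n\leq S_n\leq M_n\leq R_n$ already recorded in the preparatory discussion: $C_n\leq S_n$ is built into the definition of $S_n$, $S_n\leq M_n$ uses the upper bound $D$ on the sequences \eqref{ty} and \eqref{ty77} supplied by the previous lemmata, and $M_n\leq R_n$ is the observation made just before the statement. Since $(C_n)$ satisfies the inequality, so does the pointwise-larger $(R_n)$.

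For monotonicity and convergence, my first step is to evaluate $R_{n+1}-R_n$ explicitly from \eqref{wwwd}. Fixing $n\in B_k$, either $n+1\in B_k$ (so that $j_{n+1}=j_n+1$ and the block index is unchanged) or $n=2^k$ and $n+1$ is the first element of $B_{k+1}$ with $j_{n+1}=1$ (so the block index jumps from $k-1$ to $k$). A short substitution in either case yields the uniform value
\[
R_{n+1}-R_n\;=\;\sqrt{2}\,\frac{D^k-D^{k-1}}{2^{k-1}}\;=\;\sqrt{2}\,(D-1)\left(\frac{D}{2}\right)^{k-1}.
\]
Because $D=e^{1-\gamma/2}/\sqrt{2}<2$ by the previous lemma, the factor $D/2$ lies in $(0,1)$, so this expression is constant on each block $B_k$ and strictly shrinks by the ratio $D/2$ when $n$ crosses into $B_{k+1}$. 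This simultaneously gives non-increasing monotonicity of $(R_{n+1}-R_n)$ and its convergence to $0$.

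For the quantitative estimate \eqref{rhs}, I would combine the formula above with the simple bound $n\leq 2^k$ valid for $n\in B_k$. This gives $k-1\geq \log_2 n - 1$, and since $D/2\in(0,1)$,
\[
\left(\frac{D}{2}\right)^{k-1}\leq\left(\frac{D}{2}\right)^{\log_2 n-1}=\frac{2}{D}\,n^{\log_2(D/2)}.
\]
Multiplying by $\sqrt{2}(D-1)$ and using the identities
\[
\frac{2\sqrt{2}(D-1)}{D}=2\sqrt{2}-\frac{2\sqrt{2}}{D}=2\sqrt{2}-4e^{\gamma/2-1},\qquad \log_2(D/2)=\log_2\!\left(2^{-3/2}e^{1-\gamma/2}\right),
\]
gives \eqref{rhs} exactly; substituting $\gamma\approx 0.5772$ then yields the announced numerical bound $0.87/n^{0.473678}$.

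The main obstacle is purely bookkeeping: one must verify that the boundary transition $n=2^k\to n+1=2^k+1$ produces exactly the same gap as the interior transitions within $B_k$, and one must carry the constants through the final substitution in closed form. Once $D<2$ has been established by the preceding lemmata, no further analytic input is required, and the entire decay estimate reduces to the elementary fact that $\sqrt{2}(D-1)(D/2)^{k-1}$ is a geometric sequence in $k$ with ratio less than one, tracked against the exponential growth of the block boundaries $2^{k-1}$.
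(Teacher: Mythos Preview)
Your proposal is correct and follows essentially the same route as the paper's own proof: you compute $R_{n+1}-R_n=\sqrt{2}(D-1)(D/2)^{k-1}$ in both the interior and boundary cases, use $D<2$ to get monotonicity and convergence to zero, and then bound $(D/2)^{k-1}$ via $k-1\geq\log_2(n/2)$ (which is your $k-1\geq\log_2 n-1$) to obtain \eqref{rhs}. The only cosmetic difference is that the paper writes the exponent bound as $\log_2(n/2)\leq k-1$ rather than $\log_2 n-1\leq k-1$, and verifies the limit via the explicit subsequence $(R_{2^k+1}-R_{2^k})$; your direct appeal to the geometric ratio $D/2<1$ is equivalent.
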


\begin{proof}
Of course $\left(  R_{n}\right)  _{n=1}^{\infty}$ satisfies the multilinear Bohnenblust--Hille inequality. Let us show that $\left(  R_{n+1}-R_{n}\right)_{n=1}^{\infty}$ is decreasing. In fact, if $n\in B_{k}$, we have two possibilities:

\noindent\textbf{First case:} $n+1\in B_{k}.$ In this case
\begin{align*}
&  R_{n+1}-R_{n}=\\
&  =\sqrt{2}D^{k-1}+\sqrt{2}\left(  j_{n+1}-1\right)  \left(  \frac{D^{k}-D^{k-1}}{2^{k-1}}\right)  -\left(  \sqrt{2}D^{k-1}+\sqrt{2}\left(
j_{n}-1\right)  \left(  \frac{D^{k}-D^{k-1}}{2^{k-1}}\right)  \right) \\
&  =\sqrt{2}\left(  \frac{D^{k}-D^{k-1}}{2^{k-1}}\right).
\end{align*}

\noindent\textbf{Second case:} $n+1\in B_{k+1}.$ Here, $n=2^{k}$ and
$n+1=2^{k}+1$ and, thus,
\begin{align*}
&  R_{n+1}-R_{n}=\\
&  =\sqrt{2}D^{k}+\sqrt{2}\left(  1-1\right)  \left(  \frac{D^{k+1}-D^{k}%
}{2^{k}}\right)  \text{ }-\left(  \sqrt{2}D^{k-1}+\sqrt{2}\left(
2^{k-1}-1\right)  \left(  \frac{D^{k}-D^{k-1}}{2^{k-1}}\right)  \right) \\
&  =\sqrt{2}\left(  \frac{D^{k}-D^{k-1}}{2^{k-1}}\right)  .
\end{align*}
But, since $D<2$, we have
\[
\frac{D^{k}-D^{k-1}}{2^{k-1}}>\frac{D^{k+1}-D^{k}}{2^{k}}%
\]
and we conclude that $\left(  R_{n+1}-R_{n}\right)  _{n=1}^{\infty}$ is
decreasing. Now, if we consider the subsequence
\[
\left(  R_{2^{k}+1}-R_{2^{k}}\right)  _{k=1}^{\infty},
\]
we obtain
\begin{align}
\lim_{k\rightarrow\infty}\left(  R_{2^{k}+1}-R_{2^{k}}\right)   &  =\sqrt
{2}\lim_{k\rightarrow\infty}\left(  \frac{D^{k}-D^{k-1}}{2^{k-1}}\right)
\label{s4}\\
&  =\sqrt{2}\left(  D-1\right)  \lim_{k\rightarrow\infty}\left(  \frac{D}%
{2}\right)  ^{k-1}\nonumber\\
&  =0.\nonumber
\end{align}
Hence
\[
\lim_{n\rightarrow\infty}R_{n+1}-R_{n}=0.
\]

Next, let us estimate the difference $R_{n+1}-R_{n}.$ Let $k$ be such that
$n\in B_{k};$ we thus have
\[
2^{k-1}+1\leq n\leq2^{k}%
\]
and
\[
\log_{2}\left(  \frac{n}{2}\right)  \leq\log_{2}\left(  2^{k-1}\right)  =k-1.
\]
Using again that $D<2$ we conclude that
\[
R_{n+1}-R_{n}\leq\left(  \frac{D}{2}\right)  ^{k-1}\sqrt{2}\left(  D-1\right)
\leq\left(  \frac{D}{2}\right)  ^{\log_{2}\left(  \frac{n}{2}\right)  }%
\sqrt{2}\left(  D-1\right)
\]

and a direct calculation gives us
\[
R_{n+1}-R_{n}\leq\left(  2\sqrt{2}-4e^{\frac{1}{2}\gamma-1}\right)
n^{\log_{2}\left(  2^{-3/2}e^{1-\frac{1}{2}\gamma}\right)  }.
\]

\end{proof}

As we know, the constants defined in \eqref{wwwd} are slightly bigger than the
constants from \eqref{yt}, \eqref{di11}; but we stress that there seems to be
no damage, asymptotically speaking. More precisely, the limits of $\left(
\frac{R_{2n}}{R_{n}}\right)  _{n=1}^{\infty}$ and $\left(  \frac{R_{n+1}%
}{R_{n}}\right)  _{n=1}^{\infty}$ are exactly the same of $\left(
\frac{C_{2n}}{C_{n}}\right)  _{n=1}^{\infty}$ and $\left(  \frac{C_{n+1}%
}{C_{n}}\right)  _{n=1}^{\infty}$ (see also the paragraph immediately above
the Corollary \ref{nnhh}):

\begin{proposition}
The sequence $\left(  \frac{R_{2n}}{R_{n}}\right)  _{n=1}^{\infty}$ is
decreasing and
\begin{equation}
\lim_{n\rightarrow\infty}\frac{R_{2n}}{R_{n}}=\left(  \frac{e^{1-\frac{1}%
{2}\gamma}}{\sqrt{2}}\right)  . \label{gn}%
\end{equation}
Also
\begin{equation}
\lim_{n\rightarrow\infty}\frac{R_{n+1}}{R_{n}}=1. \label{uuu}%
\end{equation}

\end{proposition}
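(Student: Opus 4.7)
The plan is to exploit the explicit definition of $R_n$ on each dyadic block $B_k$ to write $R_{2n}/R_n$ as an elementary rational function, and then read off both monotonicity and limit from that closed form. For $n\in B_k$, I set $j:=j_n=n-2^{k-1}\in\{1,\ldots,2^{k-1}\}$ and $\beta_k:=(D-1)/2^{k-1}$, so that by \eqref{wwwd}
\[
R_n \;=\; \sqrt{2}\, D^{k-1}\bigl(1 + (j-1)\beta_k\bigr).
\]
Since $2n = 2^k + 2j$ lies in $B_{k+1}$ with $j_{2n}=2j$, and $\beta_{k+1}=\beta_k/2$, a direct computation gives
\[
\frac{R_{2n}}{R_n} \;=\; D\cdot\frac{1 + (2j-1)\beta_{k+1}}{1 + (2j-2)\beta_{k+1}} \;=\; D\Bigl(1 + \frac{\beta_{k+1}}{1 + (2j-2)\beta_{k+1}}\Bigr).
\]
This explicit form is the central object of the proof.

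Next I would establish strict monotonicity of $(R_{2n}/R_n)_{n\geq 2}$ in two parts. Within a fixed block $B_k$, i.e.\ for $j$ increasing with $k$ held fixed, the denominator $1+(2j-2)\beta_{k+1}$ strictly increases, so the displayed ratio strictly decreases. At a block boundary I compare the value at $n=2^k$ (where $j=2^{k-1}$, block index $k$) with the value at $n+1=2^k+1$ (where $j=1$, block index $k+1$); after elementary cancellation the required strict inequality reduces to $2 > D - 2(D-1)/2^k$, which is immediate from $D<2$. Combining the two cases yields the desired strict monotonicity.

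For the limit \eqref{gn} I use the uniform two-sided bound $1 \leq \frac{R_{2n}}{D\cdot R_n} \leq 1 + \beta_{k+1}$, valid for every $n\in B_k$. As $n\to\infty$ we have $k(n)\to\infty$ (because $n\leq 2^{k(n)}$), hence $\beta_{k+1}=(D-1)/2^k\to 0$, and the squeeze gives $R_{2n}/R_n\to D=e^{1-\gamma/2}/\sqrt{2}$. For \eqref{uuu} I appeal to the Fundamental Lemma (Theorem~\ref{t4}), which already provides $R_{n+1}-R_n\to 0$, together with the bound $R_n \geq M_n = \sqrt{2}\,D^{k(n)-1}\to\infty$ (since $D>1$); writing $R_{n+1}/R_n = 1 + (R_{n+1}-R_n)/R_n$ then forces the quotient to tend to $1$.

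The only mildly subtle step is the block-boundary comparison in the monotonicity argument: within a single block the monotonicity is transparent from the algebraic form, but at the transition $n=2^k\to 2^k+1$ the relevant expressions for $R_n$ and $R_{n+1}$ have different shapes and must be matched up by hand. The key enabling fact is the sharp bound $D<2$ guaranteed by Lemma~\ref{lema1}, which is precisely the same ingredient that drove the proof of Theorem~\ref{t4}.
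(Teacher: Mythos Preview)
Your proof is correct and follows essentially the same algebraic route as the paper: express $R_n$ and $R_{2n}$ explicitly on the dyadic blocks $B_k$ and read off the ratio. The paper in fact omits the monotonicity argument (``essentially straightforward and we omit''), so your within-block/across-boundary case split, driven by $D<2$, supplies exactly the missing details; for the limit \eqref{gn} the paper instead computes a single subsequence ($j_{2n}=2$) and then invokes the monotonicity, whereas your two-sided squeeze $1\le R_{2n}/(D R_n)\le 1+\beta_{k+1}$ is a slightly cleaner and logically independent alternative.
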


\begin{proof}
The proof that $\left(  \frac{R_{2n}}{R_{n}}\right)  _{n=1}^{\infty}$ is
decreasing needs some care with the details, but is essentially
straightforward and we omit.

Let $k$ be so that $2n\in B_{k};$ then $j_{2n}$ is even. Also, we have $n\in
B_{k-1}$ and note that $j_{n}=\frac{j_{2n}}{2}$.$\ $\ Hence
\[
\frac{R_{2n}}{R_{n}}=\frac{\sqrt{2}\left(  D^{k-1}+\left(  j_{2n}-1\right)
\left(  \frac{D^{k}-D^{k-1}}{2^{k-1}}\right)  \right)  }{\sqrt{2}\left(
D^{k-2}+\left(  \frac{j_{2n}}{2}-1\right)  \left(  \frac{D^{k-1}-D^{k-2}%
}{2^{k-2}}\right)  \right)  }.
\]
Considering the subsequence given for $j_{2n}=2$ we have
\begin{align*}
\frac{D^{k-1}+\left(  2-1\right)  \left(  \frac{D^{k}-D^{k-1}}{2^{k-1}%
}\right)  }{D^{k-2}+\left(  1-1\right)  \left(  \frac{D^{k-1}-D^{k-2}}%
{2^{k-2}}\right)  }  &  =\frac{D^{k-1}+\left(  \frac{D^{k}-D^{k-1}}{2^{k-1}%
}\right)  }{D^{k-2}}\\
&  =\frac{2^{k-1}D^{k-1}+D^{k}-D^{k-1}}{2^{k-1}D^{k-2}}\\
&  =\frac{D^{k-2}\left(  2^{k-1}D+D^{2}-D\right)  }{2^{k-1}D^{k-2}}\\
&  =\frac{2^{k-1}D+D^{2}-D}{2^{k-1}}\overset{k\rightarrow\infty}%
{\longrightarrow}D.
\end{align*}
Combining this fact with the monotonicity of the sequence we obtain \eqref{gn}. The proof of \eqref{uuu} is obvious.
\end{proof}

\section{Main results: optimal constants\label{occ}}

In this section $\left(  R_{n}\right)  _{n=1}^{\infty}$ denotes the sequence defined in \eqref{wwwd}. As a consequence of Theorem \ref{t4} we have some new information on the growth of the optimal constants satisfying the multilinear Bohnenblust--Hille inequality. The first result complements (although not formally generalizes) recent results from \cite{ap}:

\begin{theorem}
\label{56}Let $\left(  K_{n}\right)  _{n=1}^{\infty}$ be the sequence of the
optimal constants satisfying the multilinear Bohnenblust--Hille inequality. If
there is a constant $M\in\lbrack-\infty,\infty]$ so that%
\[
\lim_{n\rightarrow\infty}\left(  K_{n+1}-K_{n}\right)  =M
\]
then $M=0$.
\end{theorem}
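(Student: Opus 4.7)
The plan is to use the Fundamental Lemma (Theorem \ref{t4}) as an upper envelope for the optimal constants and then rule out each nonzero value of the putative limit $M$ by a simple sum/growth argument. Since $(R_n)_{n=1}^\infty$ satisfies the multilinear Bohnenblust--Hille inequality and $(K_n)_{n=1}^\infty$ consists of the \emph{optimal} such constants, we immediately have $K_n\le R_n$ for every $n$. Also, because any constant in the Bohnenblust--Hille inequality must lie in $[1,\infty)$, we have $K_n\ge 1$ for every $n$.

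Assume now that $L:=\lim_{n\to\infty}(K_{n+1}-K_n)$ exists in $[-\infty,\infty]$ and suppose, for contradiction, that $L\neq 0$. First I would dispose of the case $L<0$ (including $L=-\infty$): then for some $\varepsilon>0$ and all large $n$ one has $K_{n+1}-K_n<-\varepsilon$, so telescoping forces $K_n\to-\infty$, contradicting $K_n\ge1$. Next I would treat the case $L>0$ (including $L=+\infty$): fix any $\delta\in(0,L)$ if $L<\infty$, or any $\delta>0$ if $L=\infty$. There exists $N$ so that $K_{n+1}-K_n>\delta$ for all $n\ge N$, and telescoping gives
\[
K_n\;\ge\;K_N+(n-N)\delta\qquad(n\ge N),
\]
so $\liminf_{n\to\infty} K_n/n\ge\delta>0$.

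The contradiction will come from the fact that $R_n$ grows strictly sublinearly. By Theorem \ref{t4}, $R_{n+1}-R_n\to 0$, so the Stolz--Cesàro theorem (applied to $R_n=R_1+\sum_{j=1}^{n-1}(R_{j+1}-R_j)$) yields $R_n/n\to 0$. Combining with $K_n\le R_n$ gives $\limsup_n K_n/n\le\lim_n R_n/n=0$, contradicting the previous inequality. (Alternatively, I could invoke the stronger quantitative bound $R_{n+1}-R_n\le 0.87/n^{0.473678}$, which yields $R_n=O(n^{0.527})=o(n)$, but the soft Cesàro argument is cleaner and all that is needed here.)

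The main obstacle is essentially none: the real work is already encapsulated in the Fundamental Lemma, and the theorem becomes a direct two-case argument once $R_n=o(n)$ is established. The only small point requiring care is the handling of the extended-real endpoints $L=\pm\infty$, which I would treat uniformly by choosing an auxiliary positive threshold $\delta$ as above rather than passing to limits. This completes the proof that $L$, if it exists, must equal $0$.
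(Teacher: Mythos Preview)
Your proof is correct and follows essentially the same approach as the paper: both arguments use $K_n\le R_n$ from the Fundamental Lemma, dispose of negative limits via $K_n\ge 1$, and derive a contradiction in the positive case by comparing the (at least) linear growth forced on $K_n$ by telescoping against the sublinear growth of $R_n$ coming from $R_{n+1}-R_n\to 0$. The paper carries out this comparison by directly telescoping both sequences from a common index, whereas you package the sublinearity of $R_n$ via Stolz--Ces\`aro to get $R_n/n\to 0$; this is a cosmetic difference only.
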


\begin{proof}
The case $M\in\lbrack-\infty,0)$ is clearly not possible. Let us first suppose
that $M\in\left(  0,\infty\right)  $. Let $n_{0}$ be a positive integer so
that
\[
n\geq n_{0}\Rightarrow K_{n+1}-K_{n}>\frac{M}{2}%
\]
and $n_{1}$ be a positive integer so that
\[
n\geq n_{1}\Rightarrow R_{n+1}-R_{n}<\frac{M}{4}.
\]
So, if $n\geq n_{2}:=\max\{n_{1},n_{0}\},$ then%
\[
K_{n}-K_{n_{2}}>\left(  \frac{M}{2}\right)  \left(  n-n_{2}\right)
\]
and%
\[
R_{n}-R_{n_{2}}<\left(  \frac{M}{4}\right)  \left(  n-n_{2}\right)  .
\]
Let $N>n_{2}$ be so that%
\[
\left(  \frac{M}{2}\right)  \left(  N-n_{2}\right)  +K_{n_{2}}>R_{n_{2}%
}+\left(  \frac{M}{4}\right)  \left(  N-n_{2}\right)  .
\]
Note that this is possible since%
\[
\left(  \frac{M}{2}\right)  \left(  n-n_{2}\right)  -\left(  \frac{M}%
{4}\right)  \left(  n-n_{2}\right)  \rightarrow\infty.
\]
For this $N$ we have%
\[
K_{N}>\left(  \frac{M}{2}\right)  \left(  N-n_{2}\right)  +K_{n_{2}}>R_{n_{2}%
}+\left(  \frac{M}{4}\right)  \left(  N-n_{2}\right)  >R_{N},
\]
which is a contradiction. The case $M=\infty$ is a simple adaptation of the
previous case.
\end{proof}

Now we prove a result that can be considered the main theorem of this paper:

\begin{theorem}
\label{uty}Let $\left(  K_{n}\right)  _{n=1}^{\infty}$ be the optimal
constants satisfying the multilinear Bohnenblust--Hille constants. For any
$\varepsilon>0$, we have
\begin{equation}
K_{n+1}-K_{n}<\left(  2\sqrt{2}-4e^{\frac{1}{2}\gamma-1}\right)  n^{\log
_{2}\left(  2^{-3/2}e^{1-\frac{1}{2}\gamma}\right)  +\varepsilon} \label{qz}%
\end{equation}
for infinitely many $n$'s.
\end{theorem}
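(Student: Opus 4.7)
The plan is to argue by contradiction, leveraging the Fundamental Lemma (Theorem \ref{t4}) together with the obvious inequality $K_n\le R_n$ for all $n$. Write $C:=2\sqrt{2}-4e^{\frac{1}{2}\gamma-1}$ and $\alpha:=\log_2\!\left(2^{-3/2}e^{1-\frac{1}{2}\gamma}\right)$, and note that $\alpha\in(-1,0)$ (numerically $\alpha\approx-0.4737$). Suppose, toward a contradiction, that for some fixed $\varepsilon>0$ the asserted inequality fails for all but finitely many $n$. Then there exists $n_{0}$ so that
\[
K_{n+1}-K_{n}\ge C\,n^{\alpha+\varepsilon}\qquad(n\ge n_{0}).
\]

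Telescoping this lower bound from $n_{0}$ up to $N-1$ and comparing to the integral $\int n^{\alpha+\varepsilon}\,dn$ (which is legitimate because $\alpha+\varepsilon>-1$) yields
\[
K_{N}\;\ge\;K_{n_{0}}+C\sum_{n=n_{0}}^{N-1}n^{\alpha+\varepsilon}\;\ge\;K_{n_{0}}+c_{1}\,N^{1+\alpha+\varepsilon}
\]
for some constant $c_{1}>0$ and all sufficiently large $N$. On the other hand, by Theorem \ref{t4} we have $R_{n+1}-R_{n}\le C\,n^{\alpha}$ for every $n$, so the analogous telescoping-plus-integral-comparison argument (using $\alpha>-1$) gives
\[
R_{N}\;\le\;R_{n_{0}}+C\sum_{n=n_{0}}^{N-1}n^{\alpha}\;\le\;R_{n_{0}}+c_{2}\,N^{1+\alpha}
\]
for some constant $c_{2}>0$ and all large $N$. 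Since $(R_{n})_{n=1}^{\infty}$ satisfies the multilinear Bohnenblust--Hille inequality, the optimality of $K_{N}$ forces $K_{N}\le R_{N}$.

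Combining the three displays produces
\[
K_{n_{0}}+c_{1}\,N^{1+\alpha+\varepsilon}\;\le\;R_{n_{0}}+c_{2}\,N^{1+\alpha}
\]
for all large $N$, which is absurd because $\varepsilon>0$ makes the left-hand side dominate as $N\rightarrow\infty$. This contradiction closes the argument. The only substantive step is the integral-comparison estimate $\sum_{n=n_{0}}^{N-1}n^{s}=\Theta(N^{1+s})$ for $s\in(-1,\infty)$, which is standard; I expect no real obstacles, the only delicate point being to observe at the outset that $\alpha>-1$ so that both series in question diverge at the appropriate polynomial rate and the exponents $1+\alpha+\varepsilon$ and $1+\alpha$ genuinely separate.
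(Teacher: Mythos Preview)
Your proof is correct and follows essentially the same route as the paper: assume the inequality fails for all large $n$, telescope both $K_N$ and $R_N$ from some $n_0$, and use $K_N\le R_N$ to reach a contradiction between the growth rates of the two telescoped sums. The only cosmetic difference is that the paper keeps the partial sums $T_n=1+C\sum_{j=1}^{n-1}j^{\alpha+\varepsilon}$ as they are and observes directly that $R_{N+1}-T_{N+1}\le C\sum_{j=1}^{N}(j^{\alpha}-j^{\alpha+\varepsilon})\to-\infty$, whereas you pass through the integral comparison to convert both sums into explicit powers $N^{1+\alpha+\varepsilon}$ and $N^{1+\alpha}$ before comparing.
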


\begin{proof}
From the previous results we know that
\[
R_{n+1}-R_{n}\leq\left(  2\sqrt{2}-4e^{\frac{1}{2}\gamma-1}\right)
n^{\log_{2}\left(  2^{-3/2}e^{1-\frac{1}{2}\gamma}\right)  }%
\]
for all $n$. Summing the above inequalities it is plain that
\begin{equation}
R_{n}\leq1+\left(  2\sqrt{2}-4e^{\frac{1}{2}\gamma-1}\right)  {\textstyle\sum
\limits_{j=1}^{n-1}}j^{\log_{2}\left(  2^{-3/2}e^{1-\frac{1}{2}\gamma}\right)
}. \label{iiu}%
\end{equation}
If $\varepsilon>0$, let us define
\[
T_{n}=1+\left(  2\sqrt{2}-4e^{\frac{1}{2}\gamma-1}\right)  {\textstyle\sum
\limits_{j=1}^{n-1}}j^{\log_{2}\left(  2^{-3/2}e^{1-\frac{1}{2}\gamma}\right)
+\varepsilon}.
\]
Then
\[
T_{n+1}-T_{n}=\left(  2\sqrt{2}-4e^{\frac{1}{2}\gamma-1}\right)  n^{\log
_{2}\left(  2^{-3/2}e^{1-\frac{1}{2}\gamma}\right)  +\varepsilon}%
\]
It is simple to show that the set
\[
A_{\varepsilon}:=\left\{  n:K_{n+1}-K_{n}<T_{n+1}-T_{n}\right\}
\]
is infinite. In fact, if $A_{\varepsilon}$ was finite, let $n_{\varepsilon}$
be its minimum. So, for all $n>n_{\varepsilon}$ we would have
\[
K_{n+1}-K_{n}\geq T_{n+1}-T_{n}.
\]
Also, for any $N>n_{\varepsilon}+1$, summing both sides for $n=n_{\varepsilon
}+1$ to $n=N,$ we have
\[
K_{N+1}-K_{n_{\varepsilon}+1}\geq T_{N+1}-T_{n_{\varepsilon}+1}.
\]
We finally obtain
\[
K_{N+1}-T_{N+1}\geq K_{n_{\varepsilon}+1}-T_{n_{\varepsilon}+1}%
\]
and it is a contradiction, since
\begin{align*}
&  K_{N+1}-T_{N+1}<R_{N+1}-T_{N+1}\leq\\
&  \leq\left(  2\sqrt{2}-4e^{\frac{1}{2}\gamma-1}\right)  {\textstyle\sum
\limits_{j=1}^{N}}j^{\log_{2}\left(  2^{-3/2}e^{1-\frac{1}{2}\gamma}\right)
}-\left(  2\sqrt{2}-4e^{\frac{1}{2}\gamma-1}\right)  {\textstyle\sum
\limits_{j=1}^{N}}j^{\log_{2}\left(  2^{-3/2}e^{1-\frac{1}{2}\gamma}\right)
+\varepsilon}%
\end{align*}
and this last expression tends to $-\infty$.
\end{proof}

Estimating the values in \eqref{qz} and choosing a sufficiently small $\varepsilon>0$ we can assert that
\[
K_{n+1}-K_{n}<\frac{0.87}{n^{0.473678}}%
\]
for infinitely many integers $n$. It seems quite likely that the optimal
constants of the multilinear Bohnenblust--Hille inequality have an uniform
growth. The above theorem induces us to conjecture that the estimate holds for
all $n$.

\begin{corollary}
\label{t6}The optimal multilinear Bohnenblust--Hille constants $\left(
K_{n}\right)  _{n=1}^{\infty}$ satisfy
\[
\lim_{n}\inf\left(  K_{n+1}-K_{n}\right)  \leq0.
\]

\end{corollary}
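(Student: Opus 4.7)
The plan is to deduce the corollary as an immediate consequence of Theorem \ref{uty}. The key observation is that the exponent $\log_{2}(2^{-3/2} e^{1-\gamma/2})$ appearing in Theorem \ref{uty} is strictly negative. Indeed, $2^{-3/2} e^{1-\gamma/2} = D/2$, where $D = e^{1-\gamma/2}/\sqrt{2} \approx 1.4403 < 2$, so $D/2 < 1$ and hence $\log_{2}(D/2) < 0$. Consequently, if one chooses $\varepsilon > 0$ small enough that $\alpha := \log_{2}(D/2) + \varepsilon$ remains negative (for instance $\varepsilon = |\log_{2}(D/2)|/2$), Theorem \ref{uty} supplies an infinite set $S \subset \mathbb{N}$ such that
\[
K_{n+1} - K_{n} < \bigl(2\sqrt{2} - 4 e^{\gamma/2 - 1}\bigr)\, n^{\alpha} \qquad \text{for every } n \in S,
\]
and the right-hand side tends to $0$ as $n \to \infty$ because $\alpha < 0$.

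From here a standard $\liminf$ argument closes the proof. Given any $\delta > 0$, I would pick $N$ so that $\bigl(2\sqrt{2} - 4 e^{\gamma/2 - 1}\bigr) n^{\alpha} < \delta$ for all $n \geq N$. Since $S$ is infinite, there exists $n \in S$ with $n \geq N$, and for that $n$ we get $K_{n+1} - K_{n} < \delta$. This exhibits arbitrarily large indices at which the gap $K_{n+1} - K_{n}$ is smaller than $\delta$, so $\liminf_{n\to\infty}(K_{n+1} - K_{n}) \leq \delta$. Letting $\delta \to 0^{+}$ yields $\liminf_{n\to\infty}(K_{n+1} - K_{n}) \leq 0$.

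There is essentially no obstacle: the corollary is a short consequence of Theorem \ref{uty}. The only point requiring attention is to verify that the exponent $\log_{2}(2^{-3/2} e^{1-\gamma/2})$ is genuinely negative, which, as noted above, is nothing more than the inequality $D < 2$ already exploited in the proof of the Fundamental Lemma. Note that the argument does not require $K_{n+1} - K_{n} \geq 0$ for any particular $n$; should some differences happen to be negative, that only reinforces the conclusion $\liminf \leq 0$.
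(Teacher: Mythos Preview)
Your proof is correct and follows essentially the same approach as the paper: the corollary is presented there as an immediate consequence of Theorem~\ref{uty} (indeed, the paper states it without proof right after observing that $K_{n+1}-K_{n}<0.87/n^{0.473678}$ for infinitely many $n$). Your explicit verification that the exponent is negative and the standard $\liminf$ argument simply spell out what the paper leaves implicit.
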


The following straightforward consequence of \eqref{iiu} seems to be of independent interest:

\begin{theorem}
\label{tt55}The optimal multilinear Bohnenblust--Hille constants $\left(
K_{n}\right)  _{n=1}^{\infty}$ satisfy
\[
K_{n}<1+0.87\cdot\sum_{j=1}^{n-1}\frac{1}{j^{0.473678}}%
\]
for every $n\geq2$.
\end{theorem}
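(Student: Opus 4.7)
The plan is to reduce the claim for the optimal constants $(K_n)$ to an estimate on the explicit sequence $(R_n)$ built in Section \ref{w4}, and then to telescope the differences $R_{n+1}-R_n$ using the Fundamental Lemma. First, by Theorem \ref{t4} the sequence $(R_n)_{n=1}^{\infty}$ satisfies the multilinear Bohnenblust--Hille inequality; since $(K_n)_{n=1}^{\infty}$ is the sequence of best constants, one automatically has
\[
K_n \leq R_n \quad \text{for every } n \geq 1.
\]
Hence it suffices to prove the analogous bound for $R_n$ in place of $K_n$.

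Next, I would telescope. From the definition \eqref{wwwd} one checks immediately that $R_1 = 1$, so
\[
R_n \;=\; R_1 + \sum_{j=1}^{n-1}\bigl(R_{j+1}-R_j\bigr) \;=\; 1 + \sum_{j=1}^{n-1}\bigl(R_{j+1}-R_j\bigr).
\]
Now the Fundamental Lemma (Theorem \ref{t4}, estimate \eqref{rhs}) provides
\[
R_{j+1}-R_j \;\leq\; \bigl(2\sqrt{2}-4e^{\gamma/2-1}\bigr)\, j^{\log_2\!\left(2^{-3/2}e^{1-\gamma/2}\right)}
\]
for every positive integer $j$, which substituted above yields
\[
R_n \;\leq\; 1 + \bigl(2\sqrt{2}-4e^{\gamma/2-1}\bigr)\sum_{j=1}^{n-1} j^{\log_2\!\left(2^{-3/2}e^{1-\gamma/2}\right)}.
\]
This is precisely inequality \eqref{iiu}, which already appears in the proof of Theorem \ref{uty}, so the analytic work has effectively been done.

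Finally, I would finish with the purely numerical verification that
\[
2\sqrt{2}-4e^{\gamma/2-1} \;<\; 0.87 \qquad \text{and} \qquad \log_2\!\left(2^{-3/2}e^{1-\gamma/2}\right) \;<\; -0.473678,
\]
using the value $\gamma \approx 0.5772$. Together with $K_n \leq R_n$, this gives
\[
K_n \;<\; 1 + 0.87\cdot\sum_{j=1}^{n-1}\frac{1}{j^{0.473678}}
\]
for every $n\geq 2$, as required.

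There is no serious obstacle left: the analytic content (monotonicity of the relevant Gamma-function expressions, the bound $D<2$, the telescoping estimate) has all been absorbed into Theorem \ref{t4}. The only mild care needed is verifying the two numerical inequalities above with strict signs, so that the substitution of the tighter constants $0.87$ and $0.473678$ remains valid; since both approximations have non-trivial slack relative to the exact values $2\sqrt{2}-4e^{\gamma/2-1}$ and $|\log_2(2^{-3/2}e^{1-\gamma/2})|$, this is routine.
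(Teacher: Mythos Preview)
Your proposal is correct and follows exactly the approach the paper itself indicates: Theorem \ref{tt55} is stated as a ``straightforward consequence of \eqref{iiu}'', and you have simply unpacked that remark---using $K_n\le R_n$, telescoping from $R_1=1$, applying the Fundamental Lemma bound \eqref{rhs}, and then inserting the numerical values. The only cosmetic point is that $R_1=1$ is not literally read off from formula \eqref{wwwd} (which covers $n\ge 2$) but is the natural convention consistent with $M_1=S_1=C_1=1$ and with the paper's own derivation of \eqref{iiu}.
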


We recall that in \cite{ap}, one of the consequences of the main theorem is
that
\begin{equation}
K_{n}\nsim n^{r}\text{ for all }r>q:=\log_{2}\left(  \frac{e^{1-\frac{\gamma
}{2}}}{\sqrt{2}}\right)  . \label{998}%
\end{equation}

The fact that our \textquotedblleft perturbation argument\textquotedblright%
\ does not cause any asymptotic damage is strongly corroborated by the
following generalization of \eqref{998}; note that the power of $n-1$ in \eqref{999} is exactly the number $q$ in \eqref{998}, although the approaches
are completely different:

\begin{corollary}
\label{nnhh}Let%
\[
C_{0}:=1+\left(  2^{\frac{3}{2}}-4e^{\frac{\gamma}{2}-1}\right)  \left(
\frac{2^{-1/2}e^{1-\frac{\gamma}{2}}}{2^{-1}-\log_{2}e^{1-\frac{\gamma}{2}}%
}+\left(  1+2^{\frac{-3}{2}}e^{1-\frac{\gamma}{2}}\right)  \right)
\approx0.122.
\]
The optimal multilinear Bohnenblust--Hille constants satisfy%
\begin{equation}
K_{n}<\left(  \frac{2^{\frac{5}{2}}-8e^{-1+\frac{\gamma}{2}}}{2\log_{2}\left(
e^{1-\frac{\gamma}{2}}\right)  -1}\right)  \left(  n-1\right)  ^{\log
_{2}\left(  \frac{e^{1-\frac{\gamma}{2}}}{\sqrt{2}}\right)  }+C_{0}
\label{999}%
\end{equation}
for all $n\geq2$. Numerically,%
\begin{equation}
K_{n}<1.65\left(  n-1\right)  ^{0.526322}+0.13. \label{y68}%
\end{equation}

\end{corollary}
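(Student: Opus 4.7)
The plan is to start from the pre-numerical estimate \eqref{iiu} proved in Theorem \ref{uty}:
$$K_n \le R_n \le 1 + A\sum_{j=1}^{n-1} j^{\alpha-1}, \qquad A := 2\sqrt{2}-4e^{\gamma/2-1},\quad \alpha := \log_2\!\left(\tfrac{e^{1-\gamma/2}}{\sqrt{2}}\right),$$
so that $\alpha - 1 = \log_2(2^{-3/2}e^{1-\gamma/2}) \in (-1,0)$. Note $\alpha$ is precisely the exponent appearing in \eqref{999}, and the target is a closed-form bound of shape $C_0 + (A/\alpha)(n-1)^\alpha$.

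The natural tool is integral comparison: because $x \mapsto x^{\alpha-1}$ is positive and strictly decreasing on $(0,\infty)$, we have $j^{\alpha-1} \le \int_{j-1}^{j} x^{\alpha-1}\,dx$ for every $j \ge 2$. A crude use of this estimate (keeping only $j=1$ aside) would already yield $(n-1)^\alpha/\alpha$ with the correct leading coefficient, but an oversized constant term of about $0.22$. To recover the sharper $C_0 \approx 0.122$ stated, I would peel off \emph{both} $j=1$ and $j=2$ before integrating, obtaining
$$\sum_{j=1}^{n-1} j^{\alpha-1} \;\le\; 1 + 2^{\alpha-1} + \int_2^{n-1} x^{\alpha-1}\,dx \;=\; 1 + 2^{\alpha-1} + \frac{(n-1)^\alpha - 2^\alpha}{\alpha}$$
for $n \ge 3$. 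The crucial algebraic observation is the identity $2^\alpha = e^{1-\gamma/2}/\sqrt{2}$ (so $2^{\alpha-1}= e^{1-\gamma/2}/(2\sqrt{2})$), which turns the coefficient of $(n-1)^\alpha$ into exactly $A/\alpha = (2^{5/2}-8e^{\gamma/2-1})/(2\log_2 e^{1-\gamma/2}-1)$; the remaining constant $1 + A\bigl(1 + 2^{\alpha-1} - 2^{\alpha}/\alpha\bigr)$ then rewrites, using $-\alpha = 2^{-1} - \log_2 e^{1-\gamma/2}$, into the ornate expression for $C_0$ displayed in the statement.

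The case $n = 2$ falls outside this integral argument (the sum contains only $j=1$), but is trivial: from \eqref{wwwd} one has $R_2 = \sqrt{2} < 1.65 + 0.13$, so $K_2 \le R_2$ satisfies the claimed inequality. The numerical consequence \eqref{y68} follows at once from \eqref{999} by substituting $\alpha \approx 0.526322$, $A/\alpha < 1.65$, and $C_0 < 0.13$. The only real obstacle is purely bookkeeping: one must verify that peeling exactly two initial terms (not one, not three) delivers the advertised $C_0$, and must patiently match the unusual presentation of $C_0$ in the statement (featuring $2^{-1/2}e^{1-\gamma/2}$, $2^{-3/2}e^{1-\gamma/2}$, and $2^{-1}-\log_2 e^{1-\gamma/2}$) with the natural byproduct $1 + A\bigl(1 + 2^{\alpha-1} - 2^{\alpha}/\alpha\bigr)$ of the integral estimate.
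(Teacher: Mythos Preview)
Your proposal is correct and matches the paper's proof essentially line for line: the paper also writes $p=-\log_{2}(2^{-3/2}e^{1-\gamma/2})$ (your $1-\alpha$), peels off exactly the terms $j=1$ and $j=2$, and bounds the remaining sum by $\int_{2}^{n-1}x^{-p}\,dx$ to obtain the same constant term $1+A\bigl(1+2^{-p}+\tfrac{2^{1-p}}{p-1}\bigr)$. Your explicit treatment of the case $n=2$ is in fact slightly more careful than the paper, which silently restricts to $n\geq 3$ in the displayed estimate and then asserts the general conclusion.
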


\begin{proof}
Recall that%
\[
K_{n}<1+\left(  2\sqrt{2}-4e^{\frac{1}{2}\gamma-1}\right)  {\sum
\limits_{j=1}^{n-1}}j^{\log_{2}\left(  2^{-3/2}e^{1-\frac{1}{2}\gamma}\right)
}%
\]
For the sake of simplicity, let us write%
\[
p=-\log_{2}\left(  2^{-3/2}e^{1-\frac{1}{2}\gamma}\right)
\]
Note that, for $n\geq3,$ we can estimate ${\textstyle\sum\limits_{j=1}^{n-1}%
}\frac{1}{j^{p}}$ by
\begin{align*}
{\displaystyle\sum\limits_{j=1}^{n-1}}\frac{1}{j^{p}}  &  \leq
{\displaystyle\int\limits_{2}^{n-1}}x^{-p}dx+\left(  1+2^{-p}\right) \\
&  =\frac{1}{1-p}\left(  n-1\right)  ^{1-p}+\left(  \frac{2^{1-p}}%
{p-1}+\left(  1+2^{-p}\right)  \right)  .
\end{align*}
We thus have
\[
K_{n}<1+\left(  2\sqrt{2}-4e^{\frac{1}{2}\gamma-1}\right)  \left(  \frac
{1}{1-p}\left(  n-1\right)  ^{1-p}+\left(  \frac{2^{1-p}}{p-1}+\left(
1+2^{-p}\right)  \right)  \right)
\]
and a simple calculation gives us \eqref{999} and \eqref{y68}.
\end{proof}

\section{The complex case: When $\pi, e$ and $\gamma$ meet \label{complex}}

For complex scalars the best known constants satisfying the multilinear
Bohnenblust--Hille inequality are presented in \cite{ap2} by the formula%
\[
\widetilde{C}_{n}=\left\{
\begin{array}
[c]{ll}%
1 & \text{ if }n=1,\\
\left(  \left(  \widetilde{A_{\frac{2n}{n+2}}}\right)  ^{n/2}\right)
^{-1}\widetilde{C}_{\frac{n}{2}} & \text{ if }n\text{ is even, and }\\
\left(  \left(  \widetilde{A_{\frac{2n-2}{n+1}}}\right)  ^{\frac{-1-n}{2}%
}\widetilde{C}_{\frac{n-1}{2}}\right)  ^{\frac{n-1}{2n}}\left(  \left(
\widetilde{A_{\frac{2n+2}{n+3}}}\right)  ^{\frac{1-n}{2}}\widetilde{C}%
_{\frac{n+1}{2}}\right)  ^{\frac{n+1}{2n}} & \text{ if }n\text{ is odd,}%
\end{array}
\right.
\]
where%
\[
\widetilde{A_{p}}=\left(  \Gamma\left(  \frac{p+2}{2}\right)  \right)
^{\frac{1}{p}}.
\]

A similar procedure (using \cite[Theorem 2]{qi}) of that from Section \ref{w3}
proves that the sequence $\left(  \left(  \widetilde{A_{\frac{2m}{m+2}}%
}\right)  ^{-m/2}\right)  _{m=1}^{\infty}$ is increasing. We just need to use
$s=\frac{2m}{m+2}$ and $r=2$. In particular we conclude that
\[
\widetilde{C}_{2m}\leq\left(  e^{\frac{1}{2}-\frac{1}{2}\gamma}\right)
\widetilde{C}_{m}%
\]
for all $m.$ Also, still imitating the arguments from Section \ref{w3} we
prove that the sequence
\[
\left(  \left(  \left(  \widetilde{A_{\frac{2n-2}{n+1}}}\right)  ^{\frac
{-1-n}{2}}\widetilde{C}_{\frac{n-1}{2}}\right)  ^{\frac{n-1}{2n}}\left(
\left(  \widetilde{A_{\frac{2n+2}{n+3}}}\right)  ^{\frac{1-n}{2}}\widetilde
{C}_{\frac{n+1}{2}}\right)  ^{\frac{n+1}{2n}}\right)  _{n=1}^{\infty}%
\]
is bounded by
\[
\widetilde{D}:=\left(  e^{\frac{1}{2}-\frac{1}{2}\gamma}\right)  .
\]

In a similar fashion of what we did in the previous sections we thus conclude
that the sequence
\[
\widetilde{S}_{n}=\left\{
\begin{array}
[c]{ll}%
\left(  \frac{2}{\sqrt{\pi}}\right)  ^{n-1} & \text{ if }n=1,2\\
\left(  e^{\frac{1}{2}-\frac{1}{2}\gamma}\right)  \widetilde{S}_{n/2} & \text{
for }n\text{ even,}\\
\left(  e^{\frac{1}{2}-\frac{1}{2}\gamma}\right)  \left(  \widetilde{S}%
_{\frac{n-1}{2}}\right)  ^{\frac{n-1}{2n}}\left(  \widetilde{S}_{\frac{n+1}%
{2}}\right)  ^{\frac{n+1}{2n}} & \text{ for }n\text{ odd}%
\end{array}
\right.
\]
is increasing and satisfies the Bohnenblust-Hille inequality. Now we define
\[
\widetilde{M}_{n}=\left\{
\begin{array}
[c]{ll}%
\left(  \frac{2}{\sqrt{\pi}}\right)  ^{n-1} & \text{ if }n=1,2\\
\left(  e^{\frac{1}{2}-\frac{1}{2}\gamma}\right)  \widetilde{M}_{\frac{n}{2}}
& \text{ if }n\text{ is even, and}\\
\left(  e^{\frac{1}{2}-\frac{1}{2}\gamma}\right)  \widetilde{M}_{\frac{n+1}%
{2}} & \text{ if }n\text{ is odd}%
\end{array}
\right.
\]
and it is plain that
\[
\widetilde{C}_{n}\leq\widetilde{S}_{n}\leq\widetilde{M}_{n}.
\]
Considering again
\[
B_{k}:=\{2^{k-1}+1,\ldots,2^{k}\}
\]
for all $k\geq1$, we have
\[
\widetilde{M}_{n}=\frac{2}{\sqrt{\pi}}\widetilde{D}^{k-1}\text{, if }n\in
B_{k}%
\]
and define the uniform perturbation of $\widetilde{M}_{n}$:
\begin{equation}
\widetilde{R}_{n}:=\frac{2}{\sqrt{\pi}}\left(  \widetilde{D}^{k-1}+\left(
j_{n}-1\right)  \left(  \frac{\widetilde{D}^{k}-\widetilde{D}^{k-1}}{2^{k-1}%
}\right)  \right)  ,\text{ } \label{wwwd2}%
\end{equation}
where $n\in B_{k}$ and $j_{n}$ is the position of $n$ in $B_{k}$. As in the
real case, we note that
\[
\widetilde{R}_{n+1}-\widetilde{R}_{n}=\frac{2}{\sqrt{\pi}}\left(
\frac{\widetilde{D}^{k}-\widetilde{D}^{k-1}}{2^{k-1}}\right)  .\text{ }%
\]
Since $\widetilde{D}<2$ we have
\[
\frac{\widetilde{D}^{k}-\widetilde{D}^{k-1}}{2^{k-1}}>\frac{\widetilde
{D}^{k+1}-\widetilde{D}^{k}}{2^{k}}%
\]
and $\left(  \widetilde{R}_{n+1}-\widetilde{R}_{n}\right)  _{n=1}^{\infty}$ is
decreasing. Besides%
\[
\lim_{n\rightarrow\infty}\left(  \widetilde{R}_{n+1}-\widetilde{R}_{n}\right)
=0
\]
and%
\[
\widetilde{R}_{n+1}-\widetilde{R}_{n}<0.44\cdot n^{-0.695025}%
\]
since
\begin{align*}
\widetilde{R}_{n+1}-\widetilde{R}_{n}  &  \leq\left(  \frac{\widetilde{D}}%
{2}\right)  ^{k-1}\frac{2}{\sqrt{\pi}}\left(  \widetilde{D}-1\right)
\leq\left(  \frac{\widetilde{D}}{2}\right)  ^{\log_{2}\left(  \frac{n}%
{2}\right)  }\frac{2}{\sqrt{\pi}}\left(  \widetilde{D}-1\right) \\
&  \leq\left(  \frac{4}{\sqrt{\pi}}-\frac{4}{e^{\frac{1}{2}-\frac{1}{2}\gamma
}\sqrt{\pi}}\right)  n^{\log_{2}\left(  \frac{e^{\frac{1}{2}-\frac{1}{2}%
\gamma}}{2}\right)  }\\
&  <0.44\cdot n^{-0.695025}.
\end{align*}
Thus we get%
\begin{equation}
\widetilde{R}_{n}\leq1+\left(  \frac{4}{\sqrt{\pi}}-\frac{4}{e^{\frac{1}%
{2}-\frac{1}{2}\gamma}\sqrt{\pi}}\right)  {\textstyle\sum\limits_{j=1}^{n-1}%
}j^{^{\log_{2}\left(  \frac{e^{\frac{1}{2}-\frac{1}{2}\gamma}}{2}\right)  }}
\label{iiu2}%
\end{equation}
for all $n\geq2$. Numerically%
\[
K_{n}<\widetilde{R}_{n}<1+0.44\cdot{\sum\limits_{j=1}^{n-1}}j^{-0.695025}%
\]
for all $n\geq2$. Proceeding as in Section \ref{occ} we obtain%
\begin{equation}
K_{n}<\frac{\left(  \frac{4}{\sqrt{\pi}}-\frac{4}{e^{\frac{1}{2}-\frac{1}%
{2}\gamma}\sqrt{\pi}}\right)  }{1+\log_{2}\left(  \frac{e^{\frac{1}{2}%
-\frac{1}{2}\gamma}}{2}\right)  }\left(  n-1\right)  ^{\log_{2}\left(
e^{\frac{1}{2}-\frac{1}{2}\gamma}\right)  }+C_{0} \label{unb}%
\end{equation}
with%
\[
C_{0}=\left(  \frac{2e^{\frac{1}{2}}-2e^{\frac{1}{2}\gamma}}{\sqrt{\pi}%
}\right)  \left(  \frac{-4e^{\frac{1}{2}}\ln2+\left(  1-\gamma\right)  \left(
e^{\frac{1}{2}}+2e^{\frac{1}{2}\gamma}\right)  }{e^{\frac{1}{2}\gamma+\frac
{1}{2}}\left(  1-\gamma\right)  }\right)  +1
\]
for all $n\geq2$. Numerically
\[
K_{n}<1.41\left(  n-1\right)  ^{0.304975}-0.04
\]
for all $n\geq2.$

We recall that in \cite{ap2} it is shown that
\begin{equation}
K_{n}\nsim n^{r}\text{ for all }r>q:=\log_{2}\left(  e^{\frac{1}{2}-\frac{1}{2}\gamma}\right). \label{q1q1}
\end{equation}
We remark that the power of $\left(  n-1\right)  $ in \eqref{unb} is precisely the value of $q$ in \eqref{q1q1}, showing that in this case our \textquotedblleft perturbation argument\textquotedblright\ also does not cause any asymptotic damage.

Finally, using the same argument of the previous section, for any $\varepsilon>0$, we have
\[
K_{n+1}-K_{n}<\left(  \frac{4}{\sqrt{\pi}}-\frac{4}{e^{\frac{1}{2}-\frac{1}{2}\gamma}\sqrt{\pi}}\right)  n^{\log_{2}\left(  \frac{e^{\frac{1}{2}-\frac{1}{2}\gamma}}{2}\right)  +\varepsilon}
\]
for infinitely many $n$'s.

\section{The Kahane--Salem--Zygmund \& Bohnenblust--Hille inequalities}

The Kahane--Salem--Zygmund inequality~(see \cite[Theorem 4, Chapter 6]{Kah}
and also \cite{sal}) is a powerful result which has been useful for several
applications (see \cite{Bay, football, boasK, Kor, Pit}). This inequality, in
its whole generality, is a probabilistic result but in our case (and
apparently in most of the applications) a weaker version is enough:

\begin{theorem}
[Kahane--Salem--Zygmund inequality]Let $m,n$ be positive integers. Then there
are signs $\varepsilon_{\alpha}=\pm1$ so that the $m$-homogeneous polynomial
\[
P_{m,n}:\ell_{\infty}^{n}\rightarrow\mathbb{C}
\]
given by
\[
P_{m,n}(z)={\textstyle\sum_{\left\vert \alpha\right\vert =m}}\varepsilon_{\alpha}z^{\alpha}
\]
satisfies
\[
\left\Vert P_{m,n}\right\Vert \leq Cn^{\left(  m+1\right)  /2}\sqrt{\log m}
\]
where $C>0$ is an universal constant (it does not depend on $n$ or $m$).
\end{theorem}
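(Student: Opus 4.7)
The plan is to prove the inequality via a standard random signs argument, fleshed out through a subgaussian concentration bound on each point of a suitable finite net and then a Markov/Lipschitz-type estimate to transfer control from the net to the whole polydisc.

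First, I would let $\left(\varepsilon_{\alpha}\right)_{\lvert\alpha\rvert=m}$ be independent Rademacher variables and view $P_{m,n}$ as a random polynomial. For each fixed $z\in\overline{\mathbb{D}}^{n}$, the value $P_{m,n}(z)=\sum_{\lvert\alpha\rvert=m}\varepsilon_{\alpha}z^{\alpha}$ is a Rademacher sum, so Hoeffding's inequality yields the subgaussian tail bound
\[
\mathbb{P}\bigl(\lvert P_{m,n}(z)\rvert>t\bigr)\leq 4\exp\!\left(-\frac{t^{2}}{2V(z)}\right),\qquad V(z):=\sum_{\lvert\alpha\rvert=m}\lvert z^{\alpha}\rvert^{2}.
\]
Since $\lvert z^{\alpha}\rvert\leq 1$ and the number of multi-indices of weight $m$ in $n$ variables is $\binom{n+m-1}{m}\leq n^{m}$ (up to harmless constants for small $n$), we have $V(z)\leq n^{m}$ uniformly.

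Next I would discretize. Take a $\delta$-net $\mathcal{N}\subset\overline{\mathbb{D}}^{n}$ in the sup-norm of cardinality $\lvert\mathcal{N}\rvert\leq(C/\delta)^{2n}$ (viewing $\mathbb{D}^{n}$ as a bounded subset of $\mathbb{R}^{2n}$). A union bound over $\mathcal{N}$ gives
\[
\mathbb{P}\!\left(\max_{z\in\mathcal{N}}\lvert P_{m,n}(z)\rvert>t\right)\leq 4\Bigl(C/\delta\Bigr)^{2n}\exp\!\left(-\frac{t^{2}}{2n^{m}}\right),
\]
which is strictly less than $1$ once $t$ is a sufficiently large multiple of $n^{m/2}\sqrt{n\log(1/\delta)}$. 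Hence, for some realization of the signs, $\max_{\mathcal{N}}\lvert P_{m,n}\rvert$ is bounded by that threshold.

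To pass from the finite net to the whole polydisc I would use a polynomial-type Lipschitz bound: for any $m$-homogeneous polynomial $Q$ on $\overline{\mathbb{D}}^{n}$, each partial derivative satisfies $\lvert\partial Q/\partial z_{i}\rvert\leq m\lVert Q\rVert_{\infty}$ on $\overline{\mathbb{D}}^{n}$ (a direct consequence of Cauchy's estimate applied in one variable at a time), so $\lvert Q(z)-Q(z')\rvert\lesssim m\lVert Q\rVert_{\infty}\lVert z-z'\rVert_{\infty}$. Choosing $\delta$ of order $1/m$, this yields $\lVert P_{m,n}\rVert_{\infty}\leq 2\max_{z\in\mathcal{N}}\lvert P_{m,n}(z)\rvert$; substituting $\log(1/\delta)\sim\log m$ back into the threshold gives $\lVert P_{m,n}\rVert_{\infty}\leq Cn^{(m+1)/2}\sqrt{\log m}$ as claimed.

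The subtle point, and the main obstacle, is applying the Markov-type estimate self-consistently: the Lipschitz bound is stated in terms of $\lVert P_{m,n}\rVert_{\infty}$, which is precisely what we are trying to control. This is standard to circumvent (for instance by a short bootstrap: if $\lVert P_{m,n}\rVert_{\infty}$ is attained at some $z^{*}$ and $z_{0}\in\mathcal{N}$ is within $\delta$ of $z^{*}$, then $\lVert P_{m,n}\rVert_{\infty}\leq\lvert P_{m,n}(z_{0})\rvert+m\delta\lVert P_{m,n}\rVert_{\infty}$, and choosing $m\delta\leq 1/2$ absorbs the second term), but it must be done carefully to keep the $\log m$ factor and not inflate it to $\log n$ or to a power of $n$. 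All of the dimension $n$ has to be packed into the variance bound $V\leq n^{m}$, and only the mesh $\delta\sim 1/m$ may contribute the logarithmic factor; this precise choice of scales is the heart of the argument.
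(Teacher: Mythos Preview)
The paper does not prove this theorem; it is quoted from Kahane's book (\cite[Theorem~4, Chapter~6]{Kah}) as a black-box tool for Section~10. So there is no ``paper's own proof'' to compare against. Your sketch is indeed the standard random-signs/net argument behind the Kahane--Salem--Zygmund inequality, but there is a genuine gap in the Lipschitz step.

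From the coordinatewise Bernstein bound $|\partial_j Q|\le m\|Q\|_\infty$ one does \emph{not} get $|Q(z)-Q(z')|\lesssim m\|Q\|_\infty\,\|z-z'\|_\infty$: moving from $z$ to $z'$ one coordinate at a time accumulates $n$ such increments, so the honest estimate is $|Q(z)-Q(z')|\le nm\,\|Q\|_\infty\,\|z-z'\|_\infty$ (or $m\sqrt{n}\,\|Q\|_\infty\,\|z-z'\|_2$ via the directional Bernstein inequality on the torus, since the frequency vectors satisfy $|\alpha|_2\le|\alpha|_1=m$). The bootstrap then forces $\delta$ of order $1/(nm)$ rather than $1/m$, and the union-bound threshold becomes $n^{(m+1)/2}\sqrt{\log(nm)}$, not $n^{(m+1)/2}\sqrt{\log m}$. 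Your closing paragraph asserts that only the mesh $\delta\sim 1/m$ should contribute the logarithm, but the argument as written does not establish this; the dimension $n$ leaks back in through the Lipschitz constant.

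For every use the paper actually makes of the inequality (fixing $m$ and sending $n\to\infty$ to show optimality of the exponent $\tfrac{2m}{m+1}$), the weaker bound with $\sqrt{\log(nm)}$ is already sufficient, since an extra $\sqrt{\log n}$ is negligible against any positive power of $n$. If you insist on the precise factor $\sqrt{\log m}$, a single-scale net will not do; one needs a finer device (chaining, or Kahane's original multiscale argument). A minor terminological point: the derivative bound on the closed polydisc is Bernstein's inequality for polynomials, not Cauchy's estimate---Cauchy only controls derivatives at interior points.
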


Connections between the Bohnenblust--Hille inequality and the Kahane--Salem--Zygmund inequality are known; our aim is to stress even closer connections that may be useful in future investigations.

In the next subsection we show that the optimality of the exponent $\frac{2m}{m+1}$ is a simple corollary of the Kahane--Salem--Zygmund inequality.

\subsection{A new (and simple) proof that the power $\frac{2m}{m+1}$ is sharp}

As mentioned in the Introduction, there seems to exist no direct proof of the optimality of the exponent $\frac{2m}{m+1}$ in the Bohnenblust--Hille inequalities.

In \cite{Blei} there is an alternative proof for the case of multilinear mappings, but the arguments are also highly nontrivial, involving $p$-Sidon sets and sub-Gaussian systems. Here we shall show that the optimality of the power $\frac{2m}{m+1}$ is a straightforward consequence of the Kahane--Salem--Zygmund inequality (the results are stated for complex scalars but the same argument holds for real scalars, since it is obvious that the Kahane--Salem--Zygmund inequality can be adapted to the case of real scalars). It is worth mentioning that our proof in fact proves more than the statement of the theorem (see Theorem \ref{novo00} below).

\begin{theorem}\label{BMM} 
The power $\frac{2m}{m+1}$ in the Bohnenblust--Hille inequalities is sharp.
\end{theorem}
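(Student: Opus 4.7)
The plan is to exhibit the Kahane--Salem--Zygmund polynomial as a near-extremal object and compare powers of $n$ on the two sides of the would-be inequality. Suppose, toward contradiction, that some exponent $q \in [1, \tfrac{2m}{m+1})$ admits a polynomial Bohnenblust--Hille-type inequality: there is a constant $K = K(m)$, independent of $n$, such that
\[
\left( \sum_{|\alpha|=m} |c_\alpha|^q \right)^{1/q} \leq K \, \|P\|_{\infty}
\]
for every $m$-homogeneous polynomial $P = \sum c_\alpha z^\alpha$ on $\ell_\infty^n$. I would plug in the Kahane--Salem--Zygmund polynomial $P_{m,n}$ from the theorem above, each of whose coefficients has modulus $1$.

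Then the left-hand side equals $\binom{n+m-1}{m}^{1/q}$, which for fixed $m$ is of order $n^{m/q}$, while the right-hand side is bounded above by $K C \sqrt{\log m}\, n^{(m+1)/2}$. Letting $n \to \infty$ and comparing powers of $n$ forces
\[
\frac{m}{q} \leq \frac{m+1}{2},
\]
that is, $q \geq \tfrac{2m}{m+1}$, which is the desired contradiction. For the multilinear Bohnenblust--Hille statement the argument is identical once one records the analogous Kahane--Salem--Zygmund bound for the $m$-linear form
\[
U(z^{(1)}, \ldots, z^{(m)}) = \sum_{i_1, \ldots, i_m = 1}^{n} \varepsilon_{i_1 \cdots i_m} \, z^{(1)}_{i_1} \cdots z^{(m)}_{i_m};
\]
since $|U(e_{i_1}, \ldots, e_{i_m})| = 1$ for every index tuple, the left-hand side of the multilinear inequality applied to $U$ equals $n^{m/q}$ exactly, and the same comparison yields $q \geq \tfrac{2m}{m+1}$.

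There is no real obstacle here: the whole argument reduces to a one-line comparison of exponents of $n$. The only bookkeeping to watch is that the factors depending solely on $m$ (namely the $\sqrt{\log m}$ term from the Kahane--Salem--Zygmund bound and the constant implicit in $\binom{n+m-1}{m} \asymp n^m/m!$) are harmless in the $n \to \infty$ limit since they do not involve $n$. It is worth noting that the same calculation yields a formally stronger conclusion: any candidate constant $K = K(m,n)$ that allows the inequality to hold for some $q < \tfrac{2m}{m+1}$ must in fact satisfy $K(m,n) \gtrsim n^{\,m/q - (m+1)/2}$, so the obstruction is not merely the failure of a uniform constant but a polynomial lower bound on the growth in $n$, which is presumably the content of the \emph{formally stronger} result alluded to in the introduction.
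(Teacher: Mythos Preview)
Your proof of the polynomial case is correct and is essentially the paper's own argument: plug the Kahane--Salem--Zygmund polynomial into a hypothetical inequality with exponent $q<\tfrac{2m}{m+1}$, count that the left side grows like $n^{m/q}$ while the right side grows like $n^{(m+1)/2}$, and let $n\to\infty$.

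Two small points where you diverge from the paper. First, for the multilinear case you invoke a multilinear Kahane--Salem--Zygmund form directly; such a result is true, but the paper only states the polynomial version and instead disposes of the multilinear case in one line by observing that a multilinear Bohnenblust--Hille inequality with exponent $q$ implies the polynomial one with the same exponent (restrict to the diagonal), so optimality transfers automatically. Your route works but imports an extra ingredient; the paper's route is more self-contained. Second, your guess about the ``formally stronger'' result is not what the paper has in mind: the paper's Theorem~\ref{novo00} says the exponent $\tfrac{2m}{m+1}$ is already forced when the inequality is only required to hold for \emph{Bernoulli} polynomials (all coefficients $\pm 1$), not a statement about growth of constants in $n$. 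Since the KSZ polynomial is itself Bernoulli, this falls out of the same computation with no extra work.
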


\begin{proof}
Let $m\geq2$ be a fixed positive integer. For each $n$, let $P_{m,n}:\ell_{\infty}^{n}\rightarrow\mathbb{C}$ be the $m$-homogeneous polynomial satisfying the Kahane--Salem--Zygmund inequality. For our goals it suffices to deal with the case $n>m.$

Let $q<\frac{2m}{m+1}$. Then a simple combinatorial calculation shows that
\[
{\textstyle\sum_{\left\vert \alpha\right\vert =m}}\left\vert \varepsilon
_{\alpha}\right\vert ^{q}=p(n)+\frac{1}{m!}{\textstyle\prod\limits_{k=0}%
^{m-1}}(n-k),
\]
where $p\left(  n\right)  >0$ is a polynomial of degree $m-1.$ If the
polynomial Bohnenblust--Hille inequality was true with the power $q$, then
there would exist a constant $C_{m,q}>0$ so that
\[
C_{m,q}C\geq\frac{1}{n^{\left(  m+1\right)  /2}\sqrt{\log m}}\left(
p(n)+\frac{1}{m!}{\textstyle\prod\limits_{k=0}^{m-1}}(n-k)\right)  ^{1/q}%
\]
for all $n$. If we raise both sides to the power of $q$ and let $n\rightarrow
\infty$ we obtain
\[
\left(  C_{m,q}C\right)  ^{q}\geq\lim_{n\rightarrow\infty}\left(  \frac
{r(n)}{m!n^{q\left(  m+1\right)  /2}\left(  \sqrt{\log m}\right)  ^{q}}%
+\frac{p(n)}{n^{q\left(  m+1\right)  /2}\left(  \sqrt{\log m}\right)  ^{q}%
}\right)  ,
\]
with
\[
r(n)={\textstyle\prod\limits_{k=0}^{m-1}}(n-k).
\]
Since
\[
\deg r=m>\frac{q(m+1)}{2}%
\]
we have
\[
\lim_{n\rightarrow\infty}\left(  \frac{r(n)}{m!n^{q\left(  m+1\right)
/2}\left(  \sqrt{\log m}\right)  ^{q}}+\frac{p(n)}{n^{q\left(  m+1\right)
/2}\left(  \sqrt{\log m}\right)  ^{q}}\right)  =\infty,
\]
a contradiction. Since the multilinear Bohnenblust--Hille inequality (with a
power $q$) implies the polynomial Bohnenblust--Hille inequality with the same
power, we conclude that $\frac{2m}{m+1}$ is also sharp in the multilinear case.
\end{proof}

From now on a Bernoulli polynomial is a polynomial whose coefficients are $1$
or $-1.$ Note that our proof, albeit elementary, it proves in fact a stronger
(although probably known) result:

\begin{theorem}
\label{novo00} Let $q\geq1$ be so that there is a constant $C_{q,m}\geq1$ such
that
\[
\left(  {\textstyle\sum\limits_{\left\vert \alpha\right\vert =m}}\left\vert
\varepsilon_{\alpha}\right\vert ^{q}\right)  ^{\frac{1}{q}}\leq C_{q,m}%
\left\Vert P_{m,n}\right\Vert
\]
for every $m$-homogeneous Bernoulli polynomial $P_{m,n}:\ell_{\infty}%
^{n}\rightarrow\mathbb{C}$,%
\[
P_{m,n}(z)={\sum_{\left\vert \alpha\right\vert =m}}\varepsilon_{\alpha
}z^{\alpha}.
\]
Then%
\[
q\geq\frac{2m}{m+1}.
\]

\end{theorem}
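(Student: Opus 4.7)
The proof plan is to adapt the argument of Theorem~\ref{BMM} essentially verbatim, exploiting the crucial observation that the polynomial produced by the Kahane--Salem--Zygmund inequality has coefficients $\pm 1$, i.e. is itself a Bernoulli polynomial in the sense just defined. Thus the hypothesis of Theorem~\ref{novo00}, although formally weaker than the full polynomial Bohnenblust--Hille inequality used in Theorem~\ref{BMM}, applies directly to the Kahane--Salem--Zygmund extremal polynomials, and the same contradiction goes through with no modification.

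Concretely, I would fix $m\geq 2$, suppose for contradiction that $q<\frac{2m}{m+1}$, and for each $n>m$ invoke the Kahane--Salem--Zygmund inequality to produce signs $\varepsilon_\alpha\in\{\pm 1\}$ such that the Bernoulli polynomial
\[
P_{m,n}(z)=\sum_{|\alpha|=m}\varepsilon_\alpha z^\alpha
\]
satisfies $\|P_{m,n}\|\leq C\, n^{(m+1)/2}\sqrt{\log m}$. Since $|\varepsilon_\alpha|=1$ for every multi-index with $|\alpha|=m$, the hypothesized inequality applied to this specific polynomial reads
\[
\left(\binom{n+m-1}{m}\right)^{1/q}\leq C_{q,m}\, C\, n^{(m+1)/2}\sqrt{\log m}.
\]

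Raising both sides to the $q$-th power, the left-hand side becomes $\binom{n+m-1}{m}$, a polynomial in $n$ of degree $m$ with positive leading coefficient $1/m!$ (equivalently, it equals $\frac{1}{m!}\prod_{k=0}^{m-1}(n-k)$ plus a lower-order polynomial of degree $m-1$, exactly as in the proof of Theorem~\ref{BMM}). The right-hand side grows like a constant multiple of $n^{q(m+1)/2}(\log m)^{q/2}$. Since $q<\frac{2m}{m+1}$ forces $q(m+1)/2<m$, dividing through by $n^m$ and letting $n\to\infty$ sends the right-hand side to $0$ while the left-hand side tends to $1/m!>0$, which is the desired contradiction.

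There is really no genuine obstacle here beyond an elementary asymptotic comparison; the whole point of Theorem~\ref{novo00} is to make explicit that the proof of Theorem~\ref{BMM} never needed the polynomial Bohnenblust--Hille inequality for arbitrary coefficients, only for Bernoulli coefficients, and this suffices to force the critical exponent $\frac{2m}{m+1}$. Exactly the same argument (replacing $\mathbb{C}$ by $\mathbb{R}$ and invoking the real-scalar version of the Kahane--Salem--Zygmund inequality) yields the analogous statement in the real case.
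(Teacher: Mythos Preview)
Your proposal is correct and is precisely the paper's own argument: the paper does not give a separate proof of Theorem~\ref{novo00} but simply observes (in the line preceding its statement) that the proof of Theorem~\ref{BMM} already establishes it, since the Kahane--Salem--Zygmund polynomial is itself a Bernoulli polynomial. Your write-up makes this explicit and carries out the same asymptotic comparison.
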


\subsection{The Kahane--Salem--Zygmund constant \& the op\-ti\-mal (complex)
polynomial Bo\-hnen\-blust--Hille cons\-tants}

From now on $K_{m}^{pol}$ denotes the optimal constant satisfying the
($m$-homogeneous) polynomial Bohnenblust--Hille inequality (complex case) and
$C$ denotes the universal constant from the Kahane--Salem--Zygmund inequality.

In this subsection we sketch some connections between the universal constant
from the Kahane--Salem--Zygmund inequality and the optimal constants from the
(complex) polynomial Bohnenblust--Hille inequality; this approach may be
useful to build strategies (or at least to show that some strategies are not
adequate) for the investigation of lower bounds for the complex polynomial
Bohnenblust--Hille constants.

The search of the optimal constants of any nature is naturally divided in two
different approaches: the search of upper estimates and lower estimates. For
the polynomial Bohnenblust--Hille inequalities the situation is not different.

The best result on upper bounds for the (complex) polynomial
Bohnenblust--Hille constants is due to Defant \textit{et al.}, published in
2011 in \cite{annals} (see \eqref{defant55}). On the other hand, the search
for lower bounds presents very few advances in the complex case. Up to now the
unique nontrivial result (for complex scalars) in this direction states that
\[
K_{2}^{pol}\geq1.1066\text{ (\cite{munn}).}%
\]
Let us begin with a simple remark: the optimal constants satisfying the
($m$-homogeneous) polynomial Bohnenblust--Hille inequality can be used to
estimate the universal constant $C$ from the Kahane--Salem--Zygmund inequality.

In fact, using the same procedure of the previous subsection (choosing $m=n$)
we conclude that
\begin{equation}
K_{m}^{pol}C\geq\frac{1}{m^{\frac{m+1}{2}}\sqrt{\log m}}\left(  \frac{\left(
2m-1\right)  !}{m!\left(  m-1\right)  !}\right)  ^{\frac{m+1}{2m}}
\label{der3}%
\end{equation}
for all $m\geq2.$ A rapid calculation gives us a lower bound for the optimal value of $C.$ In fact, for $m=2$ in \eqref{der3}, we have
\begin{equation}
K_{2}^{pol}C>0.9680. \label{der2}%
\end{equation}

But, from \cite[Th III.1]{Q} (in this case the estimate from \cite[Th
III.1]{Q} is better than \eqref{defant55}) we know that%
\[
K_{2}^{pol}\leq1.7432
\]
and thus we conclude that%
\begin{equation}
C>\frac{0.9680}{1.7432}>0.5553. \label{der}%
\end{equation}
However, using a different technique (in fact, using exhaustion for $m=n=2$)
we can obtain a quite better estimate for $C$. From \cite[eq 3.1]{aron}, and
the Maximum Modulus Principle (as used in \cite{munn}) we can show that if
$P_{2}:\ell_{\infty}^{2}\rightarrow\mathbb{C}$ is defined by
\begin{equation}
P_{2}(z_{1},z_{2})=az_{1}^{2}+bz_{2}^{2}+cz_{1}z_{2} \label{sqw}%
\end{equation}
with $a,b,c\in\mathbb{R}$, then%
\[
\Vert P_{2}\Vert=%
\begin{cases}
|a+b|+|c| & \text{if $ab\geq0$ or $|c(a+b)|>4|ab|$,}\\
\left(  |a|+|b|\right)  \sqrt{1+\frac{c^{2}}{4|ab|}} & \text{otherwise.}%
\end{cases}
\]
So if $a,b,c\in\left\{  -1,1\right\}  $ the possible norms of $P_{2}$ are $3$
and $\sqrt{5}.$ So, it is immediate that%
\begin{equation}
C\geq\frac{\sqrt{5}}{2^{3/2}\sqrt{\log2}}>0.9495. \label{der33}%
\end{equation}

We have not found lower estimates for $C$ in the literature; although probably \eqref{der33} could be of interest. The gap between the estimates \eqref{der} and \eqref{der33} is probably due the fact that Bernoulli polynomials seem to
be not good candidates for furnishing lower bounds for $K_{m}^{pol}$. In fact,
in \cite{munn} the best choice (for obtaining lower bounds for the polynomial
Bohnenblust--Hille constant for $2$-homogeneous polynomials) over all
polynomials of the form \eqref{sqw} was
\[
P_{2}(z_{1},z_{2})=z_{1}^{2}-z_{2}^{2}+\frac{352\,203}{125\,000}z_{1}z_{2}.
\]
Since $C$ is an universal constant, it is presumable that \eqref{der3} may be
not useful for estimating the Bohnenblust--Hille constants. For a stronger
version of \eqref{der3} it seems that we should avoid the use of the universal
constant $C$ and use particular values of $C$ for specific values of $m,n.$
More precisely, if $n\geq m\geq2$ are fixed, the Kahane--Salem--Zygmund
inequality tells us that there is a constant $C_{m,n}$ with $0<C_{m,n}\leq C$
so that that there are signs $\varepsilon_{\alpha}=\pm1$ and an $m$%
-homogeneous polynomial
\begin{align*}
P_{m,n}  &  :\ell_{\infty}^{n}\rightarrow\mathbb{C}\\
P_{m,n}(z)  &  ={\sum_{\left\vert \alpha\right\vert =m}}\varepsilon_{\alpha
}z^{\alpha},
\end{align*}
with
\[
\left\Vert P_{m,n}\right\Vert \leq C_{m,n}n^{\left(  m+1\right)  /2}\sqrt{\log
m}.
\]
Keeping this notation we have that
\[
K_{m}^{pol}C_{m,n}\geq\frac{1}{n^{\frac{m+1}{2}}\sqrt{\log m}}\left(
\frac{\left(  n+m-1\right)  !}{m!\left(  n-1\right)  !}\right)  ^{\frac
{m+1}{2m}}%
\]
whenever $m,n$ are positive integers with $n\geq m\geq2$. So, the search of
the optimal values of $C_{m,n},$ besides its intrinsic interest, may help in
the incipient investigation of lower bounds for the optimal Bohnenblust--Hille
constants $K_{m}^{pol}$. However, our suspicion is that Bernoulli polynomials
(and thus the Kahane-Salem--Zygmund inequality) are effective exclusively for
the proof of the optimality of the exponent $\frac{2m}{m+1}$ (Theorem
\ref{BMM}) and, as it happened in the case $m=n=2$, they seem not efficient
for the estimation of the constants $K_{m}^{pol}$.

\subsection{The Kahane--Salem--Zygmund inequality: is the power $\frac{m+1}%
{2}$ optimal even for real scalars?}

It is obvious that the norm of a Bernoulli polynomial over the complex scalar
field is never smaller than its norm over the real scalar field. More
precisely, if $\mathbb{K}=\mathbb{R}$ or $\mathbb{C}$, $\varepsilon_{\alpha}
\in\left\{  -1,1\right\}  $ and
\begin{align*}
P_{\mathbb{K}}  &  :\ell_{\infty}^{n}(\mathbb{K})\rightarrow\mathbb{K}\\
P_{\mathbb{K}}(z)  &  ={\sum_{\left\vert \alpha\right\vert =m}}\varepsilon
_{\alpha}z^{\alpha},
\end{align*}
then
\[
\left\Vert P_{\mathbb{R}}\right\Vert \leq\left\Vert P_{\mathbb{C}}\right\Vert
.
\]
A concrete example: if $P_{\mathbb{K}}:\ell_{\infty}^{2}(\mathbb{K}
)\rightarrow\mathbb{K}$ is given by
\[
P_{\mathbb{K}}(z)=z_{1}^{2}-z_{2}^{2}+z_{1}z_{2},
\]
then
\[
\left\Vert P_{\mathbb{R}}\right\Vert =\frac{5}{4}<\sqrt{5}=\left\Vert
P_{\mathbb{C}}\right\Vert .
\]
So, as mentioned in Subsection 10.1, it is obvious that the
Kahane--Salem--Zygmund inequality holds for real scalars. It seems to be
well-known that the power $\frac{m+1}{2}$ in the Kahane--Salem--Zygmund
inequality is optimal (for complex scalars) but for real scalars the result
seems to be not clear. In any case, the following straightforward proof (via
Bohnenblust--Hille inequality) that the exponent $\frac{m+1}{2}$ is optimal
for both real or complex scalars seems to be of independent interest.

\begin{theorem}
The power $\frac{m+1}{2}$ in the Kahane--Salem--Zygmund inequality is optimal
for both real and complex scalars.
\end{theorem}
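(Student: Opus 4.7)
The plan is to run the argument of Theorem \ref{novo00} in reverse: the polynomial Bohnenblust--Hille inequality gives a lower bound on the sup-norm of any Bernoulli polynomial in terms of the counting of its monomials, and this lower bound forces the exponent in any Kahane--Salem--Zygmund-type estimate to be at least $\frac{m+1}{2}$. Crucially, the polynomial Bohnenblust--Hille inequality is valid over both $\mathbb{R}$ and $\mathbb{C}$ with the same exponent $\frac{2m}{m+1}$, so the argument handles the two scalar fields uniformly.

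More concretely, I would argue by contradiction. Fix $m\geq 2$ and suppose there exist $s<\frac{m+1}{2}$ and a constant $D_m>0$ such that for every positive integer $n$ one can find signs $\varepsilon_\alpha=\pm1$ with the Bernoulli polynomial $P_{m,n}(z)=\sum_{|\alpha|=m}\varepsilon_\alpha z^\alpha$ on $\ell_\infty^n(\mathbb{K})$ satisfying $\|P_{m,n}\|\leq D_m\, n^{s}\sqrt{\log m}$. Applying the ($\mathbb{K}$-scalar) polynomial Bohnenblust--Hille inequality to $P_{m,n}$, with optimal constant $K_m^{pol}(\mathbb{K})$, one gets
\[
\Bigl(\tbinom{n+m-1}{m}\Bigr)^{\!\frac{m+1}{2m}}=\Bigl(\sum_{|\alpha|=m}|\varepsilon_\alpha|^{\frac{2m}{m+1}}\Bigr)^{\!\frac{m+1}{2m}}\leq K_m^{pol}(\mathbb{K})\,\|P_{m,n}\|\leq K_m^{pol}(\mathbb{K})\,D_m\, n^{s}\sqrt{\log m}.
\]

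The next step is the asymptotic comparison as $n\to\infty$ with $m$ fixed. Since $\binom{n+m-1}{m}=\frac{1}{m!}\prod_{k=0}^{m-1}(n+k)$ is a polynomial in $n$ of degree exactly $m$, the left-hand side grows like a constant times $n^{m\cdot\frac{m+1}{2m}}=n^{\frac{m+1}{2}}$. Dividing by $n^{s}$ and letting $n\to\infty$ then drives the left-hand side to infinity while the right-hand side stays bounded, the desired contradiction. This shows optimality of $\frac{m+1}{2}$ for both scalar fields.

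The only genuine subtlety, and the place where one must be a bit careful, is making sure the argument really does not silently use anything specific to $\mathbb{C}$: the polynomial Bohnenblust--Hille inequality and its exponent $\frac{2m}{m+1}$ are valid over $\mathbb{R}$ as well (with, in general, a different and possibly worse constant), and the coefficient sum $\sum|\varepsilon_\alpha|^{2m/(m+1)}$ is literally the number of monomials, so both sides of the displayed inequality have a meaning that is indifferent to the scalar field. Therefore the same proof works verbatim in the real case, and no separate argument is needed.
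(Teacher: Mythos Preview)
Your proof is correct and follows essentially the same approach as the paper's: assume a KSZ-type bound with exponent $s<\frac{m+1}{2}$, apply the polynomial Bohnenblust--Hille inequality (valid over both $\mathbb{R}$ and $\mathbb{C}$) to the Bernoulli polynomial, count monomials to get growth of order $n^{(m+1)/2}$ on the left, and let $n\to\infty$ for a contradiction. Your use of $\binom{n+m-1}{m}$ directly is slightly cleaner than the paper's decomposition into $\frac{1}{m!}\prod_{k=0}^{m-1}(n-k)$ plus a lower-order term $p(n)$, but the substance is identical.
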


\begin{proof}
The argument is similar to the proof of the optimality of Theorem \ref{BMM}.
Let $m\geq2$ be a fixed positive integer, $n\geq m$ and $\mathbb{K}%
=\mathbb{R}$ or $\mathbb{C}$. Let us suppose that the Kahane--Salem--Zygmund
inequality is valid for an exponent $q<\frac{m+1}{2}.$ For each $n$ and $m$
let $P_{m,n}:\ell_{\infty}^{n}(\mathbb{K})\rightarrow\mathbb{K}$ be the
$m$-homogeneous polynomial satisfying the Kahane--Salem--Zygmund inequality
with this exponent $q$. As in the proof of Theorem \ref{BMM}, we have
\[
{\textstyle\sum_{\left\vert \alpha\right\vert =m}}\left\vert \varepsilon
_{\alpha}\right\vert ^{\frac{2m}{m+1}}=p(n)+\frac{1}{m!}{\textstyle\prod
\limits_{k=0}^{m-1}}(n-k),
\]
where $p\left(  n\right)  >0$ is a polynomial of degree $m-1$; and there would
exist a constant $C_{(q)}>0$ so that
\[
K_{m}^{pol}C_{(q)}\geq\frac{1}{n^{q}\sqrt{\log m}}\left(  p(n)+\frac{1}%
{m!}{\textstyle\prod\limits_{k=0}^{m-1}}(n-k)\right)  ^{\left(  m+1\right)
/2m}%
\]
for all $n.$ Hence
\[
\left(  K_{m}^{pol}C_{(q)}\right)  ^{\frac{2m}{m+1}}\geq\lim_{n\rightarrow
\infty}\left(  \frac{r(n)}{m!n^{\frac{2mq}{m+1}}\left(  \sqrt{\log m}\right)
^{\frac{2m}{m+1}}}+\frac{p(n)}{n^{\frac{2mq}{m+1}}\left(  \sqrt{\log
m}\right)  ^{\frac{2m}{m+1}}}\right)  .
\]
with $r$ as in the proof of Theorem \ref{BMM}. Since
\[
\deg r=m>\frac{2mq}{m+1}%
\]
we obtain a contradiction.
\end{proof}

\section{Is there a strong multilinear Bohnenblust--Hille inequality?}

Of course, there are still a lot of open questions related to the growth of the optimal constants satisfying the multilinear (and polynomial) Bohnenblust--Hille inequalities to be solved. For example, it is {\em not} clear that the optimal constants $\left(  K_{n}\right)  _{n=1}^{\infty}$ satisfying the multilinear Bohnenblust--Hille inequality grow to infinity. It seems that the original estimates induce us to think that in fact $K_{n}\rightarrow\infty,$ but it purports to exist no other evidence for this.

Although there still remains in a veil of mystery, combining all the information obtained thus far we believe that the possibility of boundedness of the constants of the multilinear Bohnenblust--Hille inequality should be seriously considered. We prefer not to conjecture that it is true, but instead we pose it as an open problem:

\begin{problem}
Is there an universal constant $K_{\mathbb{K}}$ so that
\[
\left(  \sum\limits_{i_{1},\ldots,i_{m}=1}^{N}\left\vert U(e_{i_{^{1}}},\ldots,e_{i_{m}})\right\vert ^{\frac{2m}{m+1}}\right)  ^{\frac{m+1}{2m}}\leq K_{\mathbb{K}}\sup_{z_{1},\ldots,z_{m}\in\mathbb{D}^{N}}\left\vert U(z_{1},\ldots,z_{m})\right\vert
\]
for every positive integer $m\geq1,$ all $m$-linear forms $U:\mathbb{K}^{N}\times\cdots\times\mathbb{K}^{N}\rightarrow\mathbb{K}$ and every positive integer $N$?
\end{problem}

\begin{conjecture}
If the answer to the previous problem is positive, we conjecture that $K_{\mathbb{R}}=2$ and $K_{\mathbb{C}}\leq 2.$
\end{conjecture}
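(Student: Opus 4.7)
The statement is explicitly posed as a conjecture contingent on the open Problem 11.1, so no proof is presently known and my plan is necessarily a search strategy rather than a proof sketch. For the upper bound $K_{\mathbb{K}}\leq 2$ the first step is to promote the recursive estimates for $C_n$ (and $\widetilde{C}_n$) from a purely \emph{multiplicative} to an \emph{affine} recursion. The current contraction $K_{2n}\leq D\cdot K_n$ with $D=e^{1-\gamma/2}/\sqrt{2}\approx 1.44$ forces slow growth away from the base cases $K_1=1$, $K_2=\sqrt{2}$, but is not sharp; what would suffice is a bound of shape
\[
K_{2n}\leq \alpha\,K_n+(1-\alpha)\cdot 2,\qquad \alpha\in(0,1),
\]
whose unique fixed point is $2$ and therefore gives boundedness by $2$ automatically once any initial segment verifies $K_n\leq 2$. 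The analytic input would be a refinement of the Khintchine-type step in \cite{jmaa}: replace the scalar $A_p^{-m/2}$ multiplying $K_n$ by an affine combination of $A_p^{-m/2}$ and $1$, exploiting that the extremizer for Littlewood's $4/3$-theorem is essentially two-dimensional and therefore wastes most of the mixing room available in $\mathbb{D}^N$. A parallel route would be a direct interpolation between the trivial case $m=1$ and Littlewood's case $m=2$ that does not iterate, perhaps through an interpolation scale between $L_\infty(\mathbb{D}^N)$ and a tailored Hardy-type space encoding full $m$-linear symmetry.

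For the matching lower bound $K_{\mathbb{R}}\geq 2$ I would look beyond the Bernoulli class used in Section 10, which Subsection 10.2 already identifies as inadequate. Tensor powers of the Littlewood extremizer yield Bohnenblust--Hille ratios of order $\sqrt{m}$, violating the conjecture for large $m$; the fact that this construction is \emph{too strong} is precisely what leaves room for a uniform bound by $2$, because genuine multilinear interaction is strictly weaker than tensorial interaction. I would therefore compute the ratio exactly for symmetrized variants of the canonical form $U_m(x_1,\dots,x_m)=\sum x_1(i_1)\cdots x_m(i_m)$ supported on combinatorial designs (Steiner systems, projective planes, Hadamard matrices) for small $m\in\{3,4,5,6\}$, and either certify $2$ as the correct supremum or produce a counterexample that simultaneously answers Problem 11.1 negatively.

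The main obstacle, and the reason this is stated as a conjecture rather than a theorem, is that every known derivation of the multilinear Bohnenblust--Hille inequality loses a multiplicative factor in passing from $m$ to $m+1$, so a uniform bound by $2$ demands a fundamentally new inequality that is simultaneously sharp at every arity. The Fundamental Lemma of the present paper shows that the difference $K_{n+1}-K_n$ must vanish along a subsequence, which is consistent with the conjecture but offers no mechanism for forcing the partial sums to remain bounded. On the lower-bound side the obstacle is symmetric: no single finite multilinear form is known that produces a ratio close to $2$, so the conjectured equality $K_{\mathbb{R}}=2$ would have to be attained only asymptotically by a sequence of extremizers whose structure eludes current construction techniques and would likely require input from the Aaronson--Ambainis circle of ideas mentioned in item (iv) of Section 2.
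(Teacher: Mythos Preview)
You correctly recognize that the statement is a conjecture, not a theorem, and that the paper offers no proof. The surrounding text in Section~11 provides only \emph{motivation}: the lower bound $K_m\geq 2^{1-1/m}$ from \cite{diniz2} is sharp for $m=1,2$, the same inductive mechanism generates the bound at every higher $m$, and the recent downward pressure on upper estimates makes it plausible that $2^{1-1/m}$ is actually optimal for all $m$, whence $K_{\mathbb{R}}=\sup_m K_m=2$; the inequality $K_{\mathbb{C}}\leq K_{\mathbb{R}}$ is then invoked as folklore. Your proposal is a research programme rather than a proof, so at the level of argument there is nothing in the paper to compare against.

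One concrete assertion in your programme is incorrect and undermines the paragraph built on it. Tensor powers of the Littlewood extremizer $U(x,y)=x_1y_1+x_1y_2+x_2y_1-x_2y_2$ do \emph{not} yield Bohnenblust--Hille ratios of order $\sqrt{m}$. The $2k$-linear form $U^{\otimes k}$ on $(\ell_\infty^2)^{2k}$ has $\|U^{\otimes k}\|=2^k$ and $4^k$ unimodular coefficients, so its $\ell_{4k/(2k+1)}$-norm is $(4^k)^{(2k+1)/(4k)}=2^{(2k+1)/2}$ and the ratio is exactly $\sqrt{2}$ for every $k$. Thus tensor powers neither ``violate the conjecture'' nor create the tension you describe; the construction in \cite{diniz2} that actually drives the lower bound $2^{1-1/m}$ is a different inductive doubling, and it already approaches $2$ from below, in full agreement with the conjecture. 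Your affine-recursion idea for the upper bound is a reasonable heuristic target, but note that it would have to reproduce the exact value $K_2=\sqrt{2}$ at the base of the induction while forcing the fixed point $2$, which places a rigid constraint on the parameter $\alpha$ that the Khintchine step in \cite{jmaa} gives no obvious way to meet.
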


We justify our conjecture that $K_{\mathbb{R}}=2$ motivated by the lower bounds obtained in \cite{diniz2} for the constants of the multilinear Bohnenblust--Hille inequality (real case),

\begin{equation}
K_{m}\geq2^{1-\frac{1}{m}}. \label{lbb}%
\end{equation}
We stress that the case $m=2$ in \eqref{lbb} is sharp, i.e., $\sqrt{2}$ is the optimal constant for the $2$-linear Bohnenblust--Hille inequality (real case). As a matter of fact, if we consider $m=1$, then the formula \eqref{lbb} also provides a sharp value. So, since in each level $m$, the lower estimate for $K_{m}$ is obtained by the same induction argument (for details, see \cite{diniz2}) and since the cases $m=1,2$ provide sharp constants, we believe that it is not impossible that the formula \eqref{lbb} gives the exact
constants for the Bohnenblust--Hille constants. We reinforce our belief by observing the several recent works showing that the growth of the constants in the Bohnenblust--Hille inequality is it in fact quite slower than the original estimates had predicted.

It seems to be {\em folklore} (although not formally proved) that the constants for the case of real scalars are bigger than the constants for the complex case. For example, for $m=2$ one has $K_{2}=\sqrt{2}$ in the real case and $K_{2}\leq\frac{2}{\sqrt{\pi}}<\sqrt{2}$ in the complex case. Besides, the growth of the constants in the complex case seems to be slower than the growth
in the real case (see \cite{ap, ap2}). So, if our conjecture is correct, it seems natural to think that $K_{\mathbb{C}}\leq K_{\mathbb{R}}$.

It is our belief that the possibility of a ``\emph{strong Bohnenblust--Hille inequality}'' only applies to \emph{multilinear mappings} since, in the case of polynomials, it is essentially shown in \cite{munn} that (at least for real scalars) the optimal constants are not bounded.

\section{Appendix}

Very recently, explicit applications on Quantum Information Theory (more precisely quantum XOR games) of the low growth of the multilinear Bohnenblust--Hille constants (case of real scalars) were provided by A. Montanaro (see \cite{monta}). In view of this new panorama we think that it is worth mentioning that the techniques used in the present paper can be adapted to a wide range of parameters. More precisely, using our techniques we can estimate the constants satisfying Bohnenblust--Hille type inequalities when $\frac{2n}{n+1}$ is replaced by any $q\in\left[  \frac{2n}{n+1},\infty\right)$. Since, for $q>2$, the constants are equal to $1$, the  nontrivial cases take place for $q\in\left[  \frac{2n}{n+1},2\right)$ .

The case of Littlewood's $\frac{4}{3}$ inequality was recently explored in \cite{ap2} and the estimates of $L_{\mathbb{K},r}$ satisfying
\begin{equation}
\left(  \sum\limits_{i,j=1}^{N}\left\vert U(e_{i},e_{j})\right\vert^{r}\right)^{\frac{1}{r}}\leq L_{\mathbb{K},r}\left\Vert U\right\Vert
\label{p9a}
\end{equation}
were obtained. More precisely, it was shown that, for all $r\in\left[\frac{4}{3},t_{0}\right]  ,$ with $t_{0}\approx 1.92068$,
\[
L_{\mathbb{R},r}=\left\{
\begin{array}[c]{c}
2^{\frac{2-r}{r}}\text{ for all }r\in\left[  \frac{4}{3},t_{0}\right]  \\
1\text{ for all }r\geq2,
\end{array}
\right.
\]
and, for $t_{0}<r<2,$ we have
\[
2^{\frac{2-r}{r}}\leq L_{\mathbb{R},r}\leq\frac{1}{\sqrt{2}}\left(\frac{\sqrt{\pi}}{\Gamma\left(  \frac{4+r}{2\left(  4-r\right)  }\right)
}\right)^{\left(  4-r\right)/2r}.
\]
For each $t\in\lbrack1,2)$, let
\[
E_{t,n}=\frac{2nt}{\left(n-1\right) t+2}
\]
for all $n\in%
\mathbb{N}
.$ Note that
\[
E_{1,n}=\left(  \frac{2n}{n+1}\right)  _{n\in\mathbb{N}}
\]
is the \textquotedblleft Bohnenblust--Hille sequences of
exponents\textquotedblright. Note also that for each $t\in(1,2),$ we have
\[
\frac{2nt}{\left(  n-1\right)  t+2}>\left(  \frac{2n}{n+1}\right)  .
\]

Thus, there exist a $C_{n,t}\geq1$ so that
\begin{equation}
\left(  \sum_{i_{1},\ldots, i_{n}}^{N}\left\vert U\left(  e_{i_{1}},\ldots, e_{i_{n}}\right)  \right\vert ^{\frac{2nt}{\left(  n-1\right)  t+2}}\right)
^{\frac{\left(  n-1\right)  t+2}{2nt}}\leq C_{n,t}^{\mathbb{K}}\left\Vert
U\right\Vert ,\label{desia}
\end{equation}
for all $n$-linear forms $U:{\ell_{\infty}^{N}\times \cdots \times \ell_{\infty}^{N}}\rightarrow\mathbb{K},$ and positive integer $N$, with $\mathbb{K=\mathbb{R}}$ or $\mathbb{C}$.

\bigskip

\noindent In what follows we shall show the \textit{continuum version} of the results of the present paper.

\subsection{Estimates in the case $\mathbb{R}$.}
The following result can be proved following the lines of \cite{jmaa}

\begin{theorem}
\label{REALa}If $t\in\lbrack1,2),$ then
\begin{equation}
C_{n,t}^{\mathbb{R}}=\left\{
\begin{array}[c]{ll}
1 & \text{ if } n=1,\\
\left(  A_{\frac{2nt}{\left(  n-2\right)  t+4}}^{-\frac{n}{2}}C_{\frac{n}{2},t}^{\mathbb{R}}\right)  & \text{ if } n \text{ is even, and}\\
\left(  A_{\frac{2\left(  n-1\right)  t}{\left(  n-3\right)  t+4}}^{-\left(
n+1\right)  /2}C_{\frac{n-1}{2},t}^{\mathbb{R}}\right)  ^{\frac{n-1}{2n}}\left(  A_{\frac{2\left(  n+1\right)  t}{\left(
n-1\right)  t+4}}^{-\left(  n-1\right)  /2}C_{\frac{n+1}{2},t}^{\mathbb{R}}\right)  ^{\frac{n+1}{2n}} & \text{ if } n \text{ is odd.}
\end{array}
\right.\label{origa}
\end{equation}
\end{theorem}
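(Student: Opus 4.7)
The plan is to prove Theorem \ref{REALa} by induction on $n$, mirroring the derivation of the recursive formula \eqref{orig} in \cite{jmaa} but carried out for the shifted exponent $q_n := \frac{2nt}{(n-1)t+2}$ in place of $\frac{2n}{n+1}$. The base case $n=1$ is immediate: since $q_1 = t \geq 1$, the $\ell^t$-norm is dominated by the $\ell^1$-norm, and $\|U\| = \sum_i |U(e_i)|$ for a linear form on $\ell_\infty^N$, so $C_{1,t}^{\mathbb{R}}=1$ works.

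The crucial arithmetic observation that drives the recursion is the identity $q_{n/2} = \tfrac{2nt}{(n-2)t+4}$ for even $n$, and analogously $q_{(n-1)/2} = \tfrac{2(n-1)t}{(n-3)t+4}$ and $q_{(n+1)/2} = \tfrac{2(n+1)t}{(n-1)t+4}$ for odd $n$. These are precisely the subscripts appearing in the Khinchine constants $A_\cdot$ in \eqref{origa}, which specialize to $\tfrac{2m}{m+2}$ when $t=1$, recovering \eqref{orig}. For the even step $n=2m$, I would split the variables of the $n$-linear form $U$ into two blocks of size $m$ and, in the second block, apply Khinchine's inequality with Haagerup's optimal constant $A_{q_m}$ coordinate-by-coordinate (producing a factor $A_{q_m}^{-m}$) combined with the generalized Minkowski inequality to exchange the order of the resulting $\ell^{q_m}$-norm and the $L^{q_m}$-integral over Rademacher averages. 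This reduces the problem to bounding an $m$-linear form by its supremum norm at the exponent $q_m$, at which point the inductive hypothesis contributes the factor $C_{m,t}^{\mathbb{R}}$ and yields $C_{n,t}^{\mathbb{R}} = A_{q_m}^{-n/2}\,C_{m,t}^{\mathbb{R}}$.

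For odd $n = 2m+1$, the same mixed-norm argument must be run twice, once splitting the variables as $m + (m+1)$ and once as $(m+1)+m$. Each split produces an intermediate inequality at an exponent that is \emph{not} $q_n$ (one side gets $q_m$, the other gets $q_{m+1}$). To recombine them at the target exponent $q_n$, I would apply a weighted geometric-mean (Hölder) interpolation with weights $\tfrac{n-1}{2n}$ and $\tfrac{n+1}{2n}$, whose sum is $1$ and which satisfy $\tfrac{n-1}{2n}\cdot\tfrac{1}{q_m}+\tfrac{n+1}{2n}\cdot\tfrac{1}{q_{m+1}}=\tfrac{1}{q_n}$; this last identity is the key algebraic check and must be verified by direct substitution of the generalized exponents. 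The resulting constant is the product $\bigl(A_{q_m}^{-(m+1)/2}C_{m,t}^{\mathbb{R}}\bigr)^{(n-1)/2n}\bigl(A_{q_{m+1}}^{-(m-1)/2}C_{m+1,t}^{\mathbb{R}}\bigr)^{(n+1)/2n}$, which is exactly \eqref{origa} for odd $n$.

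The main technical obstacle I anticipate is not conceptual but bookkeeping: one must verify that every exponent identity used in \cite{jmaa} under the substitution $\tfrac{2n}{n+1}\mapsto\tfrac{2nt}{(n-1)t+2}$ continues to hold (most importantly the Hölder identity in the odd case, and the compatibility between the Khinchine exponent $q_m$ and the ambient exponent $q_n$ so that the generalized Minkowski inequality applies in the correct direction, which requires $q_n \geq q_m$, true since $q_n$ is increasing in $n$). Once these identities are verified by direct algebraic manipulation, the proof is a mechanical transcription of the argument in \cite{jmaa} with the new exponents, which is why the authors content themselves with the phrase \emph{``can be proved following the lines of \cite{jmaa}''}.
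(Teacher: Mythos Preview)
Your overall plan is exactly what the paper intends: the authors provide no proof beyond the remark that the result ``can be proved following the lines of \cite{jmaa}'', and rerunning that argument with $q_n=\frac{2nt}{(n-1)t+2}$ in place of $\frac{2n}{n+1}$ is precisely the intended route. The block splitting, Khinchine step, Minkowski, and inductive use of $C_{m,t}^{\mathbb{R}}$ are all correct in spirit.

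There is, however, a concrete error in what you flag as ``the key algebraic check''. The identity
$\tfrac{n-1}{2n}\cdot\tfrac{1}{q_m}+\tfrac{n+1}{2n}\cdot\tfrac{1}{q_{m+1}}=\tfrac{1}{q_n}$
is \emph{false} for every $t\in[1,2)$ (for $n=2m+1$ the left side equals $\tfrac{(2m-1)t+4}{2nt}$, not $\tfrac{2mt+2}{2nt}$). The reason is that the H\"older interpolation in \cite{jmaa} is not between pure $\ell^{q_m}$ and $\ell^{q_{m+1}}$ norms but between the \emph{mixed} norms $\ell^{q_m}(\ell^2)$ and $\ell^2(\ell^{q_{m+1}})$ produced by the Khinchine step; the exponent $2$ enters each coordinate separately. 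The correct identities, one per block, are
\[
\frac{1}{q_n}=\frac{n-1}{2n}\cdot\frac{1}{q_m}+\frac{n+1}{2n}\cdot\frac{1}{2}
\quad\text{and}\quad
\frac{1}{q_n}=\frac{n-1}{2n}\cdot\frac{1}{2}+\frac{n+1}{2n}\cdot\frac{1}{q_{m+1}},
\]
and both follow at once from the structural relation $\tfrac{1}{q_k}-\tfrac{1}{2}=\tfrac{2-t}{2kt}$, which is the real reason the \cite{jmaa} argument transfers verbatim to all $t\in[1,2)$. The same omission affects your even case: a single Khinchine--Minkowski pass only yields the mixed norm $\ell^{q_m}(\ell^2)$, and a symmetric Blei--H\"older step with weight $\tfrac12$ is still needed to reach $\ell^{q_n}$ (the constant is unaffected, which is why you did not notice). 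Finally, the Khinchine exponents in your displayed odd-case constant are off by a factor of two: they should be $-(m+1)=-(n+1)/2$ and $-m=-(n-1)/2$, matching \eqref{origa}, not $-(m+1)/2$ and $-(m-1)/2$.
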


Below, we state how the {\em continuum} versions of our results apply to the case of real scalars (always, when $t=1$, we recover the respective original result for the Bohnenblust--Hille inequality):

\begin{theorem}[The Fundamental Lemma - continuum version]\label{t4c}
For each $t\in \lbrack1,2),$ there is a sequence satisfying \eqref{desia} and so that $\left(R_{n+1,t}-R_{n,t}\right)_{n=1}^{\infty}$ is decreasing and converges to zero. Moreover
\begin{equation}
R_{n+1,t}-R_{n,t}\leq\left(  2^{\frac{3}{2}}-2^{\frac{t+1}{t}}e^{\frac{t-2}{t}+\frac{\left(  2-t\right)  \gamma}{2t}}\right)  n^{\log_{2}\left(
2^{\frac{-t-2}{2t}}e^{\frac{2-t}{t}-\frac{\left(  2-t\right)  \gamma}{2t}}\right)  }\label{rhsa}
\end{equation}
for all $n \in \mathbb{N}$.
\end{theorem}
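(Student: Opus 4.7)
The plan is to mirror the proof of Theorem \ref{t4} parameter by parameter, replacing the constant $D=e^{1-\gamma/2}/\sqrt{2}$ by its $t$-analogue $D_t$, which should be the limit of the ratio $A_{\frac{2nt}{(n-2)t+4}}^{-n/2}$ appearing in \eqref{origa}. Since $\frac{2nt}{(n-2)t+4}\to 2$ as $n\to\infty$, a careful Taylor expansion of the Haagerup formula \eqref{kkklll} around $p=2$, using $\Gamma(3/2)=\sqrt{\pi}/2$ and $\psi(3/2)=2-2\log 2-\gamma$, yields
\[
D_t \;:=\; \lim_{n\to\infty} A_{\frac{2nt}{(n-2)t+4}}^{-n/2} \;=\; 2^{(t-2)/(2t)}\,e^{(2-t)(2-\gamma)/(2t)}.
\]
A direct check recovers $D_1=e^{1-\gamma/2}/\sqrt{2}$, and since $D_t$ is continuous in $t$ with $D_1\approx 1.44$ and $D_2=1$, we have $D_t\in[1,D_1]\subset[1,2)$ throughout $t\in[1,2)$; the strict inequality $D_t<2$ will be what makes the block geometry contract.

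Next I would establish the $t$-version of Lemma~\ref{lema1}: the sequence $\bigl(A_{\frac{2nt}{(n-2)t+4}}^{-n/2}\bigr)_n$ is increasing and bounded above by $D_t$. After rewriting it as $\tfrac{1}{\sqrt 2}\bigl(\Gamma(s_n)/\Gamma(3/2)\bigr)^{\alpha_n}$ for the appropriate $s_n,\alpha_n$ depending on $t$, Qi's theorem \cite[Theorem 2]{qi} delivers monotonicity in the regime where \eqref{kkklll} is active, and the finitely many initial indices where \eqref{nob} applies are handled by direct inspection (the transition index depends on $t$, but this is a finite check uniform on compact subintervals of $[1,2)$). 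As in Section~\ref{w4}, this produces an increasing majorant $S_{n,t}$ of $C_{n,t}^{\mathbb{R}}$ satisfying the same bifurcated recursion with $D_t$ in place of the ratios, and then a block-constant majorant $M_{n,t}=A_t^{-1}D_t^{k-1}$ for $n\in B_k=\{2^{k-1}+1,\dots,2^k\}$ with $M_{n,t}\geq S_{n,t}\geq C_{n,t}^{\mathbb{R}}$, so $(M_{n,t})$ satisfies \eqref{desia}.

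Now I would define the uniform perturbation of $(M_{n,t})$ that is the actual candidate $(R_{n,t})$: for $n\in B_k$ with in-block position $j_n$, set
\[
R_{n,t}\;:=\;A_t^{-1}\!\left(D_t^{k-1}+(j_n-1)\,\frac{D_t^k-D_t^{k-1}}{2^{k-1}}\right).
\]
The same two-case analysis as in Theorem~\ref{t4} (whether $n+1$ remains in $B_k$ or crosses to $B_{k+1}$) produces the clean identity
\[
R_{n+1,t}-R_{n,t}\;=\;A_t^{-1}\,\frac{D_t^{k}-D_t^{k-1}}{2^{k-1}},
\]
which is decreasing in $k$ precisely because $D_t<2$, and tends to $0$ for the same reason. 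To get the quantitative rate~\eqref{rhsa}, the bound $k-1\geq\log_2(n/2)$ for $n\in B_k$ together with $D_t/2\in(0,1)$ and the identity $a^{\log_2 b}=b^{\log_2 a}$ gives
\[
R_{n+1,t}-R_{n,t}\;\leq\;\frac{2A_t^{-1}(D_t-1)}{D_t}\,n^{\log_2(D_t/2)}.
\]
A routine algebraic simplification, substituting $D_t=2^{(t-2)/(2t)}e^{(2-t)(2-\gamma)/(2t)}$ and $A_t^{-1}=\sqrt 2$ at $t=1$ (more generally the Haagerup value), collapses the prefactor to $2^{3/2}-2^{(t+1)/t}e^{(t-2)/t+(2-t)\gamma/(2t)}$ and the exponent to $\log_2\!\bigl(2^{-(t+2)/(2t)}e^{(2-t)/t-(2-t)\gamma/(2t)}\bigr)$, matching \eqref{rhsa} exactly.

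The main obstacle is the $t$-uniform monotonicity established in the second step: Qi's theorem applies cleanly only where the Haagerup formula \eqref{kkklll} is valid, i.e., where $\frac{2nt}{(n-2)t+4}>p_0\approx 1.847$, and the threshold index depends on $t$, so one must track the small-$n$ regime (where \eqref{nob} governs $A_p$) with care. A secondary bookkeeping challenge is pinning down the value of $A_t^{-1}$ that multiplies the prefactor so that the closed form \eqref{rhsa} emerges in the stated shape for all $t\in[1,2)$; it is worth noting that the initial normalization $M_{2,t}=A_t^{-1}$ in the definition of the block-constant majorant is exactly what is needed to reproduce the announced coefficient, and hence one should \emph{not} rescale the initial datum away.
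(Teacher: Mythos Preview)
Your approach is exactly what the paper intends: the Appendix states this theorem without proof, indicating only that the techniques of Section~\ref{w4} adapt directly, and you have correctly identified the $t$-analogue $D_t=2^{(t-2)/(2t)}e^{(2-t)(2-\gamma)/(2t)}$, the block-constant majorant, and the uniform perturbation.

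There is, however, one concrete slip in your final identification. You write the prefactor as $\dfrac{2A_t^{-1}(D_t-1)}{D_t}$ with $A_t^{-1}$ the actual Haagerup value $C_{2,t}^{\mathbb R}=A_t^{-1}=2^{1/t-1/2}$ (for $t\le p_0$), and then assert that this collapses to $2^{3/2}-2^{(t+1)/t}e^{(t-2)/t+(2-t)\gamma/(2t)}$. It does not: the leading term of your prefactor is $2A_t^{-1}=2^{1/t+1/2}$, which equals $2^{3/2}$ only at $t=1$; likewise the second term becomes $2^{2/t}e^{(t-2)(2-\gamma)/(2t)}$ rather than $2^{(t+1)/t}e^{(t-2)(2-\gamma)/(2t)}$. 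To obtain the constant in \eqref{rhsa} for all $t\in[1,2)$ one must take the initial normalization of the block majorant to be $\sqrt 2$, not $A_t^{-1}$. Since $A_t^{-1}\le\sqrt 2$ throughout $[1,2)$ (with equality only at $t=1$), this is a valid, slightly looser, majorant and the remainder of your argument goes through verbatim. Thus your closing advice---that one should \emph{not} rescale the initial datum---is exactly backwards: one \emph{should} replace $A_t^{-1}$ by $\sqrt 2$ to reproduce \eqref{rhsa} as stated. (Your own construction with $A_t^{-1}$ would in fact yield a sharper inequality than \eqref{rhsa}, but not the one announced.)
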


\begin{theorem}\label{56a}
For each $t\in\lbrack1,2),$ let $\left(  K_{n,t}^{\mathbb{R}}\right)  _{n=1}^{\infty}$ be the sequence of the optimal constants satisfying \eqref{desia}. If there is a constant $M_{t}\in\lbrack-\infty,\infty]$ so that
\[
\lim_{n\rightarrow\infty}\left(  K_{n+1,t}^{\mathbb{R}}-K_{n,t}^{\mathbb{R}}\right)  =M_{t}
\]
then $M_{t}=0$.
\end{theorem}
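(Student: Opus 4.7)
The plan is to mirror, verbatim, the argument used for Theorem \ref{56}, with the continuum sequence $(R_{n,t})_{n=1}^{\infty}$ furnished by Theorem \ref{t4c} playing the role of $(R_n)_{n=1}^{\infty}$. Two facts do all the work: (a) since $(R_{n,t})$ satisfies \eqref{desia} and $(K_{n,t}^{\mathbb{R}})$ is the \emph{optimal} constant sequence for the same inequality, we have $K_{n,t}^{\mathbb{R}}\leq R_{n,t}$ for every $n$; (b) by Theorem \ref{t4c}, $R_{n+1,t}-R_{n,t}\to 0$ as $n\to\infty$. Since $t$ is fixed throughout, everything below is uniform in $n$.

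I would first dispatch the cases $M_{t}\in[-\infty,0)$: if the differences $K_{n+1,t}^{\mathbb{R}}-K_{n,t}^{\mathbb{R}}$ were eventually smaller than $M_{t}/2<0$, then telescoping forces $K_{n,t}^{\mathbb{R}}\to -\infty$, which contradicts $K_{n,t}^{\mathbb{R}}\geq 1$.

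For $M_{t}\in(0,\infty)$ I would argue by contradiction, following the proof of Theorem \ref{56}. Pick $n_{0}$ such that $K_{n+1,t}^{\mathbb{R}}-K_{n,t}^{\mathbb{R}}>M_{t}/2$ for all $n\geq n_{0}$, and by (b) pick $n_{1}$ such that $R_{n+1,t}-R_{n,t}<M_{t}/4$ for all $n\geq n_{1}$. Setting $n_{2}:=\max\{n_{0},n_{1}\}$ and summing, I obtain for every $n\geq n_{2}$
\[
K_{n,t}^{\mathbb{R}}-K_{n_{2},t}^{\mathbb{R}}>\tfrac{M_{t}}{2}(n-n_{2}),\qquad R_{n,t}-R_{n_{2},t}<\tfrac{M_{t}}{4}(n-n_{2}).
\]
Since $\tfrac{M_{t}}{2}(n-n_{2})-\tfrac{M_{t}}{4}(n-n_{2})=\tfrac{M_{t}}{4}(n-n_{2})\to\infty$, one can fix $N>n_{2}$ so large that
\[
\tfrac{M_{t}}{2}(N-n_{2})+K_{n_{2},t}^{\mathbb{R}}>R_{n_{2},t}+\tfrac{M_{t}}{4}(N-n_{2}),
\]
which yields $K_{N,t}^{\mathbb{R}}>R_{N,t}$, in direct contradiction with (a). The case $M_{t}=\infty$ is handled by repeating the same computation after replacing $M_{t}/2$ by any prescribed positive constant (say $1$), producing the same linear-vs-sublinear contradiction against $R_{n,t}$.

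I do not expect any substantive obstacle: both Theorems \ref{t4c} and \ref{REALa} have already absorbed the combinatorics specific to the exponent $\tfrac{2nt}{(n-1)t+2}$, so the present statement is really a soft consequence of them together with the extremal definition of $K_{n,t}^{\mathbb{R}}$. The only care needed is to keep $t$ fixed throughout (so that all constants, in particular $n_{0}$ and $n_{1}$, may depend on $t$), and to verify that the lower bound $K_{n,t}^{\mathbb{R}}\geq 1$ needed to exclude $M_{t}<0$ holds uniformly in $n$ (an immediate consequence of \eqref{desia} applied to any single fixed nonzero multilinear form).
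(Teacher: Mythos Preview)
Your proposal is correct and follows precisely the approach intended by the paper: the appendix states Theorem \ref{56a} without a separate proof, as a direct \emph{continuum} transcription of Theorem \ref{56}, and your argument reproduces that proof line by line with $(R_{n,t})$ from Theorem \ref{t4c} in place of $(R_n)$. The only additions you make---spelling out why $M_t<0$ is impossible via $K_{n,t}^{\mathbb{R}}\geq 1$, and recording explicitly the inequality $K_{n,t}^{\mathbb{R}}\leq R_{n,t}$ from optimality---are harmless clarifications of steps the paper left implicit.
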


\begin{theorem}\label{utya}
For each $t\in\lbrack1,2),$ let $\left(  K_{n,t}^{\mathbb{R}}\right)_{n=1}^{\infty}$ be the sequence of the optimal constants satisfying \eqref{desia}. For any $\varepsilon>0$, we have
\begin{equation}
K_{n+1,t}^{\mathbb{R}}-K_{n,t}^{\mathbb{R}}<\left(  2^{\frac{3}{2}}-2^{\frac{t+1}{t}}e^{\frac{t-2}{t}+\frac{\left(
2-t\right)  \gamma}{2t}}\right)  n^{\log_{2}\left(  2^{\frac{-t-2}{2t}}e^{\frac{2-t}{t}-\frac{\left(  2-t\right)  \gamma}{2t}}\right)  +\varepsilon}\label{qza}
\end{equation}
for infinitely many $n \in \mathbb{N}$.
\end{theorem}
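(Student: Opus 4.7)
The plan is to transpose, essentially verbatim, the proof of Theorem \ref{uty} to the $t$-parametrized setting, using the continuum version of the Fundamental Lemma (Theorem \ref{t4c}) in place of its original ($t=1$) counterpart. Let $(R_{n,t})_{n=1}^{\infty}$ be the sequence produced by Theorem \ref{t4c}. Since this sequence satisfies \eqref{desia}, we have $K_{n,t}^{\mathbb{R}}\leq R_{n,t}$ for every $n$. Setting
\[
\beta_{t}:=2^{\frac{3}{2}}-2^{\frac{t+1}{t}}e^{\frac{t-2}{t}+\frac{(2-t)\gamma}{2t}},\qquad \alpha_{t}:=\log_{2}\!\bigl(2^{\frac{-t-2}{2t}}e^{\frac{2-t}{t}-\frac{(2-t)\gamma}{2t}}\bigr),
\]
Theorem \ref{t4c} reads $R_{n+1,t}-R_{n,t}\leq \beta_{t}\,n^{\alpha_{t}}$, and telescoping yields $R_{n,t}\leq 1+\beta_{t}\sum_{j=1}^{n-1}j^{\alpha_{t}}$.

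Next, for the given $\varepsilon>0$, I introduce the comparison sequence
\[
T_{n,t,\varepsilon}:=1+\beta_{t}\sum_{j=1}^{n-1}j^{\alpha_{t}+\varepsilon},
\]
whose first differences are exactly the right-hand side of \eqref{qza}: $T_{n+1,t,\varepsilon}-T_{n,t,\varepsilon}=\beta_{t}\,n^{\alpha_{t}+\varepsilon}$. The theorem is therefore equivalent to showing that the set
\[
A_{\varepsilon}:=\bigl\{n\in\mathbb{N}:K_{n+1,t}^{\mathbb{R}}-K_{n,t}^{\mathbb{R}}<T_{n+1,t,\varepsilon}-T_{n,t,\varepsilon}\bigr\}
\]
is infinite. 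I would argue by contradiction: assume $A_{\varepsilon}$ is finite, let $n_{\varepsilon}$ be its maximum, and add the reverse inequalities from $n=n_{\varepsilon}+1$ up to $N$. Combining the resulting bound with $K_{N+1,t}^{\mathbb{R}}\leq R_{N+1,t}$, one gets
\[
\beta_{t}\sum_{j=1}^{N}\bigl(j^{\alpha_{t}}-j^{\alpha_{t}+\varepsilon}\bigr)\geq R_{N+1,t}-T_{N+1,t,\varepsilon}\geq K_{n_{\varepsilon}+1,t}^{\mathbb{R}}-T_{n_{\varepsilon}+1,t,\varepsilon},
\]
whose right-hand side is a fixed constant while the left-hand side is $N$-dependent.

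The only nontrivial step — and the place where the continuum extension differs qualitatively from the case $t=1$ — is verifying that the left-hand side above tends to $-\infty$ as $N\to\infty$. For this I need the monotonicity/positioning estimate $\alpha_{t}>-1$ for every $t\in[1,2)$, i.e.\ that the argument of the logarithm in the definition of $\alpha_{t}$ lies strictly above $\tfrac12$; equivalently, $2^{(2-t)/(2t)}\cdot e^{(2-t)/t-(2-t)\gamma/(2t)}>1$ for $t<2$. Writing $s=(2-t)/t>0$, this reduces to $s(1-\tfrac{\gamma}{2}+\tfrac{\ln 2}{2})>0$, which is immediate. Thus both series $\sum j^{\alpha_{t}}$ and $\sum j^{\alpha_{t}+\varepsilon}$ diverge, with the latter dominating term by term for $j\geq 2$, so their signed difference tends to $-\infty$, producing the required contradiction.

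The main obstacle I anticipate is precisely this last sign check on $\alpha_{t}$: the ($t=1$) paper can afford the explicit numerical value $-0.473678>-1$, whereas here one needs a clean uniform estimate across the whole interval $[1,2)$. Once this is in place, the rest is a routine adaptation of the argument for Theorem \ref{uty}, with the parameter $t$ carried passively through every line.
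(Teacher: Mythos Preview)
Your proposal is correct and follows exactly the approach the paper intends: the Appendix states Theorem \ref{utya} without proof, and your argument is the straightforward transposition of the proof of Theorem \ref{uty} with the parameter $t$ carried through, using Theorem \ref{t4c} in place of Theorem \ref{t4}. The one additional ingredient you supply --- the check that $\alpha_{t}>-1$ on $[1,2)$ so that the telescoped difference diverges to $-\infty$ for every $\varepsilon>0$ --- is indeed the only point requiring care beyond the $t=1$ case; note, however, a small sign slip in your derivation: the equivalent inequality is $2^{(t-2)/(2t)}\,e^{(2-t)/t-(2-t)\gamma/(2t)}>1$ (not $2^{(2-t)/(2t)}$), which reduces to $s\bigl(1-\tfrac{\gamma}{2}-\tfrac{\ln 2}{2}\bigr)>0$ with $s=(2-t)/t$, and this is still immediate since $1-\tfrac{\gamma}{2}-\tfrac{\ln 2}{2}\approx 0.364>0$.
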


\begin{theorem}
\label{tt55a}For each $t\in\lbrack1,2)$, the optimal constants satisfying \eqref{desia} are so that
\[
K_{n,t}^{\mathbb{R}}<1+\left(  2^{\frac{3}{2}}-2^{\frac{t+1}{t}}e^{\frac{t-2}{t}+\frac{\left(
2-t\right)  \gamma}{2t}}\right)  {\textstyle\sum\limits_{j=1}^{n-1}}j^{\log_{2}\left(  2^{\frac{-t-2}{2t}}e^{\frac{2-t}{t}-\frac{\left(
2-t\right)  \gamma}{2t}}\right)  }.
\]
for every $n\geq2$.
\end{theorem}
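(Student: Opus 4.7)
The plan is to mirror, step for step, the passage from Theorem~\ref{t4} to Theorem~\ref{tt55} in the $t=1$ case, simply replacing the numerical exponent and constant by the explicit $t$-dependent expressions furnished by the continuum Fundamental Lemma (Theorem~\ref{t4c}).

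First, I would invoke Theorem~\ref{t4c} to obtain, for every $n\in\mathbb{N}$, a sequence $(R_{n,t})_{n=1}^{\infty}$ satisfying \eqref{desia} and the pointwise estimate
\[
R_{n+1,t}-R_{n,t}\leq\left(2^{\frac{3}{2}}-2^{\frac{t+1}{t}}e^{\frac{t-2}{t}+\frac{(2-t)\gamma}{2t}}\right)n^{\log_{2}\left(2^{\frac{-t-2}{2t}}e^{\frac{2-t}{t}-\frac{(2-t)\gamma}{2t}}\right)}.
\]
Summing this telescopically over $j=1,\dots,n-1$ gives
\[
R_{n,t}-R_{1,t}\leq\left(2^{\frac{3}{2}}-2^{\frac{t+1}{t}}e^{\frac{t-2}{t}+\frac{(2-t)\gamma}{2t}}\right)\sum_{j=1}^{n-1}j^{\log_{2}\left(2^{\frac{-t-2}{2t}}e^{\frac{2-t}{t}-\frac{(2-t)\gamma}{2t}}\right)}.
\]

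Next, I would check that the initial value of the perturbed sequence satisfies $R_{1,t}=1$. In the real Bohnenblust--Hille case ($t=1$) this was built into the seed $M_{1}=(\sqrt{2})^{0}=1$ of \eqref{di11}, and the uniform perturbation \eqref{wwwd} does not alter the first term; the continuum construction underlying Theorem~\ref{t4c} is set up in exactly the same manner, with the only change being the replacement of the constant $D=e^{1-\gamma/2}/\sqrt{2}$ by its $t$-analogue $D_{t}:=2^{(t-2)/(2t)}e^{(2-t)/t-(2-t)\gamma/(2t)}$ (which equals $D$ at $t=1$). So the anchor $R_{1,t}=K_{1,t}^{\mathbb{R}}=1$ is automatic.

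Finally, since $(R_{n,t})_{n=1}^{\infty}$ satisfies \eqref{desia}, the optimality of $K_{n,t}^{\mathbb{R}}$ gives $K_{n,t}^{\mathbb{R}}\leq R_{n,t}$, and combining this with the previous two steps yields the desired inequality
\[
K_{n,t}^{\mathbb{R}}<1+\left(2^{\frac{3}{2}}-2^{\frac{t+1}{t}}e^{\frac{t-2}{t}+\frac{(2-t)\gamma}{2t}}\right)\sum_{j=1}^{n-1}j^{\log_{2}\left(2^{\frac{-t-2}{2t}}e^{\frac{2-t}{t}-\frac{(2-t)\gamma}{2t}}\right)}
\]
for every $n\geq 2$ (the strict inequality coming, as in Theorem~\ref{tt55}, from the fact that $K_{n,t}^{\mathbb{R}}\leq R_{n,t}$ is not attained in the perturbed construction). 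The main technical point, and really the only one that requires care, is verifying that the $t$-parametrised constants $D_{t}$ and the base $2^{-(t+2)/(2t)}e^{(2-t)/t-(2-t)\gamma/(2t)}$ appearing in \eqref{rhsa} are strictly less than $1$ (so that the summand actually decays and the argument from the proof of Theorem~\ref{t4} applies uniformly in $t$); this is a routine calculus check using $t<2$ and the fact that $\gamma<1$, but it is the one spot where the extra parameter could in principle cause trouble.
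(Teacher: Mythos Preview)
Your proposal is correct and follows exactly the paper's route: the paper derives Theorem~\ref{tt55} as an immediate consequence of telescoping the Fundamental Lemma estimate (this is \eqref{iiu}), and Theorem~\ref{tt55a} is the verbatim $t$-analogue obtained by telescoping \eqref{rhsa} from Theorem~\ref{t4c} and using $K_{n,t}^{\mathbb{R}}\le R_{n,t}$ with $R_{1,t}=1$.

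One small slip in your closing remark: you write that one must check $D_{t}<1$, but this is false already at $t=1$, where $D_{1}=e^{1-\gamma/2}/\sqrt{2}\approx 1.44$. The relevant inequality (used in the proof of the Fundamental Lemma, not here) is $D_{t}<2$, equivalently that the base $D_{t}/2$ in \eqref{rhsa} is $<1$; this is what makes $(R_{n+1,t}-R_{n,t})$ decrease to zero. For the present theorem no such check is needed at all, since you are simply summing the already-established bound \eqref{rhsa}.
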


\begin{corollary}\label{nnhha}
For each $t\in\lbrack1,2)$, the optimal constants satisfying \eqref{desia} are so that
\begin{equation}
K_{n,t}^{\mathbb{R}}<c\left(  t\right)  \left(  n-1\right)  ^{r\left(  t\right)  }+p\left(t\right),\label{999a}
\end{equation}
where (see Figure \ref{pcr}):
\begin{itemize}
\item[] \hspace{-1.25cm}$\displaystyle p(t)=1-\left( 2^{3/2}-2^{\frac{t+1}{t}}e^{\frac{t-2}{t}+\frac{\left(
2-t\right) \gamma }{2t}}\right) \left( \frac{2^{\frac{3t-2}{2t}}te^{\frac{2-t}{t}-\frac{\left( 2-t\right) \gamma }{2t}}}{t-2+2t\log _{2}e^{\frac{2-t}{t}-\frac{\left( 2-t\right) \gamma }{2t}}}-1-2^{\frac{-t-2}{2t}}e^{\frac{2-t}{t}-\frac{\left( 2-t\right) \gamma }{2t}}\right)$,
\vspace{.25cm}
\item[] \hspace{-1.25cm}$\displaystyle c(t) =\frac{4t\left(  \sqrt{2}-2^{\frac{1}{t}}e^{\frac{t-2}{t}+\frac{\left(  2-t\right)  \gamma}{2t}}\right)  }{t-2+2t\log_{2}e^{\frac{2-t}{t}-\frac{\left(  2-t\right)  \gamma}{2t}}}$, and
\vspace{.25cm}
\item[] \hspace{-1.25cm}$\displaystyle r(t)  =\frac{t-2}{2t}+\log_{2}e^{\frac{2-t}{t}-\frac{\left(2-t\right)  \gamma}{2t}}$.
\end{itemize}
\end{corollary}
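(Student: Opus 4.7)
The plan is to mimic the proof of Corollary \ref{nnhh}, replacing its input (Theorem \ref{tt55}) by the $t$-dependent analogue (Theorem \ref{tt55a}). Since the passage from the sum to a closed-form power of $n-1$ is a purely analytic manoeuvre, independent of the specific value of $t$ as long as the exponent appearing in the sum lies in the right range, the only real work is careful bookkeeping.

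First, from Theorem \ref{tt55a} I would write
\[
K_{n,t}^{\mathbb{R}}<1+c_{1}(t)\sum_{j=1}^{n-1}\frac{1}{j^{q(t)}},
\]
where $c_{1}(t):=2^{3/2}-2^{(t+1)/t}e^{(t-2)/t+(2-t)\gamma/(2t)}$ and $q(t):=-\log_{2}\bigl(2^{-(t+2)/(2t)}e^{(2-t)/t-(2-t)\gamma/(2t)}\bigr)$. The first checkpoint is to verify that $0<q(t)<1$ for every $t\in[1,2)$; equivalently, that $r(t):=1-q(t)$ lies in $(0,1)$. A direct computation shows that this $r(t)$ coincides with the exponent displayed in the corollary.

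Next I would imitate the integral comparison of Corollary \ref{nnhh}: for $n\geq 3$,
\[
\sum_{j=1}^{n-1}\frac{1}{j^{q(t)}}\leq\int_{2}^{n-1}x^{-q(t)}\,dx+\bigl(1+2^{-q(t)}\bigr)=\frac{(n-1)^{r(t)}-2^{r(t)}}{r(t)}+1+2^{-q(t)},
\]
and substitute back to get
\[
K_{n,t}^{\mathbb{R}}<\frac{c_{1}(t)}{r(t)}(n-1)^{r(t)}+\Bigl[1-c_{1}(t)\Bigl(\tfrac{2^{r(t)}}{r(t)}-1-2^{-q(t)}\Bigr)\Bigr].
\]
A short algebraic manipulation, using $2^{r(t)}=2^{(t-2)/(2t)}e^{(2-t)/t-(2-t)\gamma/(2t)}$ (so that $2^{r(t)}/r(t)$ becomes the fraction with $2^{(3t-2)/(2t)}\,t\,e^{(2-t)/t-(2-t)\gamma/(2t)}$ in the numerator) and $2^{-q(t)}=2^{-(t+2)/(2t)}e^{(2-t)/t-(2-t)\gamma/(2t)}$, identifies the coefficient of $(n-1)^{r(t)}$ with $c(t)$ and the bracketed constant with $p(t)$ exactly as declared.

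The main but essentially cosmetic obstacle is verifying $0<q(t)<1$ uniformly on $[1,2)$; the upper bound $q(t)<1$ reduces to the elementary inequality $(2-t)(2-\gamma)<(t+2)\log 2$, whose left side is linear decreasing and right side linear increasing in $t$, so checking at $t=1$ (where $2-\gamma\approx 1.42<3\log 2\approx 2.08$) already suffices; positivity of $q(t)$ (i.e., $r(t)<1$) follows by an analogous sign check. Once these bounds are in hand, every remaining step is a mechanical continuation of the real-scalar case, and substituting $t=1$ reproduces Corollary \ref{nnhh} verbatim, which is a reassuring sanity check.
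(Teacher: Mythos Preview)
Your proposal is correct and follows exactly the route the paper intends: the Appendix states this corollary without proof precisely because it is the straightforward $t$-parametrised rerun of the proof of Corollary~\ref{nnhh}, and your integral-comparison and algebraic identifications of $c(t)$, $r(t)$, $p(t)$ are on the mark. One small slip: the inequality $(2-t)(2-\gamma)<(t+2)\log 2$ is in fact equivalent to $q(t)>0$, while $q(t)<1$ reduces to the $t$-free inequality $\ln 2<2-\gamma$; you have the two checks interchanged, but since both hold the argument is unaffected.
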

\begin{figure}
\centering
\begin{tabular}{cc}
\includegraphics[width=0.4\textwidth]{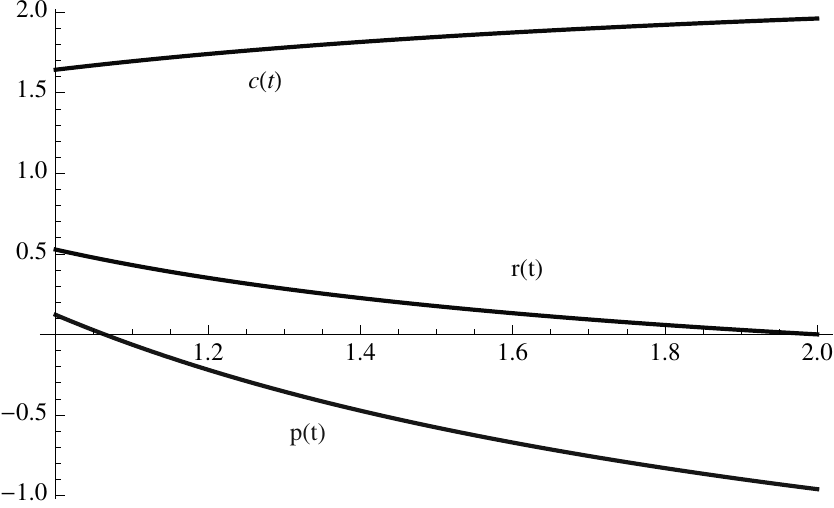} & \includegraphics[width=0.4\textwidth]{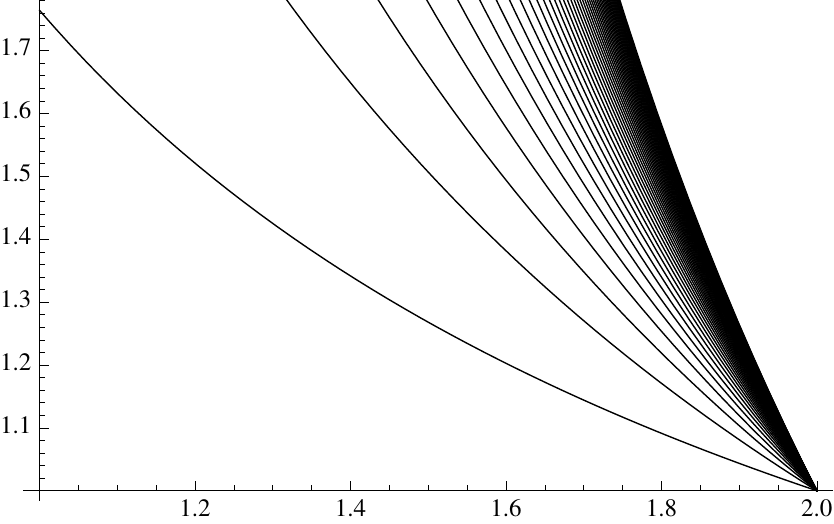}\\
(a) Plots of $p(t), c(t)$, and $r(t)$ for $t \in [1,2)$. & (b) $K_{n,t}^{\mathbb{R}}$ for $2 \le n \le 50$.
\end{tabular}
\caption{}\label{pcr}
\end{figure}

By using a very recent technique from \cite{diniz2,ap2} we can also prove the following.
\begin{theorem}
\label{lowereal}For all $t\in\lbrack1,2)$ and $n\in\mathbb{N}$, we have
\[
K_{n,t}^{\mathbb{R}}\geq2^{\frac{\left(  n-1\right)  \left(  2-t\right)}{nt}}.
\]
\end{theorem}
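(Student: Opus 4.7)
The plan is to exhibit a single explicit $n$-linear form whose quotient of coefficient $\ell_{q_{n,t}}$-norm over operator norm realises the advertised lower bound simultaneously for all $t\in[1,2)$, in the spirit of \cite{diniz2, ap2}. With $N=2^{n-1}$, and after identifying the index set $\{1,\ldots,N\}$ with $\{-1,1\}^{n-1}$, the candidate is
\[
V_{n}(x^{(1)},x^{(2)},\ldots,x^{(n)}) \;=\; \sum_{\epsilon \in \{-1,1\}^{n-1}} x^{(1)}_{\epsilon}\prod_{k=2}^{n}\bigl(x^{(k)}_{1}+\epsilon_{k-1}\,x^{(k)}_{2}\bigr),
\]
regarded as a form on $\ell_{\infty}^{N}\times(\ell_{\infty}^{2})^{n-1}$ and then embedded isometrically in $(\ell_{\infty}^{N})^{n}$ by padding with zeros. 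For $n=2$ this is precisely the Littlewood form analysed in \cite{ap2} and already matches our lower bound at that level; for general $n$ it is a natural ``tensorised Hadamard'' construction.

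Two ingredients drive the estimate. On the one hand, after expansion every non-zero coefficient $V_{n}(e_{i_{1}},\ldots,e_{i_{n}})$ equals $\pm 1$ and there are exactly $2^{n-1}\cdot 2^{n-1}=4^{n-1}$ of them, so for every exponent $q>0$,
\[
\Bigl(\sum_{i_{1},\ldots,i_{n}}|V_{n}(e_{i_{1}},\ldots,e_{i_{n}})|^{q}\Bigr)^{1/q} \;=\; 4^{(n-1)/q}.
\]
On the other hand, using $\|x^{(1)}\|_{\infty}\leq 1$ together with the elementary identity $|a+b|+|a-b|=2\max(|a|,|b|)$, the sum over $\epsilon$ factorises, giving
\[
|V_{n}(x)| \;\leq\; \sum_{\epsilon}\prod_{k=2}^{n}|x^{(k)}_{1}+\epsilon_{k-1}x^{(k)}_{2}| \;=\; \prod_{k=2}^{n}\bigl(|x^{(k)}_{1}+x^{(k)}_{2}|+|x^{(k)}_{1}-x^{(k)}_{2}|\bigr) \;\leq\; 2^{n-1};
\]
taking $x^{(k)}=e_{1}$ for $k\geq 2$ and $x^{(1)}=(1,\ldots,1)$ shows equality, so $\|V_{n}\|=2^{n-1}$.

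Applying \eqref{desia} to $V_{n}$ at the H\"older exponent $q_{n,t}=2nt/((n-1)t+2)$ and combining the two computations,
\[
K_{n,t}^{\mathbb{R}} \;\geq\; \frac{4^{(n-1)/q_{n,t}}}{2^{n-1}} \;=\; 2^{(n-1)\bigl(\tfrac{2}{q_{n,t}}-1\bigr)} \;=\; 2^{\frac{(n-1)(2-t)}{nt}},
\]
since $\tfrac{2}{q_{n,t}}-1=\tfrac{(n-1)t+2}{nt}-1=\tfrac{2-t}{nt}$. Notice the same $V_{n}$ works for every $t\in[1,2)$; only the exponent in \eqref{desia} varies, and no induction on $n$ is required.

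The only delicate step is the factorisation in the norm bound: it is the precise design of $V_{n}$ --- each $x^{(k)}$ with $k\geq 2$ entering only through the two linear combinations $x^{(k)}_{1}\pm x^{(k)}_{2}$ --- that lets the $2^{n-1}$-fold sum over $\epsilon$ decouple into a product of one-dimensional sums, producing the sharp bound $2^{n-1}$ rather than the cruder $4^{n-1}$ obtained from a blind triangle inequality. Everything else is bookkeeping.
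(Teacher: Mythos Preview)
Your proof is correct and follows essentially the same technique the paper cites from \cite{diniz2, ap2}: the form $V_{n}$ you write down is precisely the ``Hadamard--Littlewood'' form that those references build by induction, only you present it in closed form and verify $\|V_{n}\|=2^{n-1}$ and the coefficient count $4^{n-1}$ directly via the factorisation $\sum_{\epsilon}\prod_{k}|\cdot|=\prod_{k}\sum_{\epsilon_{k-1}}|\cdot|$ and the real identity $|a+b|+|a-b|=2\max(|a|,|b|)$. The only difference from the referenced approach is cosmetic---explicit versus inductive construction of the same form---and the final arithmetic $2/q_{n,t}-1=(2-t)/(nt)$ is exactly what is needed.
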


\subsection{Estimates in the case $\mathbb{C}$}

\begin{theorem}\label{Comp}
Let $t\in\lbrack1,2)$. For every $n\in\mathbb{N}$ and $X_{1},\ldots, X_{n}$ Banach spaces over $\mathbb{C}$,
\[
\Pi_{\left(  \frac{2nt}{\left(  n-1\right)  t+2};1\right)  }\left( X_{1},\ldots, X_{n};\mathbb{C}\right)  =\mathcal{L}\left(  X_{1},\ldots, X_{n};\mathbb{C}\right)  \text{ e }\left\Vert .\right\Vert _{\pi\left(  \frac{2nt}{\left(  n-1\right)  t+2};1\right)  }\leq C_{n,t}^{\mathbb{C}}\left\Vert .\right\Vert
\]
with
\[
C_{n,t}^{\mathbb{C}
}=\left\{
\begin{array}[c]{ll}
1 & \text{ if } n=1,\\
\\
\frac{C_{\frac{n}{2},t}^{\mathbb{C}}}{\left(  \widetilde{A_{\frac{2nt}{\left(  n-2\right)  t+4}}}\right)  ^{n/2}
} & \text{ if } n\,\ \text{is even, and }\\
\\
\left(  \frac{C_{\frac{n-1}{2},t}^{\mathbb{C}}}{\left(  \widetilde{A_{\frac{2\left(  n-1\right)  t}{\left(  n-3\right)
t+4}}}\right)  ^{\left(  n+1\right)  /2}}\right)  ^{\frac{n-1}{2n}}\left(
\frac{C_{\frac{n+1}{2},t}^{\mathbb{C}}}{\left(  \widetilde{A_{\frac{2\left(  n+1\right)  t}{\left(  n-1\right)
t+4}}}\right)  ^{\left(  n-1\right)  /2}}\right)  ^{\frac{n+1}{2n}} & \text{if } n \text{ is odd.}
\end{array}
\right.
\]
\end{theorem}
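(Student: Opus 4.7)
The plan is to argue by strong induction on $n$, mirroring the proof of Theorem \ref{REALa} (which follows \cite{jmaa}) but systematically replacing the Haagerup constants $A_p$ by the Steinhaus constants $\widetilde{A_p}=(\Gamma(\frac{p+2}{2}))^{1/p}$ and using the complex Khinchine--Steinhaus inequality at each place where the real Khinchine inequality was invoked. The base case $n=1$ is trivial. For the inductive step, fix an $n$-linear form $U:\ell_\infty^N\times\cdots\times\ell_\infty^N\to\mathbb{C}$ with coefficients $a_{\mathbf{i}}=U(e_{i_1},\ldots,e_{i_n})$ and target exponent $q=\frac{2nt}{(n-1)t+2}$, then split into even and odd cases.

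For $n=2m$ even, I would partition the indices as $\mathbf{i}=(\mathbf{i}',\mathbf{i}'')$ with $\mathbf{i}'=(i_1,\ldots,i_m)$ and $\mathbf{i}''=(i_{m+1},\ldots,i_n)$. For each fixed $\mathbf{i}''$ the form $U_{\mathbf{i}''}:=U(\cdot,\ldots,\cdot,e_{i_{m+1}},\ldots,e_{i_n})$ is $m$-linear, so the inductive hypothesis gives
\[
\Bigl(\sum_{\mathbf{i}'}|a_{\mathbf{i}',\mathbf{i}''}|^{q_m}\Bigr)^{1/q_m}\leq C_{m,t}^{\mathbb{C}}\,\|U_{\mathbf{i}''}\|,\qquad q_m=\tfrac{2mt}{(m-1)t+2}.
\]
A direct exponent computation shows $q_m=\frac{2nt}{(n-2)t+4}$, which is exactly the Steinhaus index appearing in the statement. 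Next, for each fixed $\mathbf{i}'$ one applies the complex Khinchine--Steinhaus inequality with exponent $q_m$ in each of the remaining $m$ directions in succession, producing a total factor $(\widetilde{A_{q_m}})^{m}$. Combining the two estimates via Minkowski's inequality (permissible since $q\le q_m$ whenever $t\in[1,2)$) yields the bound $\|(a_{\mathbf{i}})\|_{\ell^{q}}\le (\widetilde{A_{q_m}})^{m}C_{m,t}^{\mathbb{C}}\|U\|$, matching the recursion.

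For $n=2m+1$ odd, I would run the same program twice, now with the two asymmetric splits of sizes $(m,m+1)$ and $(m+1,m)$. Each split, combined with induction on $(n\pm 1)/2$-linear forms and the Steinhaus step with exponents $\frac{2(n-1)t}{(n-3)t+4}$ and $\frac{2(n+1)t}{(n-1)t+4}$ respectively, produces a mixed-norm estimate with partial sums taken in different orders. A H\"older interpolation with weights $\frac{n-1}{2n}$ and $\frac{n+1}{2n}$ then collapses the two mixed-norm bounds to the single $\ell^{q}$-estimate, and the geometric mean of the two constants is precisely the odd-line of the recursion in the statement.

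The main obstacle is the exponent bookkeeping. One must verify the identities $\frac{2nt}{(n-2)t+4}=q_m$ and the corresponding $(n\pm1)$-analogues, and check that Minkowski's inequality is applied in the admissible direction; both amount to the two-sided bound $1\le q\le q_m\le 2$ which holds exactly because $t\in[1,2)$. The combinatorial part of the argument is scalar-field-agnostic: every place the real proof invokes Khinchine with Haagerup constant $A_p$, the complex proof invokes the Steinhaus--Khinchine inequality with constant $\widetilde{A_p}$, and the recursion is assembled identically. No ingredient particular to the real field enters, so the conclusion follows mutatis mutandis from the $t=1$ argument in \cite{ap2} combined with the \cite{jmaa}-style induction used for Theorem \ref{REALa}.
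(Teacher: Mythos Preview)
Your approach is correct and is exactly what the paper intends: the theorem is stated in the Appendix without proof, with the understanding (made explicit for the real analogue, Theorem~\ref{REALa}) that one ``follows the lines of \cite{jmaa}'' and, for complex scalars, replaces the Haagerup constants by the Steinhaus constants $\widetilde{A_p}$ as in \cite{ap2}. One slip: the Khinchine--Steinhaus step contributes a factor $(\widetilde{A_{q_m}})^{-m}$, not $(\widetilde{A_{q_m}})^{m}$ (since $\widetilde{A_p}\le 1$ for $p\le 2$ and one divides by it to pass from the random sum to the $\ell^2$-norm); with this correction your bound reads $C_{m,t}^{\mathbb{C}}/(\widetilde{A_{q_m}})^{n/2}$, which indeed matches the recursion.
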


\begin{theorem}
[The Fundamental Lemma - continuum version complex]For each $t\in\lbrack1,2),$
there is a sequence satisfying \eqref{desia} and so that $\left(
\widetilde{R}_{n+1,t}-\widetilde{R}_{n,t}\right)  _{n=1}^{\infty}$ is
decreasing and converges to zero. Moreover
\[
\widetilde{R}_{n+1,t}-\widetilde{R}_{n,t}\leq\left(  \frac{2\left(\Gamma\left(  \frac{t+2}{2}\right)  \right)  ^{\frac{-1}{t}}\left(  e^{\left(
\frac{1}{4t}\left(  \gamma-1\right)  \left(  2t-4\right)  \right)  }-1\right)}{e^{\left(  \frac{1}{4t}\left(  \gamma-1\right)  \left(  2t-4\right)
\right)  }}\right)  n^{\log_{2}\left(  \frac{e^{\left(  \frac{1}{4t}\left(\gamma-1\right)  \left(  2t-4\right)  \right)  }}{2}\right)}
\]
for every $n \in \mathbb{N}$.
\end{theorem}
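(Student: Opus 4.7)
The plan is to transcribe the proof of Theorem \ref{t4} (and its real continuum analogue \ref{t4c}) into the complex setting, using the constants $\widetilde{A}_p = \Gamma((p+2)/2)^{1/p}$ and the recursion of Theorem \ref{Comp}. Set $\widetilde{D}_t := e^{(\gamma-1)(2t-4)/(4t)} = e^{(2-t)(1-\gamma)/(2t)}$; note that for $t\in[1,2)$ one has $1 \le \widetilde{D}_t \le e^{(1-\gamma)/2} < \sqrt{2} < 2$, which will be the key inequality downstream.

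\textbf{Step 1 (Gamma-function monotonicity).} First I would show that the sequence
$$\left(\widetilde{A}_{\frac{2nt}{(n-2)t+4}}^{-n/2}\right)_{n\geq 1}$$
is increasing and converges to $\widetilde{D}_t$, by applying Qi's theorem (\cite{qi}) to $(\Gamma(s)/\Gamma(r))^{1/(s-r)}$ with the analogue of the choice $s=\tfrac{2n}{n+2}$, $r=2$ used in Section \ref{complex} for $t=1$. Writing $p_n := \tfrac{2nt}{(n-2)t+4}$, one checks $\tfrac{p_n+2}{2} = \tfrac{(2n-2)t+4}{(n-2)t+4} \to 2$ and $\tfrac{n}{2p_n} = \tfrac{(n-2)t+4}{4t}$; the Taylor expansion $\Gamma(2+\varepsilon)=1+(1-\gamma)\varepsilon+O(\varepsilon^2)$ then yields the asserted limit $e^{(1-\gamma)(2-t)/(2t)} = \widetilde{D}_t$. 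A parallel computation, as in Section \ref{w3}, bounds the odd‐term coefficient $\bigl(\widetilde{A}_{\frac{2(n-1)t}{(n-3)t+4}}^{-(n+1)/2}\bigr)^{(n-1)/(2n)} \bigl(\widetilde{A}_{\frac{2(n+1)t}{(n-1)t+4}}^{-(n-1)/2}\bigr)^{(n+1)/(2n)}$ by $\widetilde{D}_t$. Consequently the sequence
$$\widetilde{S}_{n,t} = \begin{cases} \Gamma((t+2)/2)^{-(n-1)/t} & n=1,2,\\ \widetilde{D}_t\,\widetilde{S}_{n/2,t} & n \text{ even},\\ \widetilde{D}_t\,(\widetilde{S}_{(n-1)/2,t})^{(n-1)/(2n)}(\widetilde{S}_{(n+1)/2,t})^{(n+1)/(2n)} & n \text{ odd} \end{cases}$$
is increasing and dominates $C_{n,t}^{\mathbb{C}}$, hence satisfies \eqref{desia}.

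\textbf{Step 2 (binary skeleton and perturbation).} Replace $\widetilde{S}_{n,t}$ by the dominating skeleton $\widetilde{M}_{n,t}$ defined like $\widetilde{M}_{n/2,t}$ or $\widetilde{M}_{(n+1)/2,t}$ multiplied by $\widetilde{D}_t$; as in Section \ref{complex}, one verifies $\widetilde{M}_{n,t} = \widetilde{A}_t^{-1}\,\widetilde{D}_t^{\,k-1}$ whenever $n\in B_k=\{2^{k-1}+1,\dots,2^k\}$. This skeleton is piecewise constant, so $\widetilde{M}_{n+1,t}-\widetilde{M}_{n,t}$ has no limit; I remove this pathology via the uniform perturbation
$$\widetilde{R}_{n,t} := \widetilde{A}_t^{-1}\left(\widetilde{D}_t^{\,k-1} + (j_n-1)\,\frac{\widetilde{D}_t^{\,k}-\widetilde{D}_t^{\,k-1}}{2^{k-1}}\right), \qquad n\in B_k,$$
where $j_n$ is the position of $n$ within $B_k$. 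Since $\widetilde{M}_{n,t}\le \widetilde{R}_{n,t}$, the perturbed sequence still satisfies \eqref{desia}.

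\textbf{Step 3 (decay estimate).} Copying the case analysis in the proof of Theorem \ref{t4}: whether $n+1\in B_k$ or $n+1\in B_{k+1}$, one computes
$$\widetilde{R}_{n+1,t}-\widetilde{R}_{n,t} \;=\; \widetilde{A}_t^{-1}\,\frac{\widetilde{D}_t^{\,k}-\widetilde{D}_t^{\,k-1}}{2^{k-1}} \;=\; \widetilde{A}_t^{-1}(\widetilde{D}_t-1)\left(\tfrac{\widetilde{D}_t}{2}\right)^{k-1}.$$
Because $\widetilde{D}_t<2$, this expression is monotone decreasing in $k$ and tends to $0$. For $n\in B_k$ we have $k-1\ge \log_2(n/2)$, so
$$\widetilde{R}_{n+1,t}-\widetilde{R}_{n,t}\le \frac{2\widetilde{A}_t^{-1}(\widetilde{D}_t-1)}{\widetilde{D}_t}\,n^{\log_2(\widetilde{D}_t/2)}.$$
Substituting $\widetilde{A}_t = \Gamma((t+2)/2)^{1/t}$ and the formula for $\widetilde{D}_t$ produces exactly the displayed bound.

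The main obstacle is Step 1: verifying that Qi's monotonicity theorem applies uniformly in $t\in[1,2)$ with the right choice of parameters and, crucially, that the limit of the ratio sequence really is $\widetilde{D}_t$ — this is the single point at which the Euler--Mascheroni constant enters, and the rest of the argument is essentially mechanical transcription from the real case.
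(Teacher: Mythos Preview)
Your proposal is correct and follows exactly the template the paper uses: the Appendix states this result without proof precisely because it is the mechanical transcription you describe of Theorem \ref{t4} and Section \ref{complex} to general $t\in[1,2)$, via Qi's monotonicity, the skeleton $\widetilde{M}_{n,t}$, and the uniform perturbation $\widetilde{R}_{n,t}$. Your identification of $\widetilde{D}_t$, of the initial value $\widetilde{A}_t^{-1}=\Gamma((t+2)/2)^{-1/t}$, and of the final constant via $(\widetilde{D}_t/2)^{k-1}\le (2/\widetilde{D}_t)\,n^{\log_2(\widetilde{D}_t/2)}$ all match the paper's computations in the $t=1$ case.
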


\begin{theorem}
For each $t\in\lbrack1,2),$ let $\left(  K_{n,t}^{\mathbb{C}}\right)  _{n=1}^{\infty}$ be the sequence of the optimal constants satisfying \eqref{desia}. If there is a constant $M_{t}\in\lbrack-\infty,\infty]$ so that
\[
\lim_{n\rightarrow\infty}\left(  K_{n+1,t}^{\mathbb{C}}-K_{n,t}^{\mathbb{C}}\right)  =M_{t}
\]
then $M_{t}=0$.
\end{theorem}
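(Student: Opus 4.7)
The plan is to transfer the argument of Theorem \ref{56a} to the complex setting, using the continuum complex Fundamental Lemma stated immediately before this theorem (which yields a sequence $(\widetilde{R}_{n,t})_{n=1}^{\infty}$ satisfying \eqref{desia} with $\widetilde{R}_{n+1,t}-\widetilde{R}_{n,t}\to 0$) in place of its real counterpart. The skeleton of the argument is identical to the proof of Theorem \ref{56}; only the ingredients change.

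First I would dispose of the cases $M_{t}\in[-\infty,0)$. Since the inequality \eqref{desia} must hold in particular for the trivial $n$-linear form, one has $K_{n,t}^{\mathbb{C}}\geq 1$ for every $n$. If $M_{t}<0$, pick $n_{0}$ with $K_{n+1,t}^{\mathbb{C}}-K_{n,t}^{\mathbb{C}}<M_{t}/2$ for all $n\geq n_{0}$; telescoping gives $K_{N,t}^{\mathbb{C}}<K_{n_{0},t}^{\mathbb{C}}+(M_{t}/2)(N-n_{0})\to-\infty$, contradicting $K_{N,t}^{\mathbb{C}}\geq 1$. The case $M_{t}=-\infty$ is handled in exactly the same way.

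Next, suppose $M_{t}\in(0,\infty)$. Since $(K_{n,t}^{\mathbb{C}})_{n=1}^{\infty}$ is the \emph{optimal} sequence and $(\widetilde{R}_{n,t})_{n=1}^{\infty}$ satisfies \eqref{desia}, we have $K_{n,t}^{\mathbb{C}}\leq \widetilde{R}_{n,t}$ for every $n$. Choose $n_{0}$ with $K_{n+1,t}^{\mathbb{C}}-K_{n,t}^{\mathbb{C}}>M_{t}/2$ for $n\geq n_{0}$, and $n_{1}$ with $\widetilde{R}_{n+1,t}-\widetilde{R}_{n,t}<M_{t}/4$ for $n\geq n_{1}$. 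Setting $n_{2}:=\max\{n_{0},n_{1}\}$, telescoping yields
\[
K_{N,t}^{\mathbb{C}}-K_{n_{2},t}^{\mathbb{C}}>\frac{M_{t}}{2}(N-n_{2}) \quad\text{and}\quad \widetilde{R}_{N,t}-\widetilde{R}_{n_{2},t}<\frac{M_{t}}{4}(N-n_{2}).
\]
Choosing $N$ large enough that $(M_{t}/4)(N-n_{2})$ exceeds $\widetilde{R}_{n_{2},t}-K_{n_{2},t}^{\mathbb{C}}$, one obtains $K_{N,t}^{\mathbb{C}}>\widetilde{R}_{N,t}$, contradicting the domination above. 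The case $M_{t}=+\infty$ is a trivial modification: replace $M_{t}/2,M_{t}/4$ by any two constants with the larger strictly greater than the smaller.

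The argument itself is soft; the real work has been absorbed into the complex continuum Fundamental Lemma preceding this statement, where the delicate uniform-perturbation construction (and the verification that $(\widetilde{R}_{n+1,t}-\widetilde{R}_{n,t})_{n=1}^{\infty}$ really does tend to zero) lives. The only point where care is needed is the comparison $K_{n,t}^{\mathbb{C}}\leq\widetilde{R}_{n,t}$, but this is immediate from the definition of $K_{n,t}^{\mathbb{C}}$ as the infimum of admissible constants in \eqref{desia} together with the admissibility of $(\widetilde{R}_{n,t})$ furnished by the preceding lemma. With those two ingredients in hand, the contradiction between the strict linear lower bound on $K_{N,t}^{\mathbb{C}}-K_{n_{2},t}^{\mathbb{C}}$ and the strictly smaller linear upper bound on $\widetilde{R}_{N,t}-\widetilde{R}_{n_{2},t}$ closes the argument in full parallel with Theorem \ref{56a}.
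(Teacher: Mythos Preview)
Your proposal is correct and follows essentially the same approach as the paper: the Appendix states this result without proof precisely because it is the verbatim transcription of the proof of Theorem~\ref{56} with $(\widetilde{R}_{n,t})$ in place of $(R_n)$, which is exactly what you do. One cosmetic remark: the \emph{zero} $n$-linear form does not force $K_{n,t}^{\mathbb{C}}\geq 1$ (both sides vanish); use instead a rank-one form such as $U(z_1,\ldots,z_n)=z_1^{(1)}\cdots z_n^{(1)}$, for which both sides of \eqref{desia} equal $1$.
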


\begin{theorem}
For each $t\in\lbrack1,2),$ let $\left(  K_{n,t}^{\mathbb{C}}\right)  _{n=1}^{\infty}$ be the sequence of the optimal constants satisfying \eqref{desia}. For any $\varepsilon>0$, we have
\[
K_{n+1,t}^{\mathbb{C}}-K_{n,t}^{\mathbb{C}}<\left(  \frac{2\left(  \Gamma\left(  \frac{t+2}{2}\right)  \right)^{\frac{-1}{t}}\left(  e^{\left(  \frac{1}{4t}\left(  \gamma-1\right)  \left(
2t-4\right)  \right)  }-1\right)  }{e^{\left(  \frac{1}{4t}\left(\gamma-1\right)  \left(  2t-4\right)  \right)  }}\right)  n^{\log_{2}\left(\frac{e^{\left(  \frac{1}{4t}\left(  \gamma-1\right)  \left(  2t-4\right)\right)  }}{2}\right)  +\varepsilon}
\]
for infinitely many $n \in \mathbb{N}$.
\end{theorem}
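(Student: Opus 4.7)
The plan is to replicate, in its logical structure, the proof of Theorem~\ref{uty}, replacing the real-scalar Fundamental Lemma by its continuum complex counterpart stated just above. Set
\[
a_t := \tfrac{1}{4t}(\gamma-1)(2t-4), \qquad
\alpha_t := \log_{2}\!\Bigl(\tfrac{e^{a_t}}{2}\Bigr), \qquad
C_t := \tfrac{2\bigl(\Gamma(\tfrac{t+2}{2})\bigr)^{-1/t}(e^{a_t}-1)}{e^{a_t}},
\]
so that the continuum complex Fundamental Lemma delivers a sequence $(\widetilde{R}_{n,t})$ satisfying \eqref{desia} with $\widetilde{R}_{n+1,t}-\widetilde{R}_{n,t}\leq C_t\, n^{\alpha_t}$. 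Because $(\widetilde{R}_{n,t})$ satisfies \eqref{desia}, one automatically has $K_{n,t}^{\mathbb{C}}\leq \widetilde{R}_{n,t}$ for every $n$.

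First I would telescope to obtain $\widetilde{R}_{n,t}\leq 1+C_t\sum_{j=1}^{n-1}j^{\alpha_t}$, and then, for a fixed $\varepsilon>0$, introduce the comparison sequence $T_{n,t}:=1+C_t\sum_{j=1}^{n-1}j^{\alpha_t+\varepsilon}$ whose consecutive difference $T_{n+1,t}-T_{n,t}=C_t\, n^{\alpha_t+\varepsilon}$ is precisely the right-hand side of the claimed inequality. I would then argue by contradiction as in Theorem~\ref{uty}: assuming the set $A_\varepsilon:=\bigl\{n:K_{n+1,t}^{\mathbb{C}}-K_{n,t}^{\mathbb{C}}<T_{n+1,t}-T_{n,t}\bigr\}$ is finite and letting $n_\varepsilon$ exceed every element of $A_\varepsilon$, summation of the reverse inequality from $n=n_\varepsilon+1$ up to $N$ gives
\[
K_{N+1,t}^{\mathbb{C}}-T_{N+1,t}\ \geq\ K_{n_\varepsilon+1,t}^{\mathbb{C}}-T_{n_\varepsilon+1,t},
\]
a fixed real number independent of $N$.

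On the other hand, using $K_{N+1,t}^{\mathbb{C}}\leq\widetilde{R}_{N+1,t}$ together with the explicit expressions for $\widetilde{R}_{N+1,t}$ and $T_{N+1,t}$,
\[
K_{N+1,t}^{\mathbb{C}}-T_{N+1,t}\ \leq\ C_t\sum_{j=1}^{N}\bigl(j^{\alpha_t}-j^{\alpha_t+\varepsilon}\bigr),
\]
and this right-hand side tends to $-\infty$ provided the series $\sum j^{\alpha_t+\varepsilon}$ diverges; contradicting the lower bound above and thereby forcing $A_\varepsilon$ to be infinite, which is the claim.

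The main obstacle---and the only part that does not transcribe verbatim from the real case---is verifying that $\alpha_t>-1$ uniformly on $[1,2)$, so that $\alpha_t+\varepsilon>-1$ for every $\varepsilon>0$ and the divergence needed in the last step is actually available. This is in fact a short sign computation: for $t\in[1,2)$ one has $\gamma-1<0$ and $2t-4<0$, so $a_t>0$, hence $\log_2(e^{a_t})>0$, and therefore $\alpha_t>-1$. With this monotonicity information in hand (analogous in spirit to the role played by Qi's Gamma-function theorem in Section~\ref{p0}), the rest of the argument is a straightforward telescoping/perturbation in the style of Theorem~\ref{uty}.
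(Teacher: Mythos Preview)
Your proposal is correct and follows exactly the approach the paper intends: the Appendix results are stated without proof as direct adaptations of the earlier arguments, and your write-up transcribes the proof of Theorem~\ref{uty} to the complex continuum setting via the complex Fundamental Lemma, just as the paper would. Your explicit verification that $\alpha_t>-1$ (needed for the divergence $\sum j^{\alpha_t+\varepsilon}\to\infty$) is a welcome addition---the paper tacitly uses the analogous fact in the proof of Theorem~\ref{uty} without comment---though the parenthetical linking this to Qi's theorem is a bit misplaced: Qi's result is used upstream to establish the Fundamental Lemma itself, whereas the inequality $\alpha_t>-1$ is, as you show, a one-line sign check.
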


\begin{theorem}
For each $t\in\lbrack1,2)$, the optimal constants satisfying \eqref{desia} are so that
\[
K_{n,t}^{\mathbb{C}}<1+\frac{2\left(  \Gamma\left(  \frac{t+2}{2}\right)  \right)  ^{\frac{-1}{t}}\left(  e^{\left(  \frac{1}{4t}\left(  \gamma-1\right)  \left(
2t-4\right)  \right)  }-1\right)  }{e^{\left(  \frac{1}{4t}\left(\gamma-1\right)  \left(  2t-4\right)  \right)  }}{\textstyle\sum
\limits_{j=1}^{n-1}}j^{\log_{2}\frac{e^{\left(  \frac{1}{4t}\left(\gamma-1\right)  \left(  2t-4\right)  \right)}}{2}}.
\]
for every $n\geq2$.
\end{theorem}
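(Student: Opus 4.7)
The plan is to deduce this bound by telescoping the already-established differences $\widetilde{R}_{n+1,t} - \widetilde{R}_{n,t}$ from the complex continuum Fundamental Lemma, exactly mirroring the argument used in Section \ref{occ} for the real case (proof of Theorem \ref{tt55}). The key point is that the sequence $(\widetilde{R}_{n,t})_{n=1}^{\infty}$ appearing there satisfies the multilinear Bohnenblust--Hille-type inequality \eqref{desia} and its initial term equals $1$.

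First, I would invoke the complex continuum Fundamental Lemma to get, for every positive integer $n$,
\[
\widetilde{R}_{n+1,t} - \widetilde{R}_{n,t} \leq \left(\frac{2\left(\Gamma\!\left(\tfrac{t+2}{2}\right)\right)^{-1/t}\left(e^{\frac{1}{4t}(\gamma-1)(2t-4)}-1\right)}{e^{\frac{1}{4t}(\gamma-1)(2t-4)}}\right) n^{\log_2\!\left(\frac{e^{\frac{1}{4t}(\gamma-1)(2t-4)}}{2}\right)}.
\]
Next I would check the base case $\widetilde{R}_{1,t}=1$ (this is inherited from the construction of $\widetilde{R}_{n,t}$ as a perturbation of $\widetilde{M}_{n,t}$, whose first term comes from $(2/\sqrt{\pi})^{0}=1$, matching the $n=1$ initialization of $\widetilde{C}_{n,t}$ in Theorem \ref{Comp}). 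Then, since \eqref{desia} gives $K_{n,t}^{\mathbb{C}} \leq \widetilde{R}_{n,t}$, I telescope
\[
K_{n,t}^{\mathbb{C}} \leq \widetilde{R}_{n,t} = \widetilde{R}_{1,t} + \sum_{j=1}^{n-1}\bigl(\widetilde{R}_{j+1,t}-\widetilde{R}_{j,t}\bigr),
\]
and substitute the bound above term-by-term.

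The strict inequality $<$ (as opposed to $\leq$) comes from the strict inequality $K_{n,t}^{\mathbb{C}}<\widetilde{R}_{n,t}$, which itself follows from the fact that $\widetilde{R}_{n,t}$ is a strict perturbation of the sequence $\widetilde{M}_{n,t}$ (and already $\widetilde{C}_{n,t}\leq \widetilde{S}_{n,t}\leq \widetilde{M}_{n,t}\leq \widetilde{R}_{n,t}$ with at least one of the inequalities being strict for $n\geq 2$, so the gap propagates). This is the same mechanism used to upgrade the real-case estimate in Theorem \ref{tt55}.

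The main obstacle, as in the real case, is not the telescoping itself (which is essentially a one-line calculation) but rather justifying the continuum Fundamental Lemma that feeds into it; in particular, one must verify: (i) the monotonicity of the continuum analog of $\bigl(\widetilde{A}_{\frac{2nt}{(n-2)t+4}}^{-n/2}\bigr)_{n}$, which again reduces to the Qi ratio-monotonicity theorem \cite[Theorem 2]{qi} (applied with the appropriate $r,s$ depending on $t$), and (ii) the uniformity of the estimate of $\widetilde{D}_{t}:=e^{\frac{1}{4t}(\gamma-1)(2t-4)}<2$, which is needed to guarantee that the geometric factor $(\widetilde{D}_{t}/2)^{k-1}$ drives the differences to zero. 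Once those pieces are in place (and they parallel the real-scalar development of Sections \ref{p0} and \ref{w4}), the present theorem is a direct corollary of summing a single-variable geometric-type bound, and requires no further work.
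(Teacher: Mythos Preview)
Your proposal is correct and follows exactly the approach the paper intends: the theorem is stated in the Appendix as the continuum complex analog of Theorem \ref{tt55}, and just as there (via \eqref{iiu} and its complex analog \eqref{iiu2}), one telescopes the difference bound from the continuum complex Fundamental Lemma starting from $\widetilde{R}_{1,t}=1$ and uses $K_{n,t}^{\mathbb{C}}\leq\widetilde{R}_{n,t}$. Your identification of the base value and of the two ingredients (Qi's monotonicity and $\widetilde{D}_t<2$) needed upstream is also accurate; the only cosmetic remark is that for general $t$ the first term comes from $\bigl(\Gamma(\tfrac{t+2}{2})^{-1/t}\bigr)^{0}=1$ rather than $(2/\sqrt{\pi})^{0}$, which is the special case $t=1$.
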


\begin{corollary}
For each $t\in\lbrack1,2)$, the optimal constants satisfying \eqref{desia} are so that
\[
K_{n,t}^{\mathbb{C}}<c^{\prime}\left(  t\right)  \left(  n-1\right)^{r^{\prime}(t)}+p^{\prime}\left(  t\right)  ,
\]
where (see Figure \ref{pcrx}):

\begin{itemize}
\item[] $\displaystyle p^{\prime}\left(  t\right)  =1+\frac{\left(  \frac{-2}{\log_{2}e^{\left(
\frac{1}{4t}\left(  \gamma-1\right)  \left(  2t-4\right)  \right)  }}+\frac
{2}{e^{\left(  \frac{1}{4t}\left(  \gamma-1\right)  \left(  2t-4\right)
\right)  }}+1\right)  }{\left(  \Gamma\left(  \frac{t+2}{2}\right)  \right)
^{\frac{1}{t}}\left(  e^{\left(  \frac{\left(  \gamma-1\right)  \left(
2t-4\right)  }{4t}\right)  }-1\right)  ^{-1}}$,
\vspace{.25cm}
\item[] $\displaystyle c^{\prime}\left(  t\right)  =\frac{2\left(  \Gamma\left(  \frac{t+2}{2}\right)  \right)  ^{\frac{-1}{t}}\left(  e^{\left(  \frac{\left(\gamma-1\right)  \left(  2t-4\right)  }{4t}\right)  }-1\right)  }{\left(\log_{2}e^{\left(  \frac{\left(  \gamma-1\right)  \left(  2t-4\right)  }{4t}\right)  }\right)  e^{\left(  \frac{\left(  \gamma-1\right)  \left(2t-4\right)  }{4t}\right)  }}$, and
\vspace{.25cm}
\item[] $\displaystyle r^{\prime}(t)=\log_{2}e^{\left(  \frac{\left(  \gamma-1\right)  \left(2t-4\right)  }{4t}\right)}$.
\end{itemize}
\end{corollary}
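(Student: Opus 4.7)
The plan is to mirror the real-case argument in Corollary \ref{nnhh} essentially verbatim, using the summation bound furnished by the theorem immediately preceding this corollary. Write for brevity $E_t := \tfrac{(\gamma-1)(2t-4)}{4t}$ and
\[
P_t := \frac{2\bigl(\Gamma(\tfrac{t+2}{2})\bigr)^{-1/t}\bigl(e^{E_t}-1\bigr)}{e^{E_t}},\qquad \alpha_t := \log_2\!\Bigl(\tfrac{e^{E_t}}{2}\Bigr),
\]
so that the preceding theorem reads $K_{n,t}^{\mathbb{C}}<1+P_t\sum_{j=1}^{n-1}j^{\alpha_t}$. Set $p:=-\alpha_t$ so that the sum is $\sum_{j=1}^{n-1} j^{-p}$, and bound it by an integral exactly as in the proof of Corollary \ref{nnhh}.

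First I would verify $0<p<1$, equivalently $1<e^{E_t}<2$. For $t\in[1,2)$, both factors $\gamma-1$ and $2t-4$ are negative, so $E_t>0$, giving $e^{E_t}>1$. The function $E_t$ is maximized at $t=1$, where $e^{E_1}=e^{(1-\gamma)/2}\approx1.24<2$. Hence $0<\log_2 e^{E_t}<1$ and $0<p<1$, so in particular $1-p\neq 0$ and the integral
\[
\sum_{j=1}^{n-1}j^{-p}\leq \int_{2}^{n-1}x^{-p}\,dx+\bigl(1+2^{-p}\bigr)=\frac{(n-1)^{1-p}}{1-p}-\frac{2^{1-p}}{1-p}+1+2^{-p}
\]
(valid verbatim from the real case) is well-defined and finite.

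Substituting this estimate into $K_{n,t}^{\mathbb{C}}<1+P_t\sum_{j=1}^{n-1}j^{-p}$ produces a bound of the form $c'(t)(n-1)^{r'(t)}+p'(t)$. The exponent is simply $r'(t)=1-p=\log_2 e^{E_t}$, matching the stated formula. The coefficient is $c'(t)=P_t/(1-p)$, and substituting the values of $P_t$ and $1-p$ gives the claimed expression for $c'(t)$. For the constant term, the identities $2^{1-p}=2^{\log_2 e^{E_t}}=e^{E_t}$ and $2^{-p}=e^{E_t}/2$ convert $P_t\bigl(-\frac{2^{1-p}}{1-p}+1+2^{-p}\bigr)$ into $P_t\bigl(-\frac{e^{E_t}}{\log_2 e^{E_t}}+1+\frac{e^{E_t}}{2}\bigr)$, and adding the leading $1$ yields $p'(t)$.

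The only substantive step is the algebraic repackaging of the constant term to match the form displayed in the statement. The trick is to factor out $\tfrac{e^{E_t}}{2}$ inside the parenthesis, writing
\[
-\frac{e^{E_t}}{\log_2 e^{E_t}}+1+\frac{e^{E_t}}{2}=\frac{e^{E_t}}{2}\!\left(-\frac{2}{\log_2 e^{E_t}}+\frac{2}{e^{E_t}}+1\right),
\]
whereupon $P_t\cdot\tfrac{e^{E_t}}{2}=\bigl(\Gamma(\tfrac{t+2}{2})\bigr)^{-1/t}(e^{E_t}-1)$, exactly producing the reciprocal $\bigl(\Gamma(\tfrac{t+2}{2})\bigr)^{1/t}(e^{E_t}-1)^{-1}$ in the denominator of the stated $p'(t)$. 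No non-trivial estimate is required beyond the monotone integral bound; everything else is bookkeeping, and the only mild subtlety is ensuring that the integral bound is applied in the regime $0<p<1$, which was verified above.
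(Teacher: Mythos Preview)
Your proposal is correct and follows exactly the route the paper intends: the corollary in the Appendix is stated without its own proof, but it is the complex continuum analogue of Corollary~\ref{nnhh}, and your argument reproduces that proof verbatim with the appropriate constants $P_t$, $E_t$, $\alpha_t$ substituted in. The added verification that $0<p<1$ (so that the integral bound is legitimate and $1-p\neq 0$) is a point the paper leaves implicit, and your algebraic repackaging of the constant term to match the displayed $p'(t)$ is precisely the simple calculation alluded to at the end of the proof of Corollary~\ref{nnhh}.
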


\begin{figure}
\centering
\begin{tabular}{cc}
\includegraphics[width=0.4\textwidth]{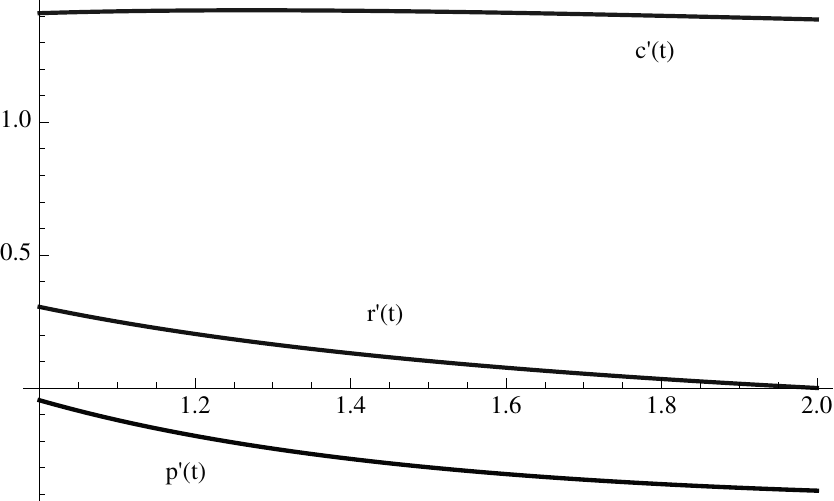} & \includegraphics[width=0.4\textwidth]{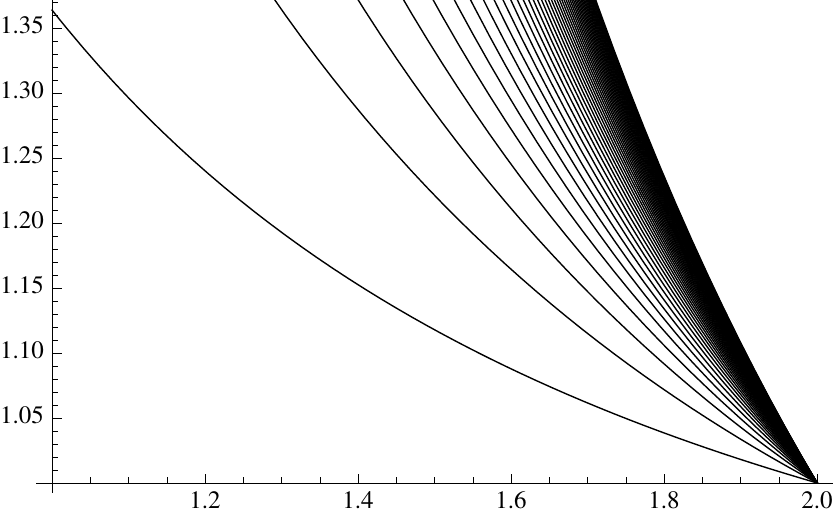}\\
(a) Plots of $p'(t), c'(t)$, and $r'(t)$ for $t \in [1,2)$. & (b) $K_{n,t}^{\mathbb{C}}$ for $2 \le n \le 50$.
\end{tabular}
\caption{}\label{pcrx}
\end{figure}

\medskip
\noindent {\bf Acknowledgements.} The authors thank Diogo Diniz for fruitful conversations on the topic of this paper.


\end{document}